\newtheorem{thm}{Theorem}[section]
\newtheorem{cor}[thm]{Corollary}
\newtheorem{lm}[thm]{Lemma}
\newtheorem{clm}[thm]{Claim}
\newtheorem*{clm*}{Claim}
\theoremstyle{definition}
\newtheorem{df}[thm]{Definition}
\newtheorem{exmp}[thm]{Example}
\newtheorem{prb}[thm]{Problem}
\newtheorem{glassumption}[thm]{Global Assumption}
\numberwithin{equation}{section}
\newcommand{\al}[1]{\mathbf{#1}}
\DeclareMathOperator{\id}{\textrm{id}}     
\newcommand{\var}[1]{{\mathcal{\uppercase{#1}}}}
\newcommand{\restr}{{\restriction}}
\newcommand{\LL}{\ensuremath{\mathcal{L}}\xspace}
\newcommand{\MM}{\ensuremath{\mathcal{M}}\xspace}
\newcommand{\RR}{\ensuremath{\mathcal{R}}\xspace}
\newcommand{\CC}{\ensuremath{\mathcal{C}}\xspace}
\newcommand{\SD}{\ensuremath{\mathcal{SD}}\xspace}
\newcommand{\RRn}{\RR_{[n]}}
\newcommand{\LT}{\mathsf{LT}}
\newcommand{\EE}{\mathsf{E}}
\newcommand{\ModA}{\text{\rm Mod}_A}
\newcommand{\arity}{\text{\rm arity}}
\newcommand{\cross}[1]{\mathsf{X}_{#1}}
\newcommand{\Prn}{{\textstyle\Pr_{[n]}}}
\newcommand{\PrA}{{\textstyle\Pr_A}}
\newcommand{\PrB}{{\textstyle\Pr_B}}
\let\phi=\varphi
\let\epsilon=\varepsilon
\let\bar=\overline
\DeclareMathOperator{\Mod}{Mod}
\newcommand{\qedsymbdiamond}{\renewcommand{\qedsymbol}{$\diamond$}}
\newcommand{\qeddiamond}{\qedsymbdiamond\qed}
\begin{document}

\title[Random Models]{Random Models of\\
  Idempotent Linear Maltsev Conditions.\\
  I. Idemprimality}

\author{Clifford Bergman}
\address[Clifford Bergman]{Department of Mathematics\\
Iowa State University\\
Ames, Iowa 50011\\
USA}
\email{cbergman@iastate.edu}

\author{\'Agnes Szendrei}
\address[\'Agnes Szendrei]{Department of Mathematics\\
University of Colorado\\
Boulder, CO 80309-0395\\
USA}
\email{Szendrei@Colorado.EDU}

\thanks{This material is based upon work supported by the
  National Science Foundation grants
  no.~DMS~1500218 and DMS~1500254. The second author
  acknowledges the support of 
  the Hungarian National Foundation for Scientific Research (OTKA)
  grant no.~K115518.} 
\keywords{Maltsev condition, idemprimal, Murski\v \i}
\subjclass[2010]{08B05, 08A40}
\date{\today}

\begin{abstract}
We extend a well-known theorem of Murski\v{\i} to the probability space of
finite models of a system $\MM$ of identities of a strong idempotent
linear Maltsev condition. We characterize the models of $\MM$ in a way that
can be easily turned into an algorithm for producing random finite models of
$\MM$, and we prove that
under mild restrictions on $\MM$, a random finite model of $\MM$ is almost
surely idemprimal. This implies that even if such an $\MM$ is distinguishable
from another idempotent linear
Maltsev condition by a finite model $\al{A}$ of $\MM$,
a random search for a finite model $\al{A}$ of $\MM$ with this property
will almost surely fail.
\end{abstract}

\maketitle

\section{Introduction}
\label{sec-intro}
This investigation arose from efforts by the first author
to construct random finite algebras that generate a variety
that is congruence $3$-permutable, but not $2$-permutable.
On the face of it, it ought to be easy to create such varieties.
The Hagemann--Mitschke terms provide a recipe for constructing a
$3$-permutable variety. By choosing the remaining values of those
operations randomly, one would expect that the
resulting variety would fail to satisfy any stronger identities (such as
$2$-permutability).  

It turns out that this is not the case: a random, finite, $3$-permutable
algebra almost surely generates a $2$-permutable variety.
Similar relationships can be sought among other Maltsev conditions.
If a random
finite algebra has a Maltsev term, will it have (with probability~1)
a majority term? Will a random finite algebra lying in a
congruence semidistributive variety almost surely generate one that is
congruence distributive?  

Now we want to make these questions and claims more precise.
As we will discuss below, there is a range of possible interpretations.
Our aim with the discussion is to clarify and motivate
our interpretation.
First recall that a strong Maltsev condition is a
condition of the form
\begin{equation}
  \label{eq-maltsevcond}
\text{``there exist terms $f_1,\dots,f_m$ which satisfy the identities in $\Sigma$''}
\end{equation}
where $\LL=\{f_1,\dots,f_m\}$ is a finite algebraic language and
$\Sigma$ is a finite set of $\LL$-identities.
We will refer to \eqref{eq-maltsevcond} as the Maltsev condition defined by
the pair $\MM=(\LL,\Sigma)$, and will denote it by $\CC_\MM$.
A variety $\var{V}$ satisfies the Maltsev condition $\CC_\MM$ in
\eqref{eq-maltsevcond} if and only if
there exist terms $f_1,\dots,f_m$ in the language of $\var{V}$ such that
for every member $\al{C}$ of $\var{V}$, the $\LL$-algebra
$(C;f_1^{\al{C}},\dots,f_m^{\al{C}})$ satisfies the identities in $\Sigma$.
Similarly, an algebra $\al{A}$ satisfies $\CC_\MM$ if and only if
there exist terms $f_1,\dots,f_m$ in the language of $\al{A}$ such that
the $\LL$-algebra 
$(A;f_1^{\al{A}},\dots,f_m^{\al{A}})$ satisfies the identities in $\Sigma$.
It is easy to see that $\al{A}$ satisfies $\CC_\MM$ if and only if the variety
generated by $\al{A}$ does.

A number of important properties of a variety are known to be characterized
by strong Maltsev conditions, including congruence permutability,
arithmeticity, and ``having a Taylor term''.
A rich supply of strong Maltsev conditions arise also in parametrized sequences
which define Maltsev conditions characterizing properties of varieties like
congruence distributivity and congruence modularity.
For further examples, see Section~\ref{sec-appl}. 
In most cases, the strong Maltsev conditions obtained in this way are
both linear and idempotent. 
Therefore,  we will assume throughout this paper that
$\MM$ is linear and idempotent (see the definitions in
Section~\ref{sec-il_ids}).
To avoid some degenerate cases, we will also assume
that $\MM$ is satisfiable (i.e., satisfiable by some algebra of size${}>1$).

Given a finite set $A$ and a finite algebraic language $\bar{\LL}$,
a random finite $\bar{\LL}$-algebra $\al{A}$ with universe $A$
can be obtained by
randomly filling out the tables of all operations $g^{\al{A}}$ interpreting
the symbols $g$ in $\bar{\LL}$. This way we get a discrete probability space
on the set of all $\bar{\LL}$-algebras with universe $A$, where all algebras
have the same probability.
For an abstract property $P$ of algebras (i.e., for a property that depends
only on the isomorphism types of algebras) we define the
\emph{probability that a finite $\bar{\LL}$-algebra has property $P$}
to be the limit, as $n\to\infty$, of the probability that
an $n$-element $\bar{\LL}$-algebra has property $P$
(see Section~\ref{sec-rmodels}).
This is the approach called \emph{labeled probability} by Freese~\cite{freese}.

Now let's return to the sort of questions alluded to in the opening paragraphs.
They have the form
\begin{enumerate}
\item[$(\dagger)$]
  ``How likely is it that
  a random finite algebra which satisfies the Maltsev condition
  $\CC_{\MM}$ will also satisfy [will fail to satisfy]
  the Maltsev condition $\CC_{\MM'}$?''
\end{enumerate}  
where $\MM$ and $\MM'$ are two systems of identities which are 
linear, idempotent, and satisfiable.

Question~$(\dagger)$, in its most straightforward interpretation, considers all random
finite algebras $\al{A}$ (in a specified finite language $\bar{\LL}$) which satisfy
$\CC_{\MM}$, and asks for the probability, in the sense discussed above, that such an
$\al{A}$ satisfies [fails to satisfy] $\CC_{\MM'}$.  By Murski\v{\i}'s
Theorem~\cite{murskii}, if $\bar{\LL}$ contains a symbol of arity${}\ge2$, then a random
finite $\bar{\LL}$-algebra is almost surely (i.e., with probability $1$)
\emph{idemprimal}.  Since the defining property of an idemprimal algebra is that its term
operations include all idempotent operations on its universe, such an algebra satisfies
every satisfiable strong, idempotent, linear Maltsev condition (see
Corollary~\ref{cor-models_exist}).  Thus, we get that a random finite algebra which
satisfies $\CC_{\MM}$ will satisfy $\CC_{\MM'}$ with probability $1$ [and hence will fail
to satisfy $\CC_{\MM'}$ with probability $0$], independently of the choice of $\MM$ and
$\MM'$.

This approach to question $(\dagger)$ is unsatisfying, not because the answer is trivial,
but because it is easy to pinpoint the weakness of this approach.  If one wants to
construct an algebra $\al{A}$ which satisfies a Maltsev condition $\CC_{\MM}$ and fails to
satisfy another, $\CC_{\MM'}$, then one would arrange the satisfaction of $\CC_{\MM}$ with
as few operations as possible; the more `unnecessary' operations are added to $\al{A}$,
the less likely it is that it will fail to satisfy $\CC_{\MM'}$.  The most `optimal'
satisfaction of $\CC_{\MM}$ is achieved by requiring that $\al{A}$ satisfies $\CC_{\MM}$
with its basic operations (and $\al{A}$ has no other basic operations).  This is
equivalent to requiring that, if $\MM=(\LL,\Sigma)$, then $\al{A}$ is an $\LL$-algebra
satisfying the identities in $\Sigma$.  Such an algebra will be called a \emph{model of
  $\MM$}.

This motivates us to adopt the following interpretation
of the kind of questions alluded to at the
beginning of the introduction:
\begin{enumerate}
\item[$(\ddagger)$]
  ``How likely is it that
  a random finite model of $\MM$ 
  will satisfy [will fail to satisfy]
  the Maltsev condition $\CC_{\MM'}$?''
\end{enumerate}  
where $\MM$ and $\MM'$ are two finite systems of identities which are 
linear, idempotent, and satisfiable.
However, it should be noted that 
there is a price to pay for considering only random models of $\MM$ instead of
random algebras satisfying the corresponding Maltsev condition $\CC_\MM$: 
in Section~\ref{sec-appl} 
we will see examples showing there exist 
different systems $\MM_1,\MM_2$  that  describe equivalent Maltsev
conditions, but the random finite models of $\MM_1$ and the random finite
models of $\MM_2$ have essentially different properties, and hence
question~$(\ddagger)$ might have different answers for $\MM=\MM_1$ and
$\MM=\MM_2$. 

Our main result in this paper, Theorem~\ref{thm-idempr},
is a characterization of those
finite, satisfiable systems $\MM$ of idempotent linear identities
for which a random finite model is almost surely idemprimal.
As shown in Section~\ref{sec-appl}, these include many of the familiar
strong Maltsev conditions.
For these systems $\MM$, question~$(\ddagger)$ has the same
answer as question~$(\dagger)$:
A random finite model of $\MM$ almost surely satisfies every satisfiable strong,
idempotent, linear Maltsev condition $\CC_{\MM'}$.

To obtain this result, we first study syntactic properties of 
finite systems $\MM$ of idempotent linear identities (Section~\ref{sec-il_ids}),
which we then apply to analyze random finite models of $\MM$
(Section~\ref{sec-rmodels}).
We show in Theorem~\ref{thm-alg-descr} how one can construct all finite models
of $\MM$ by randomly (and independently)
filling out well-chosen parts of the operation tables,
and then by completing the remaining parts of the tables
(which are uniquely determined) so that all required identities are satisfied.
This result is crucial for the counting arguments we need for
determining the probability of idemprimality for random models of $\MM$.
In Section~\ref{sec-subalg} we prove that for every
finite, satisfiable systems $\MM$ of idempotent linear identities
the random models of $\MM$ have only small proper subalgebras, where
`small' depends on some parameters of $\MM$.
Section~\ref{sec-idempr} is devoted to the proof of our main result,
and in the final Section~\ref{sec-appl} we apply our results to some familiar
strong idempotent linear Maltsev conditions, and answer the specific questions
in the opening paragraphs above.

\section{Systems of Idempotent Linear Identities}
\label{sec-il_ids}

Let $\LL$ be an algebraic language with no constant symbols, and let
$V:=\{v_1,v_2,\dots\}$ be a set of distinct variables indexed by
positive integers.
We form $\LL$-terms using these variables only, but for convenience, we may use
other notation like $x,y,z$ or $x_i,y_j, z_k$, etc. for these variables.
An $\LL$-term is called \emph{linear} if it contains at most one operation
symbol.
An $\LL$-identity or a set of $\LL$-identities is called \emph{linear}
if all terms involved are linear.
If $s$ is an $\LL$-term and $X$ is a set of variables that contains all
variables occurring in $s$, then for any function $\gamma\colon X\to X$,
$s[\gamma]$ will denote the term obtained from $s$ by
replacing each variable $x\in X$ with $\gamma(x)\in X$.
In the special case in which $X$ is precisely the set of variables occurring in $s$,
a term of this form $s[\gamma]$ will be called an
\emph{identification minor} of $s$. In particular, $s[\gamma]$ is a
\emph{proper identification minor} of $s$ if $\gamma$ is not
injective.
Thus, the linear $\LL$-terms are the variables and all terms of the form $f[\gamma]$ in
which $f\in \LL$ and $\gamma\colon X \to X$ for some $X\subseteq V$ containing
$\{v_1,\dots,v_{\arity(f)}\}$. In this context we also use the symbol $f$ as shorthand for
the term $f(v_1,\dots,v_{\arity(f)})$. 
For any set $X$ of variables,
the set of all linear $\LL$-terms with variables in $X$
will be denoted by $\LT^\LL_X$.

As usual, if $\Sigma\cup\{\phi\}$ is a set of identities in the language $\LL$, we write
$\Sigma\models\phi$ to denote that every $\LL$-algebra that satisfies all identities in
$\Sigma$ also satisfies $\phi$.  We may call two terms, $s$ and $t$,
\emph{$\Sigma$-equivalent} if $\Sigma\models s\approx t$.  We will say that a set
$\Sigma$ of $\LL$-identities is \emph{unsatisfiable}, if it has no model of size greater
than~1, or equivalently, if $\Sigma\models x\approx y$ for distinct variables $x,y$;
otherwise we will say that $\Sigma$ is \emph{satisfiable}.
If $\Sigma$ is a set of linear $\LL$-identities, we might call the pair
$\MM=(\LL,\Sigma)$ a \emph{linear system}.

If $\Sigma\cup{\phi}$ is a set of linear identities, then there is a simple syntactic
characterization for the relation $\Sigma\models\phi$, due to David Kelly~\cite{kelly},
which we state in Theorem~\ref{thm-kelly} below\footnote{%
  Kelly's Theorem is slightly more general than Theorem~\ref{thm-kelly}: it allows
  constant symbols in $\LL$ and in the substitutions $\gamma$.  The theorem is a
  restriction of Birkhoff's completeness theorem for equational logic to the set of
  so-called `basic' identities, but it shows that there is a simple algorithm that decides
  for every set $\Sigma\cup{\phi}$ of basic identities whether $\Sigma\models\phi$.},
after introducing some terminology and notation.

Let $\MM=(\LL,\Sigma)$ be a linear system. For any set $X$ of variables, let
\[
\EE^{\MM}_X:=
\{(s,t)\in\LT^{\LL}_X\times\LT^\LL_X:\Sigma\models s\approx t\},
\]
the restriction of `$\Sigma$-equivalence of terms' to $\LT^{\LL}_X$. 
Clearly, this is an equivalence relation on $\LT^{\LL}_X$. 

If $X$ is a set of variables that contains all variables occurring in
$\Sigma$, we will use $\equiv^\MM_X$ to denote the least equivalence relation
on $\LT^\LL_X$ satisfying the following conditions:
\begin{enumerate}
\item[(i)]
  $\equiv^\MM_X$ contains $\Sigma$, i.e., $s\equiv^\MM_X t$ for every identity
  $s\approx t$ in $\Sigma$, and
\item[(ii)]
  $\equiv^\MM_X$ is closed under substitutions of variables, i.e.,
  whenever $s\equiv^\MM_X t$ holds for some $s,t\in\LT^\LL_X$, we also have
  $s[\gamma]\equiv^\MM_X t[\gamma]$ for all functions
  $\gamma\colon X\to X$.   
\end{enumerate}  
Thus, the pairs in $\equiv^\MM_X$ are obtained from the identities
in $\Sigma$ by applying the closure conditions induced by reflexivity,
symmetry, transitivity, and the condition in (ii). To emphasize that
these closure conditions are simple rules of inference for identities, we 
may write $\Sigma\vdash_X s\approx t$ to indicate that $s\equiv^\MM_X t$.

Finally,
we will say that $X$ is \emph{large enough for $\Sigma$} (or for $\MM$)
if
\begin{itemize}
\item
  $|X|\ge2$ and $X$ contains all variables occurring in $\Sigma$,
\item
  $|X|\ge\arity(f)$ for every $f\in\LL$, and
\item
  $|X|$ is at least as large as the number of distinct variables occurring in
  each identity in $\Sigma$.
\end{itemize}

\begin{thm}[Kelly~\cite{kelly}; for a published proof, see \cite{KKSz-rates1}]
\label{thm-kelly}
Let $X$ be a set of variables, and let
$\MM=(\LL,\Sigma)$ be a linear system in an
algebraic language $\LL$ without constant symbols.
\begin{enumerate}
\item
  If $X$ is large enough for $\Sigma$, then 
  $\Sigma$ is unsatisfiable if and only if $\Sigma\vdash_X x\approx y$
  for distinct variables $x,y\in X$.
\item
  Assume $\Sigma$ is satisfiable and $s\approx t$ is a linear
  $\LL$-identity.
  If $X$ is large enough for $\Sigma\cup\{s\approx t\}$, then for any
  $s,t\in\LT^\LL_X$ we have that
  \[
  \Sigma\models s\approx t
  \qquad\text{iff}\qquad
  \Sigma\vdash_X s\approx t.
  \]
\end{enumerate}
\end{thm}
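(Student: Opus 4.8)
The plan is to prove both statements by exhibiting a concrete free-style model in which $\Sigma$-provability and $\Sigma$-semantic consequence can be compared directly. The key observation is that, because all identities in $\Sigma$ are linear and $\LL$ has no constants, the relation $\equiv^\MM_X$ behaves very much like a congruence on a ``term algebra of depth $\le 1$.'' More precisely, I would first record the elementary fact that $\equiv^\MM_X \subseteq \EE^\MM_X$ always holds (soundness): reflexivity, symmetry, transitivity, and closure under variable substitutions (condition~(ii)) are all valid rules of inference for $\models$, so every pair derivable via $\Sigma\vdash_X$ is a genuine semantic consequence. The content of the theorem is the reverse inclusion, and here the hypothesis that $X$ is large enough is exactly what lets the ``basic'' (linear) fragment of Birkhoff's completeness theorem go through without needing deeper terms or fresh variables.

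For the heart of the argument I would build the quotient structure $\al{F} := \LT^\LL_X / {\equiv^\MM_X}$ and turn it into an $\LL$-algebra: for $f\in\LL$ of arity $k$, and classes $[x_{i_1}],\dots,[x_{i_k}]$ of variables from $X$, set $f^{\al F}([x_{i_1}],\dots,[x_{i_k}]) := [f(v_1,\dots,v_k)[\gamma]]$ where $\gamma\colon X\to X$ sends $v_j\mapsto x_{i_j}$ (and is arbitrary elsewhere); condition~(ii) guarantees this is well-defined on variable-classes, and since $|X|$ is large enough, the elements $[x]$ for $x\in X$ generate $\al F$ and every linear term evaluates (under the assignment $v_i\mapsto [v_i]$) to its own class. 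One then checks that $\al F$ satisfies every identity of $\Sigma$: given $s\approx t$ in $\Sigma$ with variables among $X$, the evaluation of $s$ in $\al F$ under $v_i\mapsto[v_i]$ is $[s]$, likewise for $t$, and $[s]=[t]$ by condition~(i). Hence $\al F \models \Sigma$. Now if $\Sigma\models s\approx t$ for linear $s,t\in\LT^\LL_X$, evaluating both sides in $\al F$ under $v_i\mapsto[v_i]$ forces $[s]=[t]$, i.e.\ $\Sigma\vdash_X s\approx t$; this is part~(2). For part~(1), if $\Sigma$ is satisfiable then in particular $\al F$ has size $>1$ (otherwise all variable-classes collapse, and pulling back through any genuine model of size $>1$ gives a contradiction), so $\Sigma\vdash_X x\approx y$ fails for distinct $x,y\in X$; conversely if $\Sigma\vdash_X x\approx y$ for distinct variables, then by soundness $\Sigma\models x\approx y$, so $\Sigma$ is unsatisfiable. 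The one subtlety for part~(1) is confirming that $|\al F|>1$ \emph{exactly when} $\Sigma$ is satisfiable: the forward direction uses that $\al F$ itself is a model, the backward direction uses soundness.

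The step I expect to be the main obstacle is verifying that $f^{\al F}$ is \emph{well-defined}, i.e.\ that the definition on variable-classes does not depend on the chosen representatives and genuinely yields a total operation on all of $\al F$ (not just on classes that contain a variable). For the first point, if $[x_{i_j}] = [x'_{i_j}]$ for all $j$ — that is, $\Sigma\vdash_X x_{i_j}\approx x'_{i_j}$ — then applying condition~(ii) repeatedly (substituting one variable at a time) gives $f(v_1,\dots,v_k)[\gamma] \equiv^\MM_X f(v_1,\dots,v_k)[\gamma']$, as needed. For the second point one must observe that in $\al F$ \emph{every} element is the class of some variable: since $\LL$ has no constants and all identities are linear, every term of $\LT^\LL_X$ is either a variable or a single operation applied to variables, and — crucially — because $X$ is large enough one can check that the image is still generated by the $[x]$'s; more carefully, the operations $f^{\al F}$ need only be defined on tuples of variable-classes because those are the only tuples that ever arise when evaluating linear terms, and the ``large enough'' hypothesis ensures that substituting $v_j\mapsto x_{i_j}$ with $x_{i_j}\in X$ always lands back inside $\LT^\LL_X$. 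Handling this bookkeeping cleanly — equivalently, checking that linearity prevents the term algebra from ``growing'' beyond depth~1 — is the technical crux; everything else is a routine application of the rules of inference.
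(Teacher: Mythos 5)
First, a point of comparison: the paper does not prove Theorem~\ref{thm-kelly} at all --- it is quoted from Kelly with a pointer to the published proof in \cite{KKSz-rates1} --- so there is no in-paper argument to measure your proposal against. Judged on its own terms, your proposal has a genuine gap, and it sits exactly where you flag the ``technical crux.''

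The quotient $\al{F} := \LT^\LL_X/{\equiv^\MM_X}$ is not an $\LL$-algebra. Your claim that every element of $\al{F}$ is the class of a variable is false in general: for the Maltsev system of Subsection~\ref{ssec-perm}, the term $f(x,y,x)$ is not $\Sigma$-equivalent to any variable, so its class contains no variable; the same happens for any linear system admitting a nontrivial linear term, which is the only interesting case. Hence $f^{\al F}$, as you define it, is only a partial operation (defined on tuples of variable-classes), and the $[x]$'s do not ``generate'' $\al F$ in any helpful sense --- applying $f^{\al F}$ to variable-classes immediately produces non-variable classes to which the operations must then be applied again. A partial algebra cannot play the role required for completeness: to pass from $\Sigma\not\vdash_X s\approx t$ to $\Sigma\not\models s\approx t$ you must exhibit a \emph{total} $\LL$-algebra satisfying $\Sigma$ in which $s\approx t$ fails, and any ad hoc extension of $f^{\al F}$ to tuples containing non-variable classes (a projection, a default value, etc.) wrecks the verification that $\al F\models\Sigma$. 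There is a second, independent gap even granting totality: your check that $\al F$ satisfies $\Sigma$ evaluates each identity only at the single canonical assignment $v_i\mapsto[v_i]$, whereas satisfaction requires all assignments $X\to F$, including those sending variables to non-variable classes. Condition (ii) --- closure under variable-for-variable substitution --- is precisely too weak to cover such assignments; in the classical Birkhoff free-algebra argument this step is rescued by full invariance (closure under substitution of arbitrary terms for variables), which is unavailable here because substituting a linear term into a linear term leaves the depth-$\le1$ fragment. These two obstructions are why Kelly's theorem has real content and a nontrivial published proof. The parts of your proposal that do work --- the soundness inclusion $\equiv^\MM_X\subseteq\EE^\MM_X$, and the reduction of part (1) to soundness together with the existence of some model --- are correct but are the easy half.
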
  

Let $\MM=(\LL,\Sigma)$ be linear and $X$ large enough for $\MM$.
The equivalence class of a term $t\in\LT^\LL_X$
in the equivalence relation $\EE^\MM_X$ (or, equivalently, $\equiv^\MM_X$)
will be denoted by $\EE^\MM_X[t]$. If $X$, $\LL$, $\MM$ are clear
from the context, we may write $\LT$, $\EE$, and $\equiv$ for
$\LT^\LL_X$, $\EE^\MM_X$, and $\equiv^\MM_X$, respectively.

From now on we will focus on the case when $\MM=(\LL,\Sigma)$
is also \emph{idempotent}, that is,
$\Sigma\models f(x,\dots,x)\approx x$ holds
for all operation symbols $f$ in $\LL$.
Lemmas~\ref{lm-ess-vars}--\ref{lm-symm} below establish basic properties
of the equivalence relation $\EE^\MM_X$ which will be used
later on in the paper. 

\begin{lm}
  \label{lm-ess-vars}
Assume $\MM=(\LL,\Sigma)$ is idempotent, linear, and satisfiable, and
let $X$ be a large enough set of variables for $\MM$.  
For every equivalence class $C$ of $\EE=\EE^\MM_X$ there exists
a unique nonempty set $X_C\subseteq X$
such that
\begin{itemize}
\item
  $X_C$ is the set of variables of some term in $C$, and
\item
  every variable in $X_C$ occurs in all terms in $C$.
\end{itemize}
Moreover, we have that
\begin{itemize}
\item
  the terms $t\in C$ are independent, relative to $\Sigma$, of their variables
  not in $X_C$; i.e., $\Sigma\models t(X_C,\bar{z})\approx t(X_C,\bar{z}')$
  for arbitrary lists of variables $\bar{z},\bar{z}'$ in $X$.
\end{itemize}  
\end{lm}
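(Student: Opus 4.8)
The plan is to work directly with the syntactic calculus $\vdash_X$ justified by Kelly's Theorem, and extract $X_C$ as the intersection of the variable-sets of all terms in $C$. Concretely, for an $\EE$-class $C$, define $X_C := \bigcap_{t \in C} \mathrm{var}(t)$, where $\mathrm{var}(t)$ denotes the set of variables occurring in $t$. The whole statement then breaks into three assertions: (a) $X_C$ is nonempty; (b) $X_C$ is realized, i.e. some term in $C$ has variable-set exactly $X_C$; (c) the independence property $\Sigma \models t(X_C,\bar z) \approx t(X_C,\bar z')$ holds for $t \in C$. Uniqueness of a set with the two bulleted properties is then immediate: the second bullet forces any such set to be contained in $\mathrm{var}(t)$ for all $t\in C$, hence in $X_C$; the first bullet says it is the variable-set of some term in $C$, and that variable-set must contain $X_C$ by the second bullet applied to $X_C$ itself once (b) is known — so the two coincide.

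**First I would** handle the easy direction of realizability and the independence claim together. Given any $t \in C$ with $\mathrm{var}(t) = Y \supseteq X_C$, I want to show the variables in $Y \setminus X_C$ are "fictitious relative to $\Sigma$." For each such variable $y$, by definition of $X_C$ there is a term $s \in C$ with $y \notin \mathrm{var}(s)$; since $t \equiv s$ (both in $C$) and $\equiv^\MM_X$ is closed under substitutions of variables (condition (ii)), I can substitute for $y$ any other variable $x \in X_C$ (nonempty once (a) is proved) and obtain $t[y \mapsto x] \equiv s[y \mapsto x] = s'$ where $s' \in C$ still (as $s \equiv t \equiv s[y\mapsto x]$ — wait, need care: substitution applied to the pair $s \approx t$ gives $s[y\mapsto x] \approx t[y\mapsto x]$; since $s$ doesn't contain $y$, $s[y\mapsto x] = s$, so $t[y\mapsto x] \equiv s \equiv t$, hence $t[y\mapsto x] \in C$). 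Iterating over all of $Y \setminus X_C$ produces a term $t' \in C$ with $\mathrm{var}(t') \subseteq X_C$; since also $\mathrm{var}(t') \supseteq X_C$ trivially fails in general — but in fact $\mathrm{var}(t') \subseteq X_C$ combined with $X_C = \bigcap_{u\in C}\mathrm{var}(u) \subseteq \mathrm{var}(t')$ gives $\mathrm{var}(t') = X_C$, proving (b). The same substitution argument, applied with $x$ replaced by an arbitrary variable $z \in X$, shows $t(X_C,\bar z) \equiv t(X_C, \bar z')$ for any lists $\bar z, \bar z'$, which is (c), using idempotence only insofar as it guarantees such substitutions stay within the relevant term set.

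**The hard part will be** proving (a), that $X_C \ne \emptyset$ — equivalently, that not every variable can be eliminated, i.e. $C$ does not contain terms with pairwise disjoint variable sets whose common refinement is empty. Suppose for contradiction $X_C = \emptyset$. Then by the elimination procedure above (which only needs a nonempty target variable to substitute into, and $|X|\ge 2$ supplies spare variables), one could drive some $t \in C$ down to a term in a single variable, say $t' = v$ for some $v \in X$; but then by a symmetric elimination we could also reach $t'' = w$ for a different variable $w$, giving $v \equiv w$, i.e. $\Sigma \vdash_X v \approx w$ for distinct $v,w \in X$. Since $X$ is large enough for $\MM$, Theorem~\ref{thm-kelly}(1) then forces $\Sigma$ to be unsatisfiable, contradicting our hypothesis. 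Making the "drive down to a single variable" step precise is the delicate point: I need that if $X_C = \emptyset$ then for each variable $x \in X$ there is a term in $C$ omitting $x$, and that I can chain the substitutions without reintroducing eliminated variables — this requires choosing substitution targets carefully among the (at least two) available variables, possibly using that $X$ is large enough so that there is always a "fresh enough" variable to land on. I expect this bookkeeping, together with the induction on $|\mathrm{var}(t)|$ organizing the elimination, to be the only genuinely non-routine part; everything else is a direct unwinding of the closure conditions (i)–(ii) plus one appeal to Kelly's Theorem.
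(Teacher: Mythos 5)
Your proof is correct, and the underlying ingredients are the same as the paper's (closure of $\Sigma$-equivalence under variable substitution, idempotence to collapse a one-variable term to that variable, and satisfiability to forbid $\Sigma\models v\approx w$ for distinct $v,w$), but the organization is genuinely different. You define $X_C$ as the intersection $\bigcap_{t\in C}\mathrm{var}(t)$ and must then prove separately that it is nonempty and that it is realized, which is why you flag nonemptiness as the hard part. The paper instead picks a term $s\in C$ whose variable set $V_s$ is $\subseteq$-minimal among members of $C$ and sets $X_C:=V_s$; nonemptiness is then automatic (every linear term over a constant-free language contains a variable), and the whole argument reduces to a single comparison: if some $t\in C$ omitted a variable of $V_s$, then $V_s\cap V_t\neq\emptyset$ (otherwise collapsing each term to a single variable and using idempotence yields $\Sigma\models x\approx y$), and collapsing the variables of $V_s\setminus V_t$ to some $x_1\in V_s\cap V_t$ produces a member of $C$ with variable set $V_s\cap V_t\subsetneq V_s$, contradicting minimality. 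So the minimal-term trick buys you exactly the step you identified as delicate. Two loose ends in your write-up are worth tightening, though neither is a genuine gap: first, in part (c) chaining single-variable substitutions $y_i\mapsto z_i$ can interfere when some target $z_i$ coincides with a not-yet-processed $y_j$; the clean route (and the paper's) is to first establish $t(X_C,\bar{y})\approx t(X_C,x_1,\dots,x_1)$ for a fixed $x_1\in X_C$, which involves no interference since $x_1$ is never eliminated, and then apply the simultaneous substitution $\bar{y}\mapsto\bar{z}$ to that identity. Second, your ``drive down to $t'=v$'' step actually produces a term whose variable set is $\{v\}$, and you need idempotence to conclude $t'\equiv v$; idempotence therefore enters essentially at that point (and in ruling out disjoint variable sets), so your parenthetical remark that it is used only to keep substitutions ``within the relevant term set'' undersells its role.
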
  

\begin{proof}
  Let $C$ be any equivalence class of $\EE$, and let $s\in C$ be such that
  the set $V_s$ of variables occurring in $s$ is minimal (with respect to
  $\subseteq$) among the members of
  $C$. For the first claim
  it suffices to show that all variables in $V_s$ occur in every member
  of $C$. Suppose not, and let $t\in C$ be a witness to this fact.
  Thus, the set $V_t$ of variables occurring in $t$ is incomparable to $V_s$.
  It cannot be that $V_s\cap V_t=\emptyset$,
  because then $(s,t)\in\EE$ --- i.e., $\Sigma\models s\approx t$ ---
  would imply $\Sigma\models s(x,\dots,x)\approx t(y,\dots,y)$
  for any distinct $x,y\in X$.  
  By idempotence this would yield $\Sigma\models x\approx y$, contradicting
  our assumption that $\Sigma$ is satisfiable. Thus $V_s\cap V_t\not=\emptyset$.
  Let $s=s(\bar{x},\bar{y})$ and $t=t(\bar{x},\bar{z})$ where
  $\bar{x}=(x_1,\dots,x_\ell)$ ($\ell\ge1$)
  lists the variables in $V_s\cap V_t$, and
  $\bar{y}$, $\bar{z}$ list the variables in $V_s\setminus V_t$ and
  $V_t\setminus V_s$ respectively. By the choice of $s$ and $t$,
  $\bar{y}$ and $\bar{z}$ are both nonempty.
  Now $(s,t)\in\EE$ --- or equivalently,
  $\Sigma\models s(\bar{x},\bar{y})\approx t(\bar{x},\bar{z})$ ---
  implies that
  $\Sigma\models s(\bar{x},\bar{y})\approx t(\bar{x},x_1,\dots,x_1)
  \approx s(\bar{x},x_1,\dots,x_1)$.
  Thus, $s(\bar{x},x_1,\dots,x_1)\in C$ and the set of variables occurring
  in this term, $V_s\cap V_t$, is a proper subset of $V_s$. This contradicts the
  minimality property of $s$, and hence proves the first claim.

  For an arbitrary $t\in C$ the same argument as
  above shows that $t=t(X_C,\bar{y})$
  for some (possibly empty) list $\bar{y}$ of variables which is disjoint
  from $X_C$, and
  $\Sigma\models t(X_C,\bar{y})\approx t(X_C,x_1,\dots,x_1)$
  for any $x_1\in X_C$.
  Thus,
  \[
  \Sigma\models t(X_C,\bar{z})\approx t(X_C,x_1,\dots,x_1)
  \approx t(X_C,\bar{z}')
  \]
  for arbitrary lists of variables $\bar{z},\bar{z}'$ in $X$, as claimed.
\end{proof}

Under the same assumptions on $\MM$ and $X$ as in Lemma~\ref{lm-ess-vars}, 
it is easy to see that
the equivalence relation $\EE=\EE^\MM_X$ is invariant
under all permutations of the variables in $X$;
that is, for every permutation $\gamma\in S_X$ and for
arbitrary terms $s,t\in\LT=\LT^\LL_X$, we have
    \[
    (s,t)\in\EE\qquad\text{if and only if}\qquad
    (s[\gamma],t[\gamma])\in\EE.
    \]
Hence, the symmetric group $S_X$ has an induced action on the set of
blocks of $\EE$ defined by
    \[
    \gamma\cdot\EE[t]=\EE[t[\gamma]]
    \qquad
    \text{for all $t\in\LT$ and $\gamma\in S_X$}.
    \]

\begin{lm}
  \label{lm-symm}
  Assume $\MM=(\LL,\Sigma)$ is idempotent, linear, and satisfiable, and
  let $X$ be a large enough set of variables for $\MM$.
The following statements hold for
arbitrary equivalence classes $B$ and $C$ 
of
$\EE=\EE^\MM_X$:
    \begin{itemize}
    \item
      If $B$ and $C$ are in the same orbit of $\EE$ 
      under the action of $S_X$, say $\gamma\cdot B=C$, then
      $\gamma$ restricts to a bijection $X_B\to X_C$, and
      $\gamma'\cdot B=C$ for all $\gamma'\in S_X$ satisfying
      $\gamma'(x)=\gamma(x)$ for all $x\in X_B$.
    \item
      In particular,
      $S_{X_C}$ has a unique subgroup $G_C$
      such that
      for any $\gamma\in S_X$ we have $\gamma\cdot C=C$ if and only if
      $\gamma(X_C)=X_C$ and $\gamma\restr{X_C}\in G_C$.
    \item
      Moreover, for any $t\in C$ whose variables are exactly the variables in
      $X_C$, and for any permutation $\pi\in S_{X_C}$,
      \[
      \Sigma\models t\approx t[\pi]
      \qquad\text{iff}\qquad
      (t,t[\pi])\in\EE
      \qquad\text{iff}\qquad
      \pi\in G_C.
      \]
    \end{itemize}    
\end{lm}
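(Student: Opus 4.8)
The plan is to work through the three bullet points in order, since each one feeds into the next. For the first bullet, suppose $\gamma\cdot B=C$, i.e. $\EE[t[\gamma]]=C$ for some $t\in B$. The key observation is that $\gamma$ maps the variable set of $t$ to the variable set of $t[\gamma]$, and more generally $\gamma$ carries the common-variable set of $B$ to that of $C$. Concretely, I would take a term $t\in B$ whose variables are exactly $X_B$ (which exists by Lemma~\ref{lm-ess-vars}); then $t[\gamma]$ has variables exactly $\gamma(X_B)$, and since $t[\gamma]\in C$, the defining property of $X_C$ in Lemma~\ref{lm-ess-vars} forces $\gamma(X_B)\supseteq X_C$. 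Running the same argument with $\gamma^{-1}$ gives $\gamma^{-1}(X_C)\supseteq X_B$, hence $\gamma(X_B)=X_C$. For the ``moreover'' clause: if $\gamma'$ agrees with $\gamma$ on $X_B$, then $t[\gamma']$ and $t[\gamma]$ differ only in variables not in $X_B=$ (variables of $t$), but since the variables of $t$ are exactly $X_B$, in fact $t[\gamma']=t[\gamma]$ as terms, so trivially $\gamma'\cdot B=\EE[t[\gamma']]=\EE[t[\gamma]]=C$.

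For the second bullet, I would apply the first bullet with $B=C$: the stabilizer of $C$ in $S_X$ consists exactly of those $\gamma$ with $\gamma(X_C)=X_C$, and the stabilizer is determined by the restriction $\gamma\restr{X_C}$. So define $G_C:=\{\gamma\restr{X_C}: \gamma\in S_X,\ \gamma\cdot C=C\}\subseteq S_{X_C}$. One checks $G_C$ is a subgroup (it is the image of the stabilizer subgroup under the restriction homomorphism $\{\gamma: \gamma(X_C)=X_C\}\to S_{X_C}$), and by the first bullet's ``moreover'' clause, $\gamma\cdot C=C$ holds iff $\gamma(X_C)=X_C$ and $\gamma\restr{X_C}\in G_C$. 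Uniqueness of $G_C$ with this property is immediate since the condition pins down exactly which permutations of $X$ stabilize $C$, and $|X|\ge 2$ together with $|X|\ge|X_C|$ guarantees every permutation of $X_C$ extends to a permutation of $X$ fixing $X_C$ setwise, so $G_C$ is recovered as the set of such restrictions.

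For the third bullet, fix $t\in C$ with variables exactly $X_C$ and $\pi\in S_{X_C}$. The first ``iff'' ($\Sigma\models t\approx t[\pi]$ iff $(t,t[\pi])\in\EE$) is just the definition of $\EE=\EE^\MM_X$ once one notes $t,t[\pi]\in\LT^\LL_X$. For the second ``iff'': extend $\pi$ to a permutation $\hat\pi\in S_X$ with $\hat\pi\restr{X_C}=\pi$ (possible since $|X|\ge|X_C|$). Then $(t,t[\pi])\in\EE$ means $\EE[t[\hat\pi]]=\EE[t]=C$, i.e. $\hat\pi\cdot C=C$, which by the second bullet is equivalent to $\hat\pi(X_C)=X_C$ (automatic, as $\pi\in S_{X_C}$) and $\hat\pi\restr{X_C}=\pi\in G_C$. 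The main thing to be careful about is the interplay between $\EE^\MM_X$ (semantic $\Sigma$-equivalence) and $\equiv^\MM_X$ (syntactic derivability): the paper has already noted these coincide on $\LT^\LL_X$ when $X$ is large enough for $\MM$ by Theorem~\ref{thm-kelly}, so I can freely use whichever is convenient, but I should state explicitly at the outset that I am invoking that identification. I expect the first bullet to be where the real content lies — getting $\gamma$ to restrict to a bijection $X_B\to X_C$ rather than merely a surjection onto or injection into — and the rest is bookkeeping with group actions and stabilizers.
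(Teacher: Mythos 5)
Your proof is correct and matches the paper's intended argument: the paper simply asserts that all three claims are straightforward consequences of Lemma~\ref{lm-ess-vars} and the definition of the $S_X$-action on blocks, and your writeup supplies exactly those details (applying the argument to both $\gamma$ and $\gamma^{-1}$ to get $\gamma(X_B)=X_C$, realizing $G_C$ as the image of the stabilizer of $C$ under the restriction homomorphism, and extending $\pi$ to $\hat{\pi}\in S_X$ for the final equivalence). No gaps.
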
  

\begin{proof}
  All three claims are straightforward consequences of
  Lemma~\ref{lm-ess-vars} and the
  definition of the action of $S_X$.
\end{proof}  

\begin{df}
  \label{df-ess-vars_symm}
Let $\MM=(\LL,\Sigma)$ be idempotent, linear, and satisfiable, and
let $X$ be a large enough set of variables for $\MM$.
For any equivalence class $C$ of $\EE=\EE^\MM_X$,
\begin{itemize}
  \item
we will refer to
the set $X_C$ (see Lemma~\ref{lm-ess-vars})
as \emph{the set of essential variables} of the terms in $C$,
and if $t\in C$, we may write $X_t$ in place of $X_C$, and call it
\emph{the set of essential variables} of $t$;
\item
we will refer to the group $G_C$ (see Lemma~\ref{lm-symm})
as \emph{the symmetry group} of the terms in $C$, and
if $t\in C$, we may write $G_t$ for $G_C$ and call it
\emph{the symmetry group} of $t$.
\end{itemize}
If we want to emphasize the dependence of these notions on $\MM$ we may
talk about essential variables or symmetry groups of terms
\emph{relative to $\MM$}. 
\end{df}

\begin{exmp}
  \label{exmp:cmaltsev}
Let $\LL$ consist of a single ternary operation symbol, $f$, and take
$\MM=(\LL,\Sigma)$ where
\begin{equation*}
  \Sigma=\{f(x,x,y)\approx y,\,  f(x,y,z) \approx f(z,y,x)\}.
\end{equation*}
Since $\Sigma\models f(x,x,x)\approx x$, we see that $\MM$ is idempotent
and linear. It is clear from the definition that the set $X=\{x,y,z\}$
of variables is large enough for $\MM$.
There are a total of 30 linear terms
in $\LT=\LT^\LL_X$:
$27$ terms containing
$f$, plus the $3$ variables.
One can easily determine the equivalence relation $\EE=\EE^\MM_X$, using
Theorem~\ref{thm-kelly}; it turns out that
$\EE$ partitions $\LT$ as shown in the first two columns of
Table~\ref{fig:cmaltsev}.
Hence, in particular, it follows that $\MM$ is satisfiable.

\begin{table}
  \centering
  \renewcommand{\arraystretch}{1.15}
  \begin{tabular}{|l|l|l|l|}
    \hline
    $C$ & \hfil Members \hfil& $X_C$ & $G_C\,(\le S_{X_C})$\\
    \hline
    $B_1$ & $x$, $f(x,x,x)$, $f(y,y,x)$, $f(x,y,y)$, $f(z,z,x)$,
            $f(x,z,z)$ & $\{x\}$ & $\{\id\}$\\
    $B_2$ & $y$, $f(y,y,y)$, $f(x,x,y)$, $f(y,x,x)$, $f(z,z,y)$,
            $f(y,z,z)$ & $\{y\}$ & $\{\id\}$\\
    $B_3$& $z$, $f(z,z,z)$, $f(x,x,z)$, $f(z,x,x)$, $f(y,y,z)$, $f(z,y,y)$
                  &$\{z\}$ & $\{\id\}$\\
    \hline
    $C_1$ & $f(x,y,x)$ & $\{x,y\}$ & $\{\id\}$\\
    $C_2$ & $f(y,x,y)$ & $\{x,y\}$ & $\{\id\}$\\
    $C_3$ & $f(x,z,x)$ & $\{x,z\}$ & $\{\id\}$\\
    $C_4$ & $f(z,x,z)$ & $\{x,z\}$ & $\{\id\}$\\
    $C_5$ & $f(y,z,y)$ & $\{y,z\}$ & $\{\id\}$\\
    $C_6$ & $f(z,y,z)$ & $\{y,z\}$ & $\{\id\}$\\
    \hline
    $D_1$ & $f(x,y,z)$, $f(z,y,x)$ & $\{x,y,z\}$ & $\langle(x\ z)\rangle$\\
    $D_2$ & $f(y,x,z)$, $f(z,x,y)$ & $\{x,y,z\}$ & $\langle(y\ z)\rangle$\\
    $D_3$ & $f(x,z,y)$, $f(y,z,x)$ & $\{x,y,z\}$ & $\langle(x\ y)\rangle$\\
    \hline
  \end{tabular}
  \smallskip
  \caption{Equivalence classes under $\EE$ for Example~\ref{exmp:cmaltsev}}\label{fig:cmaltsev}
\end{table}

The
action of the symmetric
group $S_X$ induces three orbits on the blocks
of $\EE$:
$\{B_1,B_2, B_3\}$, $\{C_1,C_2,C_3,C_4,C_5,C_6\}$, and $\{D_1,D_2,D_3\}$.
The symmetry
groups of the blocks $B_i$ and $C_i$ are all trivial. The symmetry
group of each $D_i$ has order~2. For example, $G_{D_1}$ contains the
permutation transposing $x$ and $z$.
The last two columns of Table~\ref{fig:cmaltsev} display
the sets of essential variables and the symmetry groups 
of all equivalence classes of $\EE$.
\qeddiamond
\end{exmp}

Returning to our general discussion, let again $\MM$ and $X$ be as
in Lemmas~\ref{lm-ess-vars}--\ref{lm-symm}, and for any positive integer
$n$, let $[n]$ denote the set $\{1,\dots,n\}$.
Consider a linear term $r=r(x_1,\dots,x_k)$ in $\LT=\LT_X^\LL$ 
where the variables $x_1,\dots,x_k\in X$ are distinct.
We have $r(x_1,\dots,x_k)=f(x_{\phi(1)},\dots,x_{\phi(d)})$ for some
$f\in\LL$ with  $d=\arity(f)$ and some onto function
$\phi\colon[d]\to[k]$. It is easy to see that if 
$f(y_1,\dots,y_d)\in\LT$ is another linear term, then
under the action of $S_X$ by permuting variables,
$f(y_1,\dots,y_d)\in\LT$ lies in the same orbit as $r$
--- that is,
$f(y_1,\dots,y_d)=r[\gamma]$ for some $\gamma\in S_X$ --- if and only if
the $d$-tuples of variables
$(x_{\phi(1)},\dots,x_{\phi(d)})$ and $(y_1,\dots,y_d)$
have the same `pattern' in the following sense.

\begin{df}
For any set $U$ and $d$-tuple $\bar{u}=(u_1,\dots,u_d)\in U^d$
we define the \emph{pattern of $\bar u$} to be
the equivalence relation
$\epsilon(\bar u):=\{\,(i,j)\in[d]^2 : u_i=u_j\,\}$
on $[d]$.
Two $d$-tuples, $\bar u,\bar v\in U^d$ are said to
\emph{have the same pattern} if 
$\epsilon(\bar u) = \epsilon(\bar v)$.
We might refer to equivalence relations on $[d]$ as
\emph{patterns on $[d]$}.
Given a pattern $\mu$ on $[d]$, we define
\begin{equation*}
  U^{(\mu)} = \{\bar u\in U^d : \epsilon(\bar u)=\mu\}.
\end{equation*}
We shall write $U^{(d)}$ in place of $U^{(\mu)}$ when
$\mu$ is the equality relation on $[d]$. 
\end{df}

In this terminology, the pattern of the variables
$(x_{\phi(1)},\dots,x_{\phi(d)})$ of the term $r=f(x_{\phi(1)},\dots,x_{\phi(d)})$
is the kernel of the function $\phi$.
Furthermore, if a term, $s\in\LT$, exhibits a pattern $\mu$ in its variables,
then for any $\gamma \in S_X$, the term $s[\gamma]$ also has pattern
$\mu$. Thus, if a term appears in an equivalence class, $C$, of $\EE$,
then a similar term, with the same pattern of variables, appears in every
block of the orbit of $C$, and in no other orbits. 

We now introduce another concept which will
be useful in the forthcoming sections, and is related  
to the equivalence classes of $\EE$ and the orbits of the action of
$S_X$ on the set of equivalence classes.

\begin{df}
Let $\MM=(\LL,\Sigma)$ be idempotent, linear, and satisfiable, and
let $X$ be a large enough set of variables for $\MM$.
We will say that two terms $s,t\in\LT=\LT^\LL_X$ are
\emph{essentially different for $\MM$} if
in the equivalence relation $\EE=\EE^\MM_X$, 
the equivalence classes $\EE[s]$ and $\EE[t]$ of $s$ and $t$
belong to different orbits of $S_X$.
\end{df}

Equivalently, $s$ and $t$ are essentially different for $\MM$
if and only if $\Sigma\not\models s[\gamma]\approx t[\delta]$ for any
$\gamma,\delta\in S_X$.
Lemma~\ref{lm-symm} implies that, if
$s,t\in\LT$ have different numbers of essential
variables (relative to $\MM$), then $s,t$ are essentially different
for $\MM$. Furthermore, if
$s=s(x_1,\dots,x_d)\in\LT$ and
$t=t(x_1,\dots,x_d)\in\LT$ are two linear terms
such that $\{x_1,\dots,x_d\}$ is the set of essential variables of both,
then they are essentially different for $\MM$ if and only if
$\Sigma\not\models s(x_1,\dots,x_d)\approx t\bigl(\pi(x_1),\dots,\pi(x_d)\bigr)$
for any permutation $\pi$ of $\{x_1,\dots,x_d\}$.

\bigskip

In the remainder of this section we will discuss minimal terms for $\MM$,
which we now define.

\begin{df}
Let $\MM=(\LL,\Sigma)$ be idempotent and linear, and let
$t=t(x_1,\dots,x_d)$ be a linear $\LL$-term (where the variables
$x_1,\dots,x_d$ are distinct).
We will say that
\begin{enumerate}
\item[(1)]
$t$ is a \emph{trivial term for $\MM$} if
$\Sigma\models t\approx z$ for some variable $z$, and
\item[(2)]
$t$ is a \emph{minimal term for $\MM$} if
  \begin{itemize}
  \item
    $\Sigma\not\models t\approx z$ for any variable $z$, but
  \item
    $\Sigma\models t[\gamma]\approx z_\gamma$ for some variable $z_\gamma$,
    whenever $\gamma\colon\{x_1,\dots,x_d\}\to\{x_1,\dots,x_d\}$
    is a non-injective function.
  \end{itemize}
\end{enumerate}  
In other words,
$t$ is a trivial term for $\MM$ iff $t$ is
  $\Sigma$-equivalent to a variable, while
$t$ is a minimal term for $\MM$ iff $t$ is non-trivial,
  but every proper identification minor of 
  $t$ is trivial.
\end{df}  

Notice that
a minimal term exists for $\MM$ only if $\MM$ is satisfiable. Moreover,
if $t=t(x_1,\dots,x_d)$ is a minimal term for $\MM$,
then $X_t=\{x_1,\dots,x_d\}$, that is, all variables $x_1,\dots,x_d$ of
$t$ are essential.

\begin{thm}
  \label{thm-min-t}
    Assume $\MM=(\LL,\Sigma)$ is idempotent, linear, and satisfiable.
  For every $f\in\LL$, 
    either the term $f=f(x_1,\dots,x_{\arity(f)})$
  \textup(where $x_1,\dots,x_{\arity(f)}$ are distinct variables\textup)
  is trivial for $\MM$, or it
    has an identification minor that is a 
  minimal term for $\MM$.
    Moreover, every minimal term  $t$ of $\MM$ satisfies
  one of the following conditions, up to a permutation of its
  variables:
  \begin{enumerate}
\item[{\rm(1)}]
  $t$ is
    a nontrivial binary term for $\MM$,
    that is,
  $t=t(x,y)$ such that
  \[
  \Sigma\not\models t\approx x\quad\text{and}\quad
  \Sigma\not\models t\approx y.
  \]
\item[{\rm(2)}]
  $t$ is a minority term
    for $\MM$,
    that is, $t=t(x,y,z)$ with
  \[
  \Sigma\models t(x,y,y)\approx t(y,x,y)\approx t(y,y,x)\approx x.
  \]
\item[{\rm(3)}]
  $t$ is a $\frac{2}{3}$-minority term
    for $\MM$,
    that is, $t=t(x,y,z)$ with
  \[
  \Sigma\models t(x,y,y)\approx t(x,y,x)\approx t(y,y,x)\approx x.
  \]
\item[{\rm(4)}]
  $t$ is a majority term
    for $\MM$,
    that is, $t=t(x,y,z)$ with
  \[
  \Sigma\models t(x,y,y)\approx t(y,x,y)\approx t(y,y,x)\approx y.
  \]
\item[{\rm(5)}]
  $t$ is a semiprojection term
    for $\MM$,
    that is,
  $t=t(x_1,\dots,x_d)$ $(d\ge3)$,
  \begin{align*}
    \Sigma &{}\not\models t(x_1,\dots,x_d)\approx x_1, \quad\text{but}\\
    \Sigma &{}\models t(y_1,\dots,y_d)\approx y_1\quad
    \text{for any variables $y_1,\dots,y_d$ with
      $|\{y_1,\dots,y_d\}|<d$.}
  \end{align*}
\end{enumerate}
\end{thm}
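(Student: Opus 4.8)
The final statement splits into two independent assertions, which I would prove in turn.

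\smallskip

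\emph{First assertion.} Fix $f\in\LL$ and suppose the term $f=f(x_1,\dots,x_{\arity(f)})$ is not trivial for $\MM$. Consider the family of all identification minors of $f$ that are non‑trivial for $\MM$; it is nonempty because it contains $f$ itself, so I may choose a member $t$ with the fewest variables. Then $t$ is non‑trivial by construction, and if $s=t[\gamma]$ is a proper identification minor of $t$, then $s$ is itself an identification minor of $f$ — the composite of two variable substitutions is again a variable substitution — and the number of variables occurring in $s$ is strictly smaller than that of $t$ because $\gamma$ is non‑injective; by the minimal choice of $t$, the term $s$ is therefore trivial. Hence $t$ is a minimal term for $\MM$ which is an identification minor of $f$.

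\smallskip

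\emph{Second assertion, small arities.} Let $t=t(x_1,\dots,x_d)$ be a minimal term. As noted after the definition, all $x_i$ are essential and $t$ contains an operation symbol, so by idempotence $d\ge2$. If $d=2$, then the hypothesis that $t$ is non‑trivial together with Lemma~\ref{lm-ess-vars} (any variable $\Sigma$‑equivalent to $t$ must occur in $t$) says precisely that $\Sigma\not\models t\approx x_1$ and $\Sigma\not\models t\approx x_2$, which is case~(1). If $d=3$, I would run a finite case analysis. Up to a permutation of the variables of $t$, the proper identification minors of $t$ with two variables are $t(x_1,x_1,x_3)$, $t(x_1,x_2,x_1)$, $t(x_1,x_2,x_2)$, and by Lemma~\ref{lm-ess-vars} each is $\Sigma$‑equivalent to the variable in its un‑merged slot or to the variable shared by its two merged slots. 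Recording for each of the three minors the bit ``$\Sigma$‑equivalent to the un‑merged variable'', and using that the action of $S_3$ on the three pairs of slots is the full symmetric group on the three bits, I am left with exactly four patterns to examine up to a permutation of variables; reading each pattern back as a system of identities — and invoking non‑triviality of $t$, which is needed for only one of the patterns — exhibits $t$, up to a permutation of its variables, as a minority, a majority, a $\frac23$‑minority, or a semiprojection term, i.e.\ one of (2)--(5).

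\smallskip

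\emph{Second assertion, arity ${}\ge4$.} Here $t$ must be shown to be a semiprojection. For $i\ne j$ let $t_{ij}$ be the term obtained by substituting $x_j$ for $x_i$ throughout $t$; since $t$ is minimal and $t_{ij}$ is a proper identification minor, $t_{ij}$ is trivial, so $\Sigma\models t_{ij}\approx x_{p(ij)}$ for a unique variable occurring in $t_{ij}$. Up to the renaming inherent in the identification, this variable is either the one shared by $x_i$ and $x_j$ — say then that the pair $\{i,j\}$ lies in $Z$ — or the variable in a slot outside $\{i,j\}$ — say then that $\{i,j\}$ points to that slot. The core of the proof is a short list of elementary deductions, each of which compares two ways of reaching one identification minor with three or four slots collapsed and uses that $\Sigma$, being satisfiable, does not prove any two distinct variables equal: (i) no two disjoint pairs can both lie in $Z$, so $Z$ is an intersecting family of $2$‑element sets; (ii) $Z$ is neither empty (an ``everywhere‑pointing'' configuration forces two distinct variables equal), nor a single pair, nor a full triangle; since an intersecting family of $2$‑element sets is a star or is contained in the edge set of a triangle, it follows that $Z$ is a star, and since $|Z|\ge2$ its center $\ell$ is unique; (iii) every pair containing $\ell$ belongs to $Z$, and every pair avoiding $\ell$ points to $\ell$. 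Granting this, $\Sigma\models t_{ij}\approx x_\ell$ whenever $\ell\notin\{i,j\}$, while $t_{ij}$ is $\Sigma$‑equivalent to the shared, hence the $\ell$‑th, variable whenever $\ell\in\{i,j\}$; so for any variables $y_1,\dots,y_d$ with $|\{y_1,\dots,y_d\}|<d$, picking a coincident pair of coordinates $i,j$ and applying the corresponding identity yields $\Sigma\models t(y_1,\dots,y_d)\approx y_\ell$. Non‑triviality of $t$ gives $\Sigma\not\models t\approx x_\ell$, and after permuting the variables so that $\ell$ becomes the first slot, $t$ satisfies~(5).

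\smallskip

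\emph{Main obstacle.} I expect steps (i)--(iii) of the arity‑${}\ge4$ analysis to be the hard part. Each individual deduction is routine, but carrying it out needs careful bookkeeping of which variable currently plays the role of the merged one and of how the auxiliary substitutions act on it; and one must arrange the exclusions — no two disjoint pairs in $Z$, $Z\ne\emptyset$, $Z$ not a single edge, $Z$ not a triangle, all pairs through $\ell$ in $Z$, all pairs avoiding $\ell$ pointing to $\ell$ — so that each drops out of this one scheme rather than requiring a separate ad hoc argument. Everything else — the small‑arity case, the reductions via Lemma~\ref{lm-ess-vars}, closure of $\models$ under substitution, and idempotence — should be routine.
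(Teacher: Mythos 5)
Your proposal is correct and follows essentially the same route as the paper: minimize the number of occurring variables over the nontrivial identification minors of $f$ to obtain the first assertion, use satisfiability (via idempotence) to force every proper identification minor of a minimal term to collapse to one of its \emph{own} variables, and then classify by arity. The only place you do substantially more work than the paper is the arity-${}\ge4$ case: your steps (i)--(iii) amount to re-proving \'{S}wierczkowski's Lemma, which the paper simply cites \cite{swier}, so the ``main obstacle'' you flag can be outsourced entirely; similarly, your four-orbit analysis for $d=3$ is a correct filling-in of what the paper leaves as an unargued assertion.
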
  

\begin{proof}
Let $f\in\LL$ 
be such that the term $f=f(v_1,\dots,v_{\arity(f)})$ is
nontrivial for $\MM$.
To see that $f$ has
an identification minor that is a minimal term for $\MM$,
let $X$ be a set of variables that is large enough for $\MM$
and contains $v_1,\dots,v_{\arity(f)}$,
and consider
all identification minors $t=f[\gamma]\in\LT$ of
the term $f=f(v_1,\dots,v_{\arity(f)})$.
These include the term $t=f$, which is nontrivial
for $\MM$ by assumption, and
they also include the terms
$t=f(x,\dots,x)$ ($x\in X$)
which are trivial for $\MM$,
as $\MM$ is idempotent.
Therefore, among all identification minors of $f$, there exists
a term $t$ such that $t$ is nontrivial for $\MM$
and $d=|X_t|$ is as small as possible.
Clearly, $d>1$.
We may assume without loss of generality that $X_t$ is exactly the set
of variables that occur in $t$.
Thus, $t=t(x_1,\dots,x_d)$ with $\{x_1,\dots,x_d\}=X_t$.
By the choice of $t$, the conditions defining
a minimal term for $\MM$
hold for $t$.

Now let $t=t(x_1,\dots,x_d)$ be any minimal
term for $\MM$.
Then $\Sigma\not\models t(x_1,\dots,x_d)\approx z$ for any variable $z$, but
$\Sigma\models t(y_1,\dots,y_d)\approx y$
for some variable $y$ whenever $|\{y_1,\dots,y_d\}|<d$.
Were $y\notin \{y_1,\dots,y_d\}$, we would get (by substituting $z$ for $y$)
$\Sigma\models y\approx z$ for distinct variables $y,z$, which contradicts
our assumption that $\MM$ is
satisfiable.
Thus $y=y_i$ for some $i$.
If $d\le3$, then the only possibilities,
up to permutations of variables,
are those described in (1)--(5).
If $d\ge4$, then by \'{S}wierczkowski's Lemma \cite{swier}, the only
possibility,
up to permutations of variables,
is (5).
\end{proof}

\section{Random Models}
\label{sec-rmodels}

Let $\MM=(\LL,\Sigma)$ where $\Sigma$ is a finite system of idempotent
linear identities in a finite language $\LL$.
For every finite set $A$, let $\ModA(\MM)$ denote the set of all
models of $\MM$ on~$A$. Clearly, $\ModA(\MM)$ is a finite set.
Motivated by our discussion in the introduction, we will
stipulate that every member of $\ModA(\MM)$ has the same probability,
so we get a discrete probability space on $\ModA(\MM)$
with uniform distribution.
Accordingly, for every property $P$ of algebras on $A$, the probability
that a random model $\al{A}$ of $\MM$ on $A$ has property $P$ is
\begin{equation}
  \label{eq-probabA}
\PrA(\text{$\al{A}$ has property $P$}):=
\frac{|\{\al{A}\in\ModA(\MM):\text{$\al{A}$ has property $P$}\}|}
     {|\ModA(\MM)|}.
\end{equation}
Note that we add a subscript $A$ to $\Pr$ to indicate the base set of the
models we are considering. In contrast, $\MM$ is suppressed
in the notation, because $\MM$ will usually be fixed and clear from the context
when we apply the notation $\Pr_A$.

We will call a property $P$ of algebras an \emph{abstract property}
(of finite algebras) if for any two isomorphic (finite)
algebras $\al{A}$ and $\al{B}$, $\al{A}$ has property
$P$ if and only if $\al{B}$ does.
It follows that if $A$, $B$ are finite sets of the same cardinality and
$P$ is an abstract property,
then in the probability spaces of all models of $\MM$
on $A$ and $B$, respectively, we have
\[
\PrA(\text{$\al{A}$ has property $P$})=
\PrB(\text{$\al{B}$ has property $P$}).
\]
Therefore, our main concept below is well-defined.

\begin{df}
  \label{df-probab}
  Let $P$ be an abstract property of finite algebras.
  We will say that \emph{a random finite model of $\MM$ has property $P$
  with probability $p$} if
\begin{equation}
  \label{eq-probab}
  p=\lim_{n\to\infty}\Prn(\text{$\al{A}$ has property $P$}).
\end{equation}
\end{df}  

Clearly, this limit is not affected by disregarding the
values of $\Prn(\text{$\al{A}$ has property $P$})$
on the right hand side for finitely many $n$'s.
Therefore, when computing these probabilities,
we may, and we often will, restrict
to models $\al{A}$ of $\MM$ whose universe
$[n]$ has large enough cardinality.

If every linear $\LL$-term is trivial for $\MM$ (i.e., $\Sigma$-equivalent to a
variable), then either $\MM$ is satisfiable and
$\MM$ has exactly one model of the form $\al{A}=\langle [n];\LL\rangle$
for every positive integer $n$, or $\MM$ is unsatisfiable and
$\MM$ has exactly
one model of the form $\al{A}=\langle [n];\LL\rangle$ for $n=1$ and none for
$n>1$.
Hence, every probability on the right hand side of \eqref{eq-probab}
is $0$ or $1$. This degenerate case is not interesting, and
excluding it from our considerations will make our theorems easier to state.
Therefore, for the rest of the paper we adopt the following assumption
on $\MM$:

\begin{glassumption}
  \label{global-assumption}
  We assume that $\MM=(\LL,\Sigma)$ where
  \begin{itemize}
  \item
    $\LL$ is a finite algebraic language and
  \item
    $\Sigma$ is a finite system of idempotent linear $\LL$-identities such that
  \item
    there exists a nontrivial linear $\LL$-term for $\MM$.
  \end{itemize}
\end{glassumption}

In this section our goal is to analyze the finite models of $\MM$, and show
how the operations of each such model can be constructed from a family
of independently chosen functions with certain symmetry properties.
The significance of this result is twofold: (i) it will provide an algorithm
for choosing, with probability $\frac{1}{|\Mod_A(\MM)|}$,
a random model of $\MM$ on a fixed finite set $A$; (ii) it will allow us
to do counting arguments to find probabilities of the form~\eqref{eq-probabA}.

The intuitive idea for this description of the finite models of $\MM$ is
quite simple. 
Every linear $\LL$-term $t$ induces a term operation
$t^{\al{A}}$ on the universe $A$ of every model $\al{A}$ of $\MM$,
which can be further restricted to subsets of the domain of $t^{\al{A}}$.
It turns out that for a well-chosen family $(t_i)$ of terms,
which depends on $\MM$,
the term operations $t_i^{\al{A}}$ of the models $\al{A}$ of $\MM$
restrict to appropriately chosen subsets
of their domains as independent functions $h_i$ with certain symmetries.
Conversely, any family $(h_i)$ of independently chosen
functions with these symmetry properties (see Definition~\ref{df-mfam})
gives rise to a model of $\MM$.
Unfortunately, nailing down all the details in complete generality is
somewhat tedious, therefore we start by illustrating the method with
an example.

\begin{exmp}
  \label{exmp:cmaltsev2}
  Let $\MM=(\LL,\Sigma)$, $X$, and $\EE$ 
  be as defined in Example~\ref{exmp:cmaltsev}, and let
  $A$ be a nonempty set. We want to describe how to construct all models
  $\al{A}=\langle A;f^{\al{A}}\rangle$ of $\MM$, equivalently,
  how to construct all operations $f^{\al{A}}$ on the set
  $A$ which obey the identities in $\Sigma$.
  We will use Table~\ref{fig:cmaltsev}, which shows the blocks
  $B_1,\dots,D_3$ of $\EE$,
  split into orbits $\{B_1,B_2,B_3\}$, $\{C_1,\ldots,C_6\}$, and
  $\{D_1,D_2,D_3\}$ of $S_X$.
  From this, we can read off all linear identities $r\approx s$
  in the variables $x,y,z$ which are consequences of
  $\Sigma$; namely, $r\approx s$ is such an identity if and only if $r$ and $s$
  are in the same block of $\EE$ (i.e., appear in the same row of the table).
  Each one of these identities forces the desired operation
  $f^{\al{A}}$ to satisfy a condition of the following form:
  \begin{itemize}
    \item
 ``$f^{\al{A}}$ applied to a triple $(a,b,c)\in A^3$ of some pattern
  has to equal $f^{\al{A}}$ applied to a triple $(a',b',c')\in A^3$ of
  another pattern'', or
    \item
  ``$f^{\al{A}}$ applied to a triple $(a,b,c)\in A^3$ of some pattern
  has to equal one of the arguments''.
  \end{itemize}    
  Identities that come from blocks of $\EE$ in the same orbit of $S_X$
  contain the same information, therefore we will choose and fix a transversal
  $\mathcal{C}$ for the orbits of $S_X$; say $\mathcal{C}:=\{B_1,C_1,D_1\}$.
  Let us also choose and fix a representative from each of these
  blocks; say we choose the term $t_{B_1}=t_{B_1}(x):=x$ from $B_1$,
  $t_{C_1}=t_{C_1}(x,y):=f(x,y,x)$
  from $C_1$, and $t_{D_1}=t_{D_1}(x,y,z):=f(x,y,z)$ from $D_1$.
  Thus, $\{t_{B_1},t_{C_1},t_{D_1}\}$ is a maximal family of essentially
  different linear terms for $\MM$.
  Notice also that $t_{B_1},t_{C_1},t_{D_1}$
  were chosen so that all of their variables are essential.
  
  First, we want to deduce some necessary conditions for $f^{\al{A}}$
  to obey the identities in $\Sigma$. So, suppose $f^{\al{A}}$ obeys the
  identities in $\Sigma$. Then it also obeys all identities that come from the
  blocks $B_1,C_1,D_1$. We can use these identities to express
  $f^{\al{A}}(a,b,c)$, for triples $(a,b,c)\in A^3$ of various patterns,
  in terms of
  three functions: $h_{B_1}:=t_{B_1}^{\al{A}}=x^{\al{A}}$ (the identity function on
  $A=A^{(1)}$),
  $h_{C_1}:=t_{C_1}^{\al{A}}\restr A^{(2)}$, and
  $h_{D_1}:=t_{D_1}^{\al{A}}\restr A^{(3)}$.
  Namely, we have
  \begin{equation}
    \label{eq-fh}
    f^{\al{A}}(a,b,c)=\begin{cases}
    h_{B_1}(a)=a & \text{if $a=b=c$ or $a\not=b=c$,}\\
    h_{B_1}(c)=c & \text{if $a=b\not=c$,}\\
    h_{C_1}(a,b) & \text{if $a=c\not=b$,}\\
    h_{D_1}(a,b,c) & \text{if $a\not=b\not=c\not=a$}.
    \end{cases}
  \end{equation}
  This show that if $f^{\al{A}}$ obeys the identities in $\Sigma$, then there
  exist functions
  \mbox{$h_{B_1}\colon A\to A$}, $h_{C_1}\colon A^{(2)}\to A$, and
  $h_{D_1}\colon A^{(3)}\to A$
  such that \eqref{eq-fh} holds and
  the functions $h_{B_1},h_{C_1},h_{D_1}$ satisfy the following conditions:
  \begin{enumerate}
  \item[(i)]
    $h_{B_1}$ is the identity function on $A=A^{(1)}$, and
  \item[(ii)]
    $h_{D_1}$ is invariant under permuting its first and third variables;
    or equivalently, $h_{D_1}$ is constant on the orbits of
    $G_{D_1}=\langle(x,z)\rangle$,
    as $G_{D_1}$ acts on $A^{(3)}$ by permuting coordinates.
  \end{enumerate}
  The last condition holds, because $f(z,y,x)\in D_1$ implies that
  the identity $t_{D_1}(x,y,z)=f(x,y,z)\approx f(z,y,x)=t_{D_1}(z,y,x)$ is
  entailed by $\Sigma$, so
  $t_{D_1}^{\al{A}}(a,b,c)=t_{D_1}^{\al{A}}(c,b,a)$ for all $(a,b,c)\in A^3$. 

  Conversely, we will now show that if we are given three functions
  $h_{B_1}\colon A\to A$, $h_{C_1}\colon A^{(2)}\to A^{(2)}$, and
  $h_{D_1}\colon A^{(3)}\to A^{(3)}$ satisfying conditions (i)--(ii) above, then
  the ternary operation $f^{\al{A}}$ defined by \eqref{eq-fh} obeys the
  identities in $\Sigma$. It is clear from the construction of $f^{\al{A}}$
  that it obeys the first identity, $f(x,x,y)\approx y$, in $\Sigma$.
  For the other identity, $f(x,y,z)\approx f(z,y,z)$, in $\Sigma$
  we can check
  \begin{equation}
    \label{eq-fsymm}
    f^{\al{A}}(a,b,c)=f^{\al{A}}(c,b,a)\quad \bigl((a,b,c)\in A^3\bigl)
  \end{equation}
  separately for each possible pattern of $(a,b,c)$.
  If $(a,b,c)\in A^{(3)}$, then the equality in \eqref{eq-fsymm}
  follows from the last
  line of the definition in \eqref{eq-fh} and property (ii) of $h_{D_1}$.
  If $a=c$, then the equality in \eqref{eq-fsymm} is trivial.
  Finally, if $a=b$ or $b=c$ (including the possibility that $a=b=c$),
  then the equality in \eqref{eq-fsymm} follows from the first two
  lines of the definition in \eqref{eq-fh}.

  This proves that there is a one-to-one correspondence
  between the models of $\al{A}$ of $\MM$ with universe $A$
  and the triples of functions
  $h_{B_1}\colon A\to A$, $h_{C_1}\colon A^{(2)}\to A$, and
  $h_{D_1}\colon A^{(3)}\to A$
  satisfying conditions (i)--(ii) above.
  Given such a triple of functions, the operation $f^{\al{A}}$ of $\al{A}$
  is obtained by formula \eqref{eq-fh}.
\qeddiamond
\end{exmp}

Now we give a precise description of the construction
of all models for any
$\MM=(\LL,\Sigma)$
satisfying Global Assumption~\ref{global-assumption}.
As in Example~\ref{exmp:cmaltsev2}, we will work with a maximal family
of essentially different linear terms for $\MM$,
where the terms have essential variables only.
Throughout this discussion we 
will use the notation and the facts established in
Theorem~\ref{thm-kelly},
Lemmas~\ref{lm-ess-vars}--\ref{lm-symm}, and
Definition~\ref{df-ess-vars_symm} without further reference.

Let $X=\{x_1,\dots,x_m\}$ be a large enough set of variables for $\MM$
where $x_1,\dots,x_m$ are all distinct, and the subscripts $1,\dots,m$
fix an ordering of these variables. 
Furthermore, let
$\mathcal C$ be a transversal for the $S_X$-orbits of equivalence
classes of
$\EE:=\EE_X^{\MM}$ such that for each $C\in \mathcal C$,
the set of essential variables of the terms in $C$ is
$X_C=\{x_1,\dots,x_{m_C}\}$.
Clearly, such a transversal exists, and since $\MM$ is idempotent,
there is a unique $C\in\mathcal{C}$
with $m_C=1$, namely the $\EE$-class containing the term $x_1$.
Now choose for every $C\in\mathcal{C}$ a term
$t_C=t_C(x_1,\dots,x_{m_C})$ in $C$ so that $t_C$ 
includes precisely
the variables in $X_C$
(i.e., all variables of $t_C$ are essential).
Moreover, assume that for the unique $C\in\mathcal{C}$ with $m_C=1$
we choose $t_C=x_1$.

We wish to argue that every
model $\al{A}$ of $\MM$
is determined by the
following indexed family of functions:
\begin{equation}
  \label{eq-term_mfam}
  (t_C^{\al A}\restr{A^{(m_C)}}:C\in \mathcal{C}).
\end{equation}  
Note that the indexed family~\eqref{eq-term_mfam}
depends on the choice of $\mathcal{C}$,
however, it does not depend on the choice of the terms
$t_C(x_1,\dots,x_{m_C})\in C$, because for any other term
$t'_C(x_1,\dots,x_{m_C})\in C$ (whose variables are all essential)
we have that
$\Sigma\models t_C(x_1,\dots,x_{m_C})\approx t'_C(x_1,\dots,x_{m_C})$.

Let $\al{A}$ be a model of $\MM$. To show that $\al{A}$ is determined by
the family~\eqref{eq-term_mfam},  
let $f\in \LL$ be a $d$-ary operation symbol. A
complete specification of $f^{\al A}$ can be obtained by defining it
separately on each
set $A^{(\mu)}$ as $\mu $ ranges over all patterns on $[d]$.
Let $\mu $ be a
pattern and choose any
$d$-tuple $(z_1,\dots,z_d)$ of variables in $X$ 
such that
$(z_1,\dots,z_d)$ has pattern $\mu $.
The linear term $f(z_1,\dots,z_d)$
lies in some equivalence class of~$\EE$,
and hence it lies in the $S_X$-orbit
of exactly one $C\in\mathcal{C}$. 
Thus, $f\bigl(\gamma(z_1),\dots,\gamma(z_d)\bigr) \in C$
for some $\gamma\in S_X$.
Since $(z_1,\dots,z_d)$ and $\bigl(\gamma(z_1),\dots,\gamma(z_d)\bigr)$
have the same pattern, we may assume without loss of generality that
$(z_1,\dots,z_d)$ was chosen so that
$f(z_1,\dots,z_d) \in C\in \mathcal C$.
Thus
\begin{equation*}
  \Sigma \vDash
f(z_1,\dots,z_d)
\approx t_C(x_1,\dots,x_{m_C}).
\end{equation*}
Note that by the definition of $X_C$ each of the $x_i$'s must appear
in the list
$(z_1,\dots,z_d)$.
Thus there is a function
$\sigma\colon [m_C] \to [d]$
with
$x_i=z_{\sigma(i)}$ for all $i\in [m_C]$. 
Since $\al{A}$ is a model of $\Sigma$,
it must be the case that for every $(a_1,\dots,a_d) \in A^{(\mu)}$ we have
that $f^{\al A}(a_1,\dots,a_d) = t_{C}^{\al A}(a_{\sigma(1)},\dots,a_{\sigma(m_C)})$. 

\begin{df}
  \label{df-mfam}
For a fixed $\mathcal{C}$ and for any
nonempty set $A$, let
us call an indexed family
$(h_C : C\in \mathcal C)$
of functions an \emph{$\MM$-family on $A$}
(suppressing reference to the choice of $\mathcal{C}$, for simplicity)
if,  for each $C\in \mathcal C$
\begin{enumerate}
\item
  $h_C$ is a function $A^{(m_C)} \to A$;
\item
$h_C$ is invariant under all permutations $\pi\in G_C$ of its
variables $x_1,\dots,x_{m_C}$;
equivalently,
$h_C$ is constant on the orbits of the symmetry group $G_C=G_{t_C}$
of $t_C$ as $G_C$ acts on $A^{(m_C)}$
by permuting coordinates;
\item if $m_C=1$ then
  $h_C(a_1)=a_1$ for all $a_1\in A^{(1)}=A$.
\end{enumerate}
\end{df}

The discussion preceding Definition~\ref{df-mfam} and the idempotence of $\MM$
imply that, if
$\al{A}$ is a model of $\MM$
then the indexed family $(h_C: C\in\mathcal{C})$ of functions defined by
$h_C:=t^{\al A}_C\restr{A^{(m_C)}}$ for every $C\in\mathcal{C}$ 
is an $\MM$-family
on $A$.
Moreover, the operations of $\al{A}$ can be obtained from this $\MM$-family
as follows:

\addtocounter{equation}{1}

\begin{equation*}
  \tag{\theequation}\label{eq-fs_from_hs}
\text{%
\begin{minipage}[t]{.85\hsize}
  For every $f\in \LL$ with
  arity $d$, for
  every pattern $\mu$ on $[d]$, and for
  every $(a_1,\dots,a_d) \in A^{(\mu)}$ we have
  \begin{equation*}
    f^{\al A}(a_1,\dots,a_d) =
    h_C(a_{\sigma(1)},\dots,a_{\sigma(m_C)})
  \end{equation*}
  where $C$ is the unique member of $\mathcal{C}$ such that
  $f(z_1,\dots,z_d)\in C$ for some tuple $(z_1,\dots,z_d)$
  of variables with pattern $\mu$, and
  $t_C(z_{\sigma(1)},\dots,z_{\sigma(m_C)})$ is the chosen representative
  of $C$ with $X_C=\{x_1,\dots,x_{m_C}\}$ and $x_i=z_{\sigma(i)}$ for all
  $i\in [m_C]$.
\end{minipage}
}
\end{equation*}  

\medskip
\noindent
To summarize, every model $\al{A}$ of $\MM$ is determined by the $\MM$-family
  \eqref{eq-term_mfam}
associated to $\al{A}$. 

We now consider the converse: every \MM-family induces an algebraic
structure on its underlying set. Moreover, that algebra will be a model of \MM. This is
the content of the following theorem.

\begin{thm}
  \label{thm-alg-descr}
  Let $X=\{x_1,\dots,x_m\}$
  be a set of $m$ distinct variables
  that is large enough for~$\MM$, and
choose a transversal $\mathcal{C}$ for the $S_X$-orbits of equivalence classes
of $\EE=\EE^\MM_X$ such that for each $C\in\mathcal{C}$ the set of
essential variables is $X_C=\{x_1,\dots,x_{m_C}\}$.
Furthermore, for each $C\in\mathcal{C}$, choose a term
$t_C=t_C(x_1,\dots,x_{m_C})$ in $C$ with essential variables only.
Then the following hold for every nonempty set $A$.
\begin{enumerate}
\item[{\rm(1)}]
The mapping
\begin{equation}
  \label{eq-model_mfam}
\al{A}\mapsto(t^{\al A}_C\restr{A^{(m_C)}}:C\in\mathcal{C})
\end{equation}
is a one-to-one correspondence between the
models of $\MM$ with universe $A$
and the $\MM$-families
of functions on $A$.
\item[{\rm(2)}]
  For any $\MM$-family $(h_C:C\in\mathcal{C})$ on $A$,
  the member functions $h_C$ with $m_C>1$
    can be chosen independently.
\item[{\rm(3)}]
  If $h_C\colon A^{(m_C)}\to A$ is a member function 
  of an $\MM$-family $(h_C:C\in\mathcal{C})$ on $A$
  such that $m_C>1$, then
  \begin{enumerate}
  \item[{\rm(i)}]
    $h_C$ is a disjoint union
     \[
     h_C=\bigcup\bigl(h_C\restr{D^{(m_C)}}:D\subseteq A,\ |D|=m_C\bigr)
     \]
     of its restrictions $h_C\restr{D^{(m_C)}}\colon D^{(m_C)}\to A$
     to the subsets $D^{(m_C)}$ of $A^{(m_C)}$ where $|D|=m_C$;
  \item[{\rm(ii)}]
    these restrictions $h_C\restr{D^{(m_C)}}$
    can be chosen independently; and
  \item[{\rm(iii)}]
    each such restriction $h_C\restr{D^{(m_C)}}$ is constant on the
    orbits of the symmetry group
    $G_C=G_{t_C}$ of $t_C$ (as $G_{C}$
    acts on $D^{(m_C)}$
    by permuting coordinates), and is otherwise arbitrary.
  \end{enumerate}  
  \end{enumerate}  
\end{thm}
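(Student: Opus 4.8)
The plan is to prove the three parts in order, leveraging the discussion that immediately precedes the theorem statement. Most of the hard work has actually already been done in that discussion, so the proof is largely a matter of organizing those observations and verifying the converse direction carefully.

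\medskip

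\noindent\textbf{Part (1).} First I would recall that the discussion preceding Definition~\ref{df-mfam} already establishes that the mapping \eqref{eq-model_mfam} sends a model $\al A$ of $\MM$ to a genuine $\MM$-family, and that $\al A$ is determined by this family via formula \eqref{eq-fs_from_hs} — so the map is well-defined and injective. It remains to prove surjectivity: given an arbitrary $\MM$-family $(h_C:C\in\mathcal C)$ on $A$, I would \emph{define} operations $f^{\al A}$ for each $f\in\LL$ by the right-hand side of \eqref{eq-fs_from_hs}, and then check (a) that this is well-defined (independent of the choices of the tuple $(z_1,\dots,z_d)$ of pattern $\mu$ and of the permutation $\sigma$ realizing $X_C$ inside it), and (b) that the resulting algebra $\al A$ is a model of $\MM$, i.e.\ satisfies every identity in $\Sigma$. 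For (a), if $(z_1,\dots,z_d)$ and $(z_1',\dots,z_d')$ are two tuples of pattern $\mu$, there is $\gamma\in S_X$ with $z_i'=\gamma(z_i)$; since $f(z_1,\dots,z_d)$ and $f(z_1',\dots,z_d')$ then lie in the same $S_X$-orbit, they point to the same $C\in\mathcal C$, and Lemma~\ref{lm-symm} (the coset ambiguity being absorbed by $G_C$, under which $h_C$ is invariant by clause~(2) of Definition~\ref{df-mfam}) shows the two recipes agree. For (b), I would verify that for any identity $s\approx t$ in $\Sigma$ and any assignment of elements of $A$ to variables, both sides evaluate to $h_C(\dots)$ for the \emph{same} $C$ (the common $\EE$-class of $s$ and $t$) applied to the same arguments — this is exactly the content of \eqref{eq-fs_from_hs} together with the fact that $s\equiv^\MM_X t$ forces $\EE[s]=\EE[t]$. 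Finally, checking that composing the two maps both ways yields the identity is routine: starting from a model, $t_C^{\al A}$ restricted appropriately is $h_C$ by idempotence and the definition of $t_C$; starting from an $\MM$-family, formula \eqref{eq-fs_from_hs} applied to the term $t_C$ itself recovers $h_C$.

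\medskip

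\noindent\textbf{Part (2).} Here I would argue that the only constraints linking different member functions of an $\MM$-family are vacuous. By Definition~\ref{df-mfam}, the function $h_C$ for the unique class with $m_C=1$ is forced to be the identity on $A$, so it contributes nothing. For the classes $C$ with $m_C>1$, the three defining conditions of an $\MM$-family each involve only the single function $h_C$ (condition (1) specifies its domain and codomain, condition (2) is an internal symmetry, condition (3) does not apply). Hence no condition relates $h_C$ to $h_{C'}$ for distinct $C,C'\in\mathcal C$, and the functions $h_C$ with $m_C>1$ may be chosen independently.

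\medskip

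\noindent\textbf{Part (3).} Fix $C$ with $m_C>1$. For (i), note that $A^{(m_C)}$ — the set of $m_C$-tuples of \emph{distinct} elements of $A$ — is the disjoint union, over all $m_C$-element subsets $D\subseteq A$, of the sets $D^{(m_C)}$ of tuples whose underlying set is exactly $D$; so $h_C$ is literally the disjoint union of its restrictions $h_C\restr{D^{(m_C)}}$. For (ii) and (iii): the only defining condition on $h_C$ that could couple two different blocks $D^{(m_C)}$ and $D'^{(m_C)}$ would be condition (2) of Definition~\ref{df-mfam}, but $G_C\le S_{X_C}$ acts on $A^{(m_C)}$ by permuting \emph{coordinates}, which preserves the underlying set of each tuple — so each orbit of $G_C$ is contained in a single block $D^{(m_C)}$. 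Therefore condition (2) decomposes into the separate requirements that each $h_C\restr{D^{(m_C)}}$ be constant on $G_C$-orbits, and subject only to that, the restrictions are arbitrary and independent.

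\medskip

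\noindent The step I expect to require the most care is the well-definedness and model-verification in Part~(1): one must be scrupulous that the $S_X$-orbit bookkeeping (which $\gamma$, which $\sigma$, which representative $t_C$) all cancels correctly, with every residual ambiguity living inside the symmetry group $G_C$ under which $h_C$ is invariant. The other parts are essentially bookkeeping once one observes that coordinate permutations fix the underlying set of a tuple.
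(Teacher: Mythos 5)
Your proposal is correct in outline and follows the same overall strategy as the paper: the forward direction and injectivity are read off from the discussion preceding Definition~\ref{df-mfam}, surjectivity is proved by defining the operations via \eqref{eq-fs_from_hs} and checking well-definedness (your step (a) is exactly the paper's Claim~\ref{clm-welldef}, with the ambiguity absorbed by $G_C$-invariance of $h_C$), and parts (2)--(3) follow from the observation that the defining conditions on an $\MM$-family do not couple distinct $h_C$'s, and that $G_C$-orbits lie within single blocks $D^{(m_C)}$ because coordinate permutations preserve the underlying set of a tuple. The one place where your organization genuinely differs, and where your phrasing needs repair, is the verification that the constructed algebra is a model. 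You assert that for \emph{any} assignment both sides of $s\approx t$ evaluate to $h_C(\dots)$ for ``the common $\EE$-class of $s$ and $t$''; this is only true for assignments that are injective on the variables of $s\approx t$. For a non-injective assignment $\alpha=\beta\circ\gamma$ (with $\beta$ injective), the pattern governing \eqref{eq-fs_from_hs} is the pattern of the \emph{element} tuple, so the relevant class is the one containing the identification minors $s[\gamma]$ and $t[\gamma]$ --- equal to each other because $\equiv^\MM_X$ is closed under variable substitution, but generally different from $\EE[s]=\EE[t]$ --- and one must still reconcile the two argument tuples via Lemma~\ref{lm-symm} and $G_C$-invariance. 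The paper sidesteps this by first reducing to identities of the form $r\approx t_C$ (Claim~\ref{clm-id-reduction}) and then inducting on the number of distinct variables, so that non-injective assignments are handled by invoking an already-verified identity in fewer variables; your direct one-pass argument can be made to work, but only after inserting the substitution-closure step and redoing the orbit bookkeeping for $s[\gamma]$ and $t[\gamma]$, which is exactly the content your sketch currently elides.
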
  

\begin{proof}
We start with the proof of statement (1).
In our discussion that led up to the statement of
Theorem~\ref{thm-alg-descr} we proved that \eqref{eq-model_mfam}
is a one-to-one mapping that assigns an $\MM$-family on $A$
to each model of $\MM$ with universe $A$.
It remains to show that this mapping  is onto.

So, let $(h_C: C\in \mathcal C)$ be an $\MM$-family on $A$, and for each $f\in\LL$ with
arity $d$ define an operation $f^{\al A}$ on $A$ as described in $(\dagger)$.  Then
$f^{\al{A}}$ is defined on the whole set $A^d$, because for every pattern $\mu$ on $[d]$,
the required objects $C$, $f(z_1,\dots,z_d)$, $\sigma$, and
$t_C(z_{\sigma(1)},\dots,z_{\sigma(m_C)})$ exist.  To see that $f^{\al{A}}$ is
well-defined, note that $C$ is uniquely determined by
$f$ and the pattern $\mu$ on $[d]$, but
there might be multiple choices for $f(z_1,\dots,z_d)$. Therefore we need to show that the
definition of $f^{\al{A}}\restr A^{(\mu)}$ does not depend on the choice of
$f(z_1,\dots,z_d)$.  To this end, the following claim will be useful.

\begin{clm}
\label{clm-welldef}
Let $f\in\LL$ be an operation symbol with arity $d$,
let $\mu$ be a pattern on $[d]$, and let
$C$ be the unique member of $\mathcal{C}$ such that
$f(z_1,\dots,z_d)\in C$ for some tuple $(z_1,\dots,z_d)$
of variables with pattern $\mu$.
Furthermore, let $\sigma$ be a function $[m_C]\to[d]$
such that $x_i=z_{\sigma(i)}$ for all $i\in [m_C]$.
If $(w_1,\dots,w_d)$ is another tuple of variables
with pattern $\mu$ such that 
$f(w_1,\dots,w_d)\in C$, and
$\tau$ is a function $[m_C]\to[d]$
such that $x_i=w_{\tau(i)}$ for all $i\in [m_C]$, then
\begin{enumerate}
\item
  $\{z_{\tau(i)}:i\in[m_C]\}=X_C$,
\item
  the map $\pi\colon X_C\to X_C$ defined by $x_i=z_{\sigma(i)}\mapsto z_{\tau(i)}$
  for every $i\in [m_C]$ is a permutation of $X_C$, and
\item
  $\pi\in G_C$; or equivalently,
  \begin{equation}
  \label{eq-tc}
  \Sigma\models t_C(x_1,\dots,x_{m_C})=t_C(z_{\sigma(1)},\dots,z_{\sigma(m_C)})\approx
  t_C(z_{\tau(1)},\dots,z_{\tau(m_C)}).
  \end{equation}
\end{enumerate}
\end{clm}  

\begin{proof}[Proof of Claim~\ref{clm-welldef}]
Let $f$, $\mu$, $C$, $(z_1,\dots,z_d)$, $(w_1,\dots,w_d)$,
$\sigma$, and $\tau$ satisfy the assumptions of the claim.
The terms  
$f(z_1,\dots,z_d)$, $f(w_1,\dots,w_d)$, and $t_C(x_1,\dots,x_{m_C})$
all belong to $C$, therefore
\begin{align}
  \Sigma\models{}
  & f(z_1,\dots,z_d)\approx t_C(z_{\sigma(1)},\dots,z_{\sigma(m_C)})
                      \quad\text{and}\label{eq-one}\\
  \Sigma\models{}
  & f(w_1,\dots,w_d)\approx t_C(w_{\tau(1)},\dots,w_{\tau(m_C)}).\label{eq-two}
\end{align}
Since $(z_1,\dots,z_d)$ and $(w_1,\dots,w_d)$ have the same pattern,
the map $\{w_1,\dots,w_d\}\to\{z_1,\dots,z_d\}$ defined by $w_j\mapsto z_j$
for all $j\in [d]$ is a bijection.
Hence, by changing variables in the
identity in \eqref{eq-two}, we get that
\begin{equation}
  \label{eq-three}
  \Sigma\models f(z_1,\dots,z_d)\approx t_C(z_{\tau(1)},\dots,z_{\tau(m_C)}).
\end{equation}
Now \eqref{eq-one} and \eqref{eq-three} imply that
\eqref{eq-tc} holds, where the equality follows from the fact that
$x_i=z_{\sigma(i)}$ for all $i\in [m_C]$.
In particular, \eqref{eq-tc} yields that
$t_C(z_{\tau(1)},\dots,z_{\tau(m_C)})\in C$.
Moreover, since $X_C=\{x_1,\dots,x_{m_C}\}$
is the set of essential variables of every term in $C$, we also get that
$\{z_{\tau(1)},\dots,z_{\tau(m_C)}\}=X_C$. Consequently, the map
$x_i=z_{\sigma(i)}\mapsto z_{\tau(i)}$ ($i\in [m_C]$) is a permutation of $X_C$.
These considerations prove statements (1) and (2) of the claim,
and also the displayed
line \eqref{eq-tc}
in statement (3). The fact that \eqref{eq-tc} 
is equivalent to $\pi\in G_C$ follows from the last statement in
Lemma~\ref{lm-symm}.
\qedsymbdiamond
\end{proof}

We return to proving that the operations of $\al{A}$ are well-defined,
that is, if $f\in\LL$ has arity $d$, $\mu$ is a pattern on
$[d]$, and $C$ is the unique member of $\mathcal{C}$ which contains
$f(z_1,\dots,z_d)$ for some tuple $(z_1,\dots,z_d)$ of variables with pattern
$\mu$, then the definition of $f^{\al{A}}\restr A^{(\mu)}$ by $(\dagger)$
is independent of
the choice of the term $f(z_1,\dots,z_d)$ (and the function $\sigma$).
So, let $(z_1,\dots,z_d)$ and $(w_1,\dots,w_d)$ be two such tuples
of variables, and let $\sigma$ and $\tau$ be the associated functions
required in $(\dagger)$.
Thus, $f$, $\mu$, $C$, $(z_1,\dots,z_d)$, $(w_1,\dots,w_d)$,
$\sigma$, and $\tau$ satisfy the assumptions of Claim~\ref{clm-welldef}.
If we define $f^{\al{A}}\restr A^{(\mu)}$ using the term
$f(z_1,\dots,z_d)$, we get that
\begin{equation}
  \label{eq-firstdef}
f^{\al{A}}(a_1,\dots,a_d):=h_C(a_{\sigma(1)},\dots,a_{\sigma(m_C)})
\quad \text{for all $(a_1,\dots,a_d)\in A^{(\mu)}$},
\end{equation}
while if we define $f^{\al{A}}\restr A^{(\mu)}$ using the term
$f(w_1,\dots,w_d)$, we get that
\begin{equation}
  \label{eq-seconddef}
f^{\al{A}}(a_1,\dots,a_d):=h_C(a_{\tau(1)},\dots,a_{\tau(m_C)})
\quad \text{for all $(a_1,\dots,a_d)\in A^{(\mu)}$}.
\end{equation}
By Claim~\ref{clm-welldef}, the assignment defined by
$x_i=z_{\sigma(i)}\mapsto z_{\tau(i)}$ for all $i\in [m_C]$
defines a permutation $\pi$ of the set $X_C=\{x_1,\dots,x_{m_C}\}$
of variables of $h_C$, and $\pi\in G_C$.
Therefore,
the tuples $(a_{\sigma(1)},\dots,a_{\sigma(m_C)}),\,
(a_{\tau(1)},\dots,a_{\tau(m_C)})\in A^{(m_C)}$
are in the same orbit of $G_C$ (as $G_C$ acts by permuting coordinates).
Hence, by condition~(2) in the definition of an $\MM$-family
(Definition~\ref{df-mfam}), we get that
the right hand sides of the
equalities in \eqref{eq-firstdef} and \eqref{eq-seconddef} are equal.
This finishes the proof that the operations
of $\al{A}$ are well-defined.

Next we argue that the $\MM$-family \eqref{eq-term_mfam}
associated to $\al{A}$ coincides with $(h_C:C\in\mathcal{C})$.
Let $C\in\mathcal{C}$. Then
$t_C(x_1,\dots,x_{m_C})=f(x_{\phi(1)},\dots,x_{\phi(d)})$ for some
$f\in\LL$ of arity $d$ and some onto function
$\phi\colon[d]\to[m_C]$.
Let $\mu$ denote the kernel of $\phi$, and let
$\sigma\colon[m_C]\to[d]$ be any right inverse of $\phi$.
Hence, $(x_{\phi(1)},\dots,x_{\phi(d)})$ is a tuple of variables with pattern $\mu$
such that $f(x_{\phi(1)},\dots,x_{\phi(d)})\in C$.
Since for
every $m_C$-tuple $(a_1,\dots,a_{m_C})\in A^{(m_C)}$ the
$d$-tuple $(a_{\phi(1)},\dots,a_{\phi(d)})$ also has pattern $\mu$,
we get that
\begin{align*}
t_C^{\al{A}}(a_1,\dots,a_{m_C})
=f^{\al{A}}(a_{\phi(1)},\dots,a_{\phi(d)})
={} & h_C(a_{\phi(\sigma(1))},\dots,a_{\phi(\sigma(m_C))})\\
={} & h_C(a_1,\dots,a_{m_{C}}),
\end{align*}
where the second equality is a consequence of the definition of $f^{\al{A}}$.
This proves the desired equality $t_C^{\al{A}}\restr A^{(m_C)}=h_C$.

It remains to prove that $\al{A}$ is a model of $\MM$.
We will argue that $\al{A}$ satisfies every linear identity
$r\approx s$ (with variables in $X$) such that $\Sigma\models r\approx s$.
Let $\bar{\Sigma}$ denote the set of all these identities.
Hence, $r\approx s\in\bar{\Sigma}$ iff $(r,s)\in\EE$. 
Let $\bar{\Sigma}_{\mathcal{C}}$ denote the set of all identities
$r\approx s\in\bar{\Sigma}$ where $s=t_C(x_1,\dots,x_{m_C})$ for some
$C\in\mathcal{C}$.
For every positive integer $k\le m\,(=|X|)$, let
$\bar{\Sigma}(k)$ denote the set of all identities in $\bar{\Sigma}$
with variables in $\{x_1,\dots,x_k\}$, and let
$\bar{\Sigma}_{\mathcal{C}}(k):=\bar{\Sigma}_{\mathcal{C}}\cap\bar{\Sigma}(k)$.
Clearly,
\begin{align*}
&\bar{\Sigma}(1)
\subseteq\bar{\Sigma}(2)
\subseteq\dots
\subseteq\bar{\Sigma}(m-1)
\subseteq\bar{\Sigma}(m)
=\bar{\Sigma}\quad\text{and}\\
&\bar{\Sigma}_{\mathcal{C}}(1)
\subseteq\bar{\Sigma}_{\mathcal{C}}(2)
\subseteq\dots
\subseteq\bar{\Sigma}_{\mathcal{C}}(m-1)
\subseteq\bar{\Sigma}_{\mathcal{C}}(m)
=\bar{\Sigma}_{\mathcal{C}}.
\end{align*}

\begin{clm}
  \label{clm-id-reduction}
  The following conditions on $\al{A}$ are equivalent for every
  $k\in [m]$:
  \begin{enumerate}
  \item[{\rm(a)}]
    $\al{A}$ satisfies all identities in $\bar{\Sigma}(k)$;
  \item[{\rm(b)}]
    $\al{A}$ satisfies all identities in $\bar{\Sigma}_{\mathcal{C}}(k)$.
  \end{enumerate}
\end{clm}  

\begin{proof}[Proof of Claim~\ref{clm-id-reduction}]
The implication (a) $\Rightarrow$ (b) is obvious, since
$\bar{\Sigma}_{\mathcal{C}}(k)\subseteq\bar{\Sigma}(k)$.

To prove that (b) $\Rightarrow$ (a), assume (b) holds, and let
$r\approx s\in\bar{\Sigma}(k)$.
Then $r$ and $s$ belong to the same $\EE$-class, so
by renaming variables if necessary (i.e., by
replacing $r\approx s$ with $r[\gamma]\approx s[\gamma]$
for some permutation $\gamma\in S_X$)
we get an identity $r'\approx s'\in\bar{\Sigma}$ such that
$r',s'\in C$ for some $C\in\mathcal{C}$. Clearly,
$r\approx s$ holds in $\al{A}$ iff $r'\approx s'$ does.
Since $r',s'$, and
$t_C=t_C(x_1,\dots,x_{m_C})$ are in the same $\EE$-class $C$,
the identities $r'\approx t_C$ and $s'\approx t_C$ both
belong to $\bar{\Sigma}_{\mathcal{C}}$. We also have
that $x_1,\dots,x_{m_C}$ are essential variables of all terms in $C$,
including $r'$ and $s'$, which implies that $m_C\le k$.
Therefore the identities $r'\approx t_C$ and $s'\approx t_C$ both
belong to $\bar{\Sigma}_{\mathcal{C}}(k)$, and hence hold in $\al{A}$
by our assumption.
It follows that $r'\approx s'$ also holds in $\al{A}$, and
therefore so does $r\approx s$.
\qedsymbdiamond  
\end{proof}
  
By Claim~\ref{clm-id-reduction}, to show that $\al{A}$ satisfies all
identities in $\bar{\Sigma}=\bar{\Sigma}(m)$, it suffices to prove,
by induction on $k$, that
$\al{A}$ satisfies all identities in
$\bar{\Sigma}_{\mathcal{C}}(k)$ for $k=1,\dots,m$.
If $k=1$ and $r\approx t_C\in \bar{\Sigma}_{\mathcal{C}}(1)$,
then $m_C=1$, $t_C=x_1$, and
$r=f(x_1,\dots,x_1)$ for some $f\in\LL$ with arity $d$.
Hence, for every $a\in A$,
\[
t_C^{\al{A}}(a)=a
\quad\text{and}\quad
r^{\al{A}}(a)=f^{\al{A}}(a,\dots,a)=h_C(a)=a,
\]
where the last equality follows from property~(3) of $\MM$-families
(see Definition~\ref{df-mfam}).
This shows that the identity $r\approx t_C\in\bar{\Sigma}(1)$ holds in
$\al{A}$.

Now let $k>1$, let $r\approx t_C\in \bar{\Sigma}_{\mathcal{C}}(k)$,
and assume that $\al{A}$ satisfies
every identity in $\bar{\Sigma}_{\mathcal{C}}(k-1)$.
Hence, by Claim~\ref{clm-id-reduction},
$\al{A}$ satisfies
every identity in $\bar{\Sigma}(k-1)$ as well.
Our goal is to show that the identity $r\approx t_C$ holds in $\al{A}$.
By the induction hypothesis, there is nothing to prove if
$r\approx t_C\in \bar{\Sigma}_{\mathcal{C}}(k-1)$, so we will assume that
$r\approx t_C\in \bar{\Sigma}_{\mathcal{C}}(k)\setminus\bar{\Sigma}(k-1)$.
Then the variables occurring in $r$ are exactly $x_1,\dots,x_k$, and the
identity $r\approx t_C$ has the form
$r(x_1,\dots,x_k)\approx t_C(x_1,\dots,x_{m_C})$ with $m_C\le k$.
We need to show that for all $(a_1,\dots,a_k)\in A^k$,
\begin{equation}
  \label{eq-a_j}
  r^{\al{A}}(a_1,\dots,a_k)= t_C^{\al{A}}(a_1,\dots,a_{m_C}).
\end{equation}

If $(a_1,\ldots,a_k)\notin A^{(k)}$, that is, $a_1,\dots,a_k$ are
not all distinct, then there exists a function $\psi\colon [k]\to[k]$
such that $\psi$ is not onto and 
$(a_1,\dots,a_k)=(a_{\psi(1)},\dots,a_{\psi(k)})$; 
hence also $(a_1,\dots,a_{m_C})=(a_{\psi(1)},\dots,a_{\psi(m_C)})$.
The identity
\begin{equation}
  \label{eq-identity-psi}
r(x_{\psi(1)},\dots,x_{\psi(k)})\approx t_C(x_{\psi(1)},\dots,x_{\psi(m_C)}) 
\end{equation}
is obtained from $r\approx t_C\in\bar{\Sigma}$ by variable substitution,
so is also lies in $\bar{\Sigma}$.
Furthermore, \eqref{eq-identity-psi}
contains at most $|\psi([k])|\le k-1$ variables, therefore
it differs from an identity in $\bar{\Sigma}(k-1)$ by renaming
variables only.
Hence the induction hypothesis forces \eqref{eq-identity-psi} to hold in
$\al{A}$. Using this fact in the second equality below we conclude that
\begin{equation*}
  r^{\al{A}}(a_1,\dots,a_k)
  = r^{\al{A}}(a_{\psi(1)},\dots,a_{\psi(k)})
  = t_C^{\al{A}}(a_{\psi(1)},\dots,a_{\psi(m_C)})
  = t_C^{\al{A}}(a_1,\dots,a_{m_C}).
\end{equation*}
This proves \eqref{eq-a_j} in the case $(a_1,\dots,a_k)\notin A^{(k)}$.

Now let $(a_1,\dots,a_k)\in A^{(k)}$. 
The term $r=r(x_1,\dots,x_k)$ has the form $r=f(x_{\phi(1)},\dots,x_{\phi(d)})$
for some $f\in\LL$ with arity $d$ and some onto function
$\phi\colon [d]\to[k]$.
Recall that since $r\approx t_C\in\bar{\Sigma}$, we have that
$f(x_{\phi(1)},\dots,x_{\phi(d)})=r\in C$. 
Let $\mu$ denote the kernel of $\phi$, and let
$\sigma\colon [k]\to [d]$ be any right inverse of $\phi$.
Hence, $(x_{\phi(1)},\dots,x_{\phi(d)})$ is a tuple of variables with pattern $\mu$
such that $f(x_{\phi(1)},\dots,x_{\phi(d)})\in C$.
Since for the $k$-tuple $(a_1,\dots,a_k)\in A^{(k)}$ the
$d$-tuple $(a_{\phi(1)},\dots,a_{\phi(d)})$ also has pattern $\mu$,
we get that
\begin{align*}
r^{\al{A}}(a_1,\dots,a_k)
=f^{\al{A}}(a_{\phi(1)},\dots,a_{\phi(d)})
={} & h_C(a_{\phi(\sigma(1))},\dots,a_{\phi(\sigma(m_C))})\\
={} & h_C(a_1,\dots,a_{m_{C}}),
\end{align*}
where the second equality is a consequence of the definition of $f^{\al{A}}$.
We also have
\begin{equation*}
t_C^{\al{A}}(a_1,\dots,a_{m_C})=h_C(a_1,\dots,a_{m_C}),
\end{equation*}  
because we established earlier in this proof that
$h_C=t_C^{\al{A}}\restr A^{(m_C)}$.
Thus, \eqref{eq-a_j} holds for tuples $(a_1,\dots,a_k)\in A^{(k)}$ as well,
which
finishes the proof that $r\approx t_C\in\bar{\Sigma}(k)$ holds in $\al{A}$.
The proof of statement (1) is now complete.

Statements~(2)--(3) follow 
immediately from the definition of an $\MM$-family
(see Definition~\ref{df-mfam})
and from the fact that
for each $C\in\mathcal{C}$ the subsets $D^{(m_C)}$ of $A^{(m_C)}$
for different $m_C$-element subsets $D$ of $A$ are disjoint.
\end{proof}

\begin{cor}
  \label{cor-models_exist}
$\MM$ has models with universe $A$ for every nonempty set $A$.
\end{cor}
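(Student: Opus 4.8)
The plan is to obtain this immediately from part~(1) of Theorem~\ref{thm-alg-descr}. That result sets up, for every nonempty set $A$, a one-to-one correspondence between the models of $\MM$ with universe $A$ and the $\MM$-families of functions on $A$, once we have fixed a large enough set $X=\{x_1,\dots,x_m\}$, a transversal $\mathcal{C}$ with $X_C=\{x_1,\dots,x_{m_C}\}$, and representatives $t_C$ as in the statement of that theorem. Hence it suffices to exhibit, for each nonempty $A$, at least one $\MM$-family on $A$.

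To build one, I would first fix an element $a_0\in A$ (possible because $A\ne\emptyset$) and then define $h_C\colon A^{(m_C)}\to A$ for each $C\in\mathcal{C}$ by the rule: $h_C=\id_A$ for the unique $C$ with $m_C=1$ (the $\EE$-class of $x_1$, for which $A^{(1)}=A$), and $h_C\equiv a_0$, the constant function, for every $C$ with $m_C>1$ (this is the empty function when $|A|<m_C$, which causes no trouble). Verifying that $(h_C:C\in\mathcal{C})$ is an $\MM$-family in the sense of Definition~\ref{df-mfam} is then routine: condition~(1) is clear, condition~(3) holds by construction, and condition~(2), invariance under the symmetry group $G_C$, is automatic because for $m_C=1$ one has $G_C\le S_{\{x_1\}}=\{\id\}$ (Lemma~\ref{lm-symm}), while a constant function is invariant under every permutation of its arguments. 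The model of $\MM$ on $A$ corresponding to this $\MM$-family under Theorem~\ref{thm-alg-descr}(1) is then the desired model.

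I do not anticipate any genuine obstacle, since all the real work is already contained in Theorem~\ref{thm-alg-descr}. The only point that needs the small amount of care exercised above is the symmetry requirement~(2) of an $\MM$-family: one cannot in general take the $h_C$ with $m_C>1$ to be coordinate projections, because $G_C$ may act transitively on the coordinates (for instance when $t_C$ is a fully symmetric majority term and $G_C=S_3$). Choosing the $h_C$ to be constant for $m_C>1$ avoids this entirely, and the argument then works uniformly for every $\MM$ satisfying Global Assumption~\ref{global-assumption} and for every nonempty set $A$, whether finite or infinite.
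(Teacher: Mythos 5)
Your proposal is correct and follows the paper's own route exactly: the paper also deduces the corollary from Theorem~\ref{thm-alg-descr}(1) together with the observation that $\MM$-families exist on every nonempty set, which you simply make explicit by exhibiting the constant/identity family. The extra care you take with the symmetry condition~(2) is sound but not needed beyond what the paper already asserts.
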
  

\begin{proof}
  This follows from statement~(1) in Theorem~\ref{thm-alg-descr}, because
  $\MM$-families exist on every nonempty set.  
\end{proof}  

\section{Subalgebras of Random Models}
\label{sec-subalg}

As in the preceding sections, we will assume that $\MM=(\LL,\Sigma)$
satisfies our Global Assumption~\ref{global-assumption}.
Our aim is to prove that, with probability $1$,
a random finite model of $\MM$ has only `small' proper subalgebras, where
the meaning of `small' depends on some parameters of $\MM$.
To define these parameters we will use a maximal family of
essentially different linear terms for $\MM$
where every term has essential variables only --- just as we did in
the preceding section.
However, we will adopt a different notation,
which will be more convenient for our purposes here.

\begin{df}
  \label{df-pk}
    Let $x_1,\dots,x_m$ be distinct variables such that 
  $X=\{x_1,\dots,x_m\}$ is large enough for $\MM$,
  and let $\{t_i:0\le i\le r\}\subseteq \LT_X^\LL$
  be a maximal family of essentially
  different linear terms for $\MM$; that is, for $\EE=\EE_X^\MM$,
  the equivalence classes $\EE[t_i]$ ($0\le i\le r$) of the terms
  form a transversal for the orbits of $S_X$ on the set of all
  equivalence classes of $\EE$.
  Furthermore, assume that $t_0=x_1$, and for $i\in[r]$,
  we have $t_i=t_i(x_1,\dots,x_{d_i})$ where all variables $x_1,\dots,x_{d_i}$
  are essential, and
  \[  
d_\MM:=d_1=d_2=\dots=d_\ell<d_{l+1}\le \dots\le d_r\ (\ell\in[r]).  
  \]
  For arbitrary integer $k\ge d_\MM$ let
  \[
  p_\MM(k):=\sum_{i=1}^rq_i\binom{k}{d_i},
  \]
  where $q_i$ $(i\in [r])$
  is the index of the symmetry group $G_{t_i}$ of $t_i$ in the
  symmetric group $S_{\{x_1,\dots,x_{d_i}\}}$,
    and $\binom{k}{d_i}=0$ if $k< d_i$.
  \end{df}  

It is easy to see that in Definition~\ref{df-pk} we have
$d_\MM\ge2$, and $d_\MM$
is the minimum of the arities of minimal terms for $\MM$.
Moreover, it follows from the choice of the terms
$t_1,t_2,\dots,t_r$
that
the sequence $d_1\le d_2\le \dots\le d_r$ of their arities,
the set of all pairs $(d_i,q_i)$ ($i\in[r]$), and hence
the parameters $p_\MM(k)$ ($k\ge d_\MM$)
depend on $\MM$ only, they
are independent of the choice of the terms $t_1,t_2,\dots,t_r$.

Recall from Theorem~\ref{thm-alg-descr} that
to every model $\al{A}$ of $\MM$ there is an \emph{associated $\MM$-family}
$(h_i:0\le i\le r)$ which consists of the
functions $h_i=t_i^{\al{A}}\restr{A^{(d_i)}}$
($0\le i\le r$).
The models $\al{A}$ of $\MM$ on a fixed set $A$
can be reconstructed from the associated $\MM$-families
$(h_i:0\le i\le r)$;
moreover, $h_0$ is the identity function $A\to A$ and
the functions $h_1,\dots,h_r$
can be chosen independently so that
the following conditions hold for each $i\in[r]$:
\addtocounter{equation}{1}
\begin{equation*}
  \tag{\theequation}\label{eq-hs}
\text{%
  \begin{minipage}[t]{.87\hsize}
    \ \kern-15pt$\diamond$
    $h_i$ is a disjoint union
    of its restrictions $h_i\restr{D^{(d_i)}}\colon D^{(d_i)}\to A$
    with $|D|=d_i$,\\
    \phantom{i}\kern-15pt$\diamond$
    these restrictions can be chosen independently, and\\
    \phantom{i}\kern-15pt$\diamond$
    each such restriction $h_i\restr{D^{(d_i)}}$ is constant on the $q_i$
    orbits of the symmetry group $G_{t_i}$ of $t_i$ (as $G_{t_i}$
    acts on $D^{(d_i)}$
    by permuting coordinates), but is otherwise arbitrary.
\end{minipage}
}
\end{equation*}  

The significance of the parameters $p_\MM(k)$ ($k\ge d_\MM$)
is explained by the
following lemma.

\begin{lm}
  \label{lm-subalgB}
Let $A$ be a set, and $B$ a $k$-element subset of $A$ such that
$k\ge d_\MM$.

\begin{enumerate}
  \item[{\rm(1)}]
The following conditions are equivalent:
\begin{enumerate}
\item[{\rm(a)}]
  $B$ is (the universe of) a subalgebra of $\al{A}$;
\item[{\rm(b)}]
  the $\MM$-family $(h_i:0\le i\le r)$
  associated to $\al{A}$ has the property that
  \begin{equation} 
    \label{eq-subalgB}
    \text{$h_i(B^{(d_i)})\subseteq B$ for every $i\in [r]$.}
  \end{equation}
\end{enumerate}
\item[{\rm(2)}]
  If $|A|=n$, then
  in the probability space of random models $\al{A}$ of $\MM$ on $A$,
  \begin{equation*}
  \PrA\bigl(\text{$B$ is a subalgebra of $\al{A}$}\bigr)
  =\left(\frac{k}{n}\right)^{p_\MM(k)}.
  \end{equation*}
\end{enumerate}  
\end{lm}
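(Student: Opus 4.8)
The plan is to prove part~(1) first, as a structural statement about models and $\MM$-families, and then to derive part~(2) by a direct counting argument using the independence built into Theorem~\ref{thm-alg-descr}.

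For part~(1), the implication (a)~$\Rightarrow$~(b) is immediate: if $B$ is a subalgebra, then $B$ is closed under every term operation of $\al{A}$, in particular under each $t_i^{\al{A}}$, so $h_i(B^{(d_i)}) = t_i^{\al{A}}(B^{(d_i)}) \subseteq B$. For the converse (b)~$\Rightarrow$~(a), assume \eqref{eq-subalgB}; I need to show $B$ is closed under $f^{\al{A}}$ for every $f\in\LL$. Fix $f$ of arity $d$ and a tuple $(b_1,\dots,b_d)\in B^d$, of pattern $\mu$ say. By \eqref{eq-fs_from_hs}, $f^{\al{A}}(b_1,\dots,b_d) = h_C(b_{\sigma(1)},\dots,b_{\sigma(m_C)})$ where $C\in\mathcal C$ is the class of $f(z_1,\dots,z_d)$ for an appropriate variable tuple of pattern $\mu$, and $(z_{\sigma(1)},\dots,z_{\sigma(m_C)})$ enumerates $X_C$. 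Translating to the indexed notation of Definition~\ref{df-pk}, $C = \EE[t_i]$ for a unique $i$ with $0\le i\le r$. If $i=0$ then $m_C = 1$ and $f^{\al{A}}(b_1,\dots,b_d) = h_0(b_{\sigma(1)}) = b_{\sigma(1)}\in B$ by idempotence. If $i\ge1$, then $(b_{\sigma(1)},\dots,b_{\sigma(d_i)})\in B^{d_i}$; but its pattern need not be the equality pattern, i.e.\ it may fail to lie in $B^{(d_i)}$. Here is the one subtle point: the tuple $(b_{\sigma(1)},\dots,b_{\sigma(m_C)})$ consists of coordinates of $(b_1,\dots,b_d)$ indexed by $\sigma$, and by construction the variables $z_{\sigma(1)},\dots,z_{\sigma(m_C)}$ are \emph{distinct} (they enumerate $X_C$), hence $z_{\sigma(j)} = z_{\sigma(j')}$ forces $j = j'$; since $(z_1,\dots,z_d)$ and $(b_1,\dots,b_d)$ have the same pattern $\mu$, it follows that $b_{\sigma(j)} = b_{\sigma(j')}$ implies $z_{\sigma(j)} = z_{\sigma(j')}$ implies $j=j'$. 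Thus $(b_{\sigma(1)},\dots,b_{\sigma(d_i)})\in B^{(d_i)}$, and \eqref{eq-subalgB} gives $h_i(b_{\sigma(1)},\dots,b_{\sigma(d_i)})\in B$, as required.

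For part~(2), I count the models $\al{A}$ of $\MM$ on $A$ for which $B$ is a subalgebra, using the bijection of Theorem~\ref{thm-alg-descr}(1) between models and $\MM$-families $(h_i:0\le i\le r)$. Since $h_0$ is forced to be the identity, a model is determined by the free choice of $h_1,\dots,h_r$, and by Theorem~\ref{thm-alg-descr}(2)--(3) each $h_i$ decomposes as a disjoint union of independently chosen restrictions $h_i\restr{D^{(d_i)}}$ over the $d_i$-element subsets $D\subseteq A$, each such restriction being an arbitrary function that is constant on the $q_i$ orbits of $G_{t_i}$ on $D^{(d_i)}$ and otherwise free. By part~(1), $B$ is a subalgebra iff $h_i(B^{(d_i)})\subseteq B$ for all $i\in[r]$, which — because the restrictions over distinct subsets $D$ are independent coordinates in the product — is a condition only on the single restriction $h_i\restr{B^{(d_i)}}$ for each $i$ (taking $D = B$), and a vacuous condition when $d_i > k = |B|$. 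So the probability factors as a product over $i\in[r]$ with $d_i\le k$, and over the orbits of $G_{t_i}$ on $B^{(d_i)}$, of the probability that an orbit-value lands in $B$ rather than in $A$: each such value is uniform over $A$, so the probability it lies in $B$ is $k/n$, and these choices are independent. The number of $G_{t_i}$-orbits on $B^{(d_i)}$ is $q_i\binom{k}{d_i}$ (there are $\binom{k}{d_i}$ choices of the underlying $d_i$-set of coordinate values, each contributing $q_i = |S_{\{x_1,\dots,x_{d_i}\}} : G_{t_i}|$ orbits), so the total probability is $(k/n)^{\sum_{i=1}^r q_i\binom{k}{d_i}} = (k/n)^{p_\MM(k)}$.

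The main obstacle is the verification in part~(1) that the argument-tuple $(b_{\sigma(1)},\dots,b_{\sigma(d_i)})$ actually lands in $B^{(d_i)}$ and not merely in $B^{d_i}$ — i.e.\ that feeding a pattern-$\mu$ tuple into $f^{\al{A}}$ really does reduce to evaluating $h_i$ on a tuple of \emph{distinct} elements — which is exactly where the choice of $t_i$ with all variables essential, together with Lemma~\ref{lm-ess-vars} and the pattern bookkeeping of \eqref{eq-fs_from_hs}, is used. Everything else is routine once the independence and orbit-counting from Theorem~\ref{thm-alg-descr}(2)--(3) are invoked.
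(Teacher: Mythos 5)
Your proposal is correct and follows essentially the same route as the paper: part (1) via the correspondence of Theorem~\ref{thm-alg-descr} together with \eqref{eq-fs_from_hs} (you usefully spell out the pattern argument showing the tuple $(b_{\sigma(1)},\dots,b_{\sigma(m_C)})$ lands in $B^{(m_C)}$, which the paper leaves as ``immediate''), and part (2) by the same independence and orbit-counting, merely collapsing the paper's two-stage product over subsets $D\subseteq B$ and then over $G_{t_i}$-orbits into a single product over all $q_i\binom{k}{d_i}$ orbits. The only blemish is the parenthetical ``taking $D=B$'': for $d_i<k$ the set $B^{(d_i)}$ is a union of $\binom{k}{d_i}$ blocks $D^{(d_i)}$ rather than a single one, but your subsequent count handles this correctly.
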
  

\begin{proof}
  We will use the notation of Definition~\ref{df-pk}.
  Let $\al{A}=\langle A;\LL\rangle$ be a random model of $\MM$, and let
  $(h_i:0\le i\le r)$ be the associated $\MM$-family.
  We will use Theorem~\ref{thm-alg-descr} in the form as it is restated in
  \eqref{eq-hs} and the paragraph preceding it.
  This theorem, combined with the description in 
  \eqref{eq-fs_from_hs} of how the operations of $\al{A}$ are constructed from
  the associated $\MM$-family, immediately imply that $B$ is a subalgebra of
  $\al{A}$ if and only if the requirement in condition~\eqref{eq-subalgB}
  holds for all $i$, $0\le i\le r$.
  Since $h_0$ is the identity function $A\to A$,
  it automatically satisfies this requirement, so there is no need
  for including the case $i=0$ in \eqref{eq-subalgB}.
  This proves statement~(1).

  To prove statement (2), we work in the probability space of all models of
  $\MM$ on $A$ where $|A|=n$.
  Combining statement~(1) above with the fact that the functions $h_i$
  ($i\in[r]$) are independent, we get that
  \begin{equation*}
  \PrA(\text{$B$ is a subalgebra})
  =\PrA\bigl(\text{$h_i(B^{(d_i)})\subseteq B$ for all $i\in[r]$}\bigr)
  =\prod_{i=1}^r\PrA\bigl(\text{$h_i(B^{(d_i)})\subseteq B$}\bigr).
  \end{equation*}

  Now, let us fix $i\in[r]$.
  By \eqref{eq-hs},
  the restrictions $h_i\restr{D^{(d_i)}}$ of $h_i$ to the different
  $d_i$-element subsets $D$ of $A$ are independent.
  Since $B^{(d_i)}$ is the union of all sets $D^{(d_i)}$ with $D\subseteq B$,
  $|D|=d_i$, the condition
  $h_i(B^{(d_i)})\subseteq B$ holds for $h_i$ if and only if
  $h_i(D^{(d_i)})\subseteq B$ for all $D\subseteq B$ with $|D|=d_i$.
  Thus,
  \[
  \PrA\bigl(\text{$h_i(B^{(d_i)})\subseteq B$}\bigr)
  =\prod_{\substack{D\subseteq B\\ |D|=d_i}}
        \PrA\bigl(\text{$h_i(D^{(d_i)})\subseteq B$}\bigr).
  \]
  Let $D\subseteq B$ with $|D|=d_i$.      
  Again by \eqref{eq-hs},
  the restriction $h_i\restr{D^{(d_i)}}$ of $h_i$ to $D$
  is constant on
  the $q_i$ orbits of the action of
  $G_{t_i}$ on the set $D^{(d_i)}$, and is otherwise
  arbitrary.
  Consequently, $h_i\restr{D^{(d_i)}}$ is determined by $q_i$
  free and independent choices of function values --- one for each orbit of
  $G_{t_i}$ on $D^{(d_i)}$. Thus, 
  \[
  \PrA\bigl(\text{$h_i(D^{(d_i)})\subseteq B$}\bigr)
  =\left(\frac{k}{n}\right)^{\!q_i}.
  \]
  By combining the results in the last three displayed lines we obtain that
  \[
  \PrA(\text{$B$ is a subalgebra})=
  \prod_{i=1}^r\prod_{\substack{D\subseteq B\\ |D|=d_i}}
  \PrA\bigl(\text{$h_i(D^{(d_i)})\subseteq B$}\bigr)
  =\prod_{i=1}^r
       \left(\frac{k}{n}\right)^{q_i\binom{k}{d_i}}
  =\left(\frac{k}{n}\right)^{p_\MM(k)}
  \]
  as claimed.
\end{proof}  

The following easy consequences of Lemma~\ref{lm-subalgB}(2) will be useful.

\begin{cor}
\label{cor-subalgB}
Let $A$ be an $n$-element set with $n>d_\MM$.
The following hold in the probability space of all models $\al{A}$ of $\MM$
on $A$.
\begin{enumerate}
\item[{\rm(1)}]
  For every integer $k$ such that $d_\MM\le k<n$,
  \begin{equation}
    \label{eq-subalgB2}
  \PrA\bigl(\text{$\al{A}$ has a $k$-element subalgebra}\bigr)
  \le\binom{n}{k}\left(\frac{k}{n}\right)^{p_\MM(k)}.
  \end{equation}  
\item[{\rm(2)}]
  For every integer $u\ge d_\MM$,
  \begin{equation}
    \label{eq-subalgB3}
    \PrA\bigl(\text{$\al{A}$ has a proper subalgebra of size $\ge u$}\bigr)
    \le\sum_{k=u}^{n-1}\binom{n}{k}\left(\frac{k}{n}\right)^{p_\MM(k)}.
  \end{equation}  
\end{enumerate}
\end{cor}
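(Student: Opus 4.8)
The plan is to obtain both statements from Lemma~\ref{lm-subalgB}(2) by an elementary union bound (Boole's inequality), so the argument should be only a few lines.

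For statement~(1), I would fix an integer $k$ with $d_\MM\le k<n$ and observe that the event ``$\al{A}$ has a $k$-element subalgebra'' is the union, over all $k$-element subsets $B$ of $A$, of the events ``$B$ is a subalgebra of $\al{A}$''. Hence, by subadditivity of probability,
\[
\PrA\bigl(\text{$\al{A}$ has a $k$-element subalgebra}\bigr)
\le\sum_{\substack{B\subseteq A\\ |B|=k}}\PrA\bigl(\text{$B$ is a subalgebra of $\al{A}$}\bigr).
\]
The sum has $\binom{n}{k}$ terms, and since $k\ge d_\MM$, Lemma~\ref{lm-subalgB}(2) applies to each, giving the common value $(k/n)^{p_\MM(k)}$. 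This yields \eqref{eq-subalgB2}.

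For statement~(2), I would note that any proper subalgebra of $\al{A}$ of size $\ge u$ has some size $k$ with $u\le k\le n-1$ (a subalgebra of size $n$ is all of $\al{A}$, hence not proper), so the event ``$\al{A}$ has a proper subalgebra of size $\ge u$'' is the union over $k=u,\dots,n-1$ of the events ``$\al{A}$ has a $k$-element subalgebra''. Applying subadditivity once more and then statement~(1) to each summand, which is legitimate because $k\ge u\ge d_\MM$ and $k\le n-1<n$ throughout, gives exactly \eqref{eq-subalgB3}.

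I do not expect any real obstacle here: the entire substance of the corollary is already packaged in Lemma~\ref{lm-subalgB}(2), and what remains is purely combinatorial bookkeeping. The only points that require a moment's care are keeping $k$ in the range $d_\MM\le k<n$ where Lemma~\ref{lm-subalgB}(2) is stated, and remembering to exclude the improper case $k=n$ when forming the sum in statement~(2).
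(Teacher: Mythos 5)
Your proof is correct and is exactly the argument the paper intends: the corollary is stated as an "easy consequence of Lemma~\ref{lm-subalgB}(2)" with no written proof, and the union bound over $k$-element subsets (for part~(1)) followed by a union bound over sizes $k=u,\dots,n-1$ (for part~(2)) is the standard filling-in of that claim. No issues.
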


Our next lemma is an analog of a result
Murski\v\i\ used in \cite[pp. 50--51]{murskii} (see \cite[Lemma 6.22]{bergman}).
We postpone the proof to the Appendix.

\begin{lm}
  \label{lm-murskii}
  \begin{equation}
    \label{eq-murskii}
  \lim_{n\to\infty}\sum_{k=4}^{n-1}\binom{n}{k}\left(\frac{k}{n}\right)^{\binom{k}{2}}
  =0.
  \end{equation}
\end{lm}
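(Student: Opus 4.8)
The plan is to split the sum into a range of ``small'' values of $k$ (where $k$ is bounded by some fixed constant, or grows slowly with $n$) and a range of ``large'' values of $k$ (where $k$ is comparable to $n$), and to show that each piece tends to $0$. The crude bound we will repeatedly use is $\binom{n}{k}\le n^k$, so each summand is at most $n^k(k/n)^{\binom{k}{2}} = n^{k-\binom k2}k^{\binom k2}$. For $k\ge4$ we have $\binom{k}{2}=k(k-1)/2\ge 2k$, so the exponent $k-\binom k2$ is $\le -k$, and more precisely $k-\binom k2 = -k(k-3)/2$, which is $\le -2$ for $k\ge4$ and grows quadratically in $k$. This already suggests the sum is controlled, but the factor $k^{\binom k2}$ is large and must be fought against the negative power of $n$, so the split is needed.

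First I would handle the tail where $k$ is at least some slowly growing threshold, say $k\ge \lceil\log n\rceil$ (or even a fixed large constant suffices once one is a little careful). Here I would instead use the standard estimate $\binom{n}{k}\le (en/k)^k$, giving the summand bound $(en/k)^k (k/n)^{\binom k2} = e^k (k/n)^{\binom k2 - k}$. Since $\binom k2 - k = k(k-3)/2\ge k/2$ for $k\ge4$, and $k/n\le (n-1)/n<1$, each such summand is at most $e^k (k/n)^{k/2}\le (e^2 k/n)^{k/2}$; summing a geometric-type series in this regime and using $k<n$ gives a bound that goes to $0$. Second, for the bottom range $4\le k\le \lceil\log n\rceil$, I would bound each summand by $n^{k}(k/n)^{\binom k2} \le n^{k}(\log n/n)^{\binom k2}$ wait — more simply, by $n^{k - \binom k2}(\log n)^{\binom k2}$; since $k-\binom k2\le -2$ for all $k\ge4$, this is at most $n^{-2}(\log n)^{O((\log n)^2)}$ per term, and there are at most $\log n$ terms, so the whole bottom range is $O\!\big(n^{-2}(\log n)^{(\log n)^2+1}\big)$, which still tends to $0$ since any fixed negative power of $n$ beats any poly-log-exponential factor. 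Assembling the two pieces yields the limit $0$.

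The main obstacle, and the place requiring the most care, is choosing the split point and the form of the binomial estimate so that the ``middle'' values of $k$ — those around, say, $\sqrt{n}$ or $n/2$, where both $\binom nk$ and $k^{\binom k2}$ are simultaneously huge — are genuinely controlled. The key quantitative fact that makes everything work is that the exponent of $1/n$ in the $k$-th summand, namely $\binom k2 - k$ after absorbing $\binom nk\le n^k$, grows \emph{quadratically} in $k$, whereas the competing logarithmic factors ($\log\binom nk \le k\log n$ and $\log k^{\binom k2}\le \binom k2\log k$) grow only like $k^2\log k$; since $\log k = o(\log n)$ is false in general, one must be slightly clever — this is precisely why the $(en/k)^k$ bound (rather than $n^k$) is the right tool once $k$ is large, as it replaces $\log n$ by $\log(n/k)$, which is small exactly when $k$ is comparable to $n$. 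An alternative, cleaner route — which I would present in the Appendix as indicated — is to write the summand's logarithm as $k\log n - \binom k2\log(n/k) + O(k)$ (using Stirling on $\binom nk$), observe this is $\le k\log n - \tfrac{k(k-1)}{2}\log(n/k)$, and then check that this quantity is bounded above by, say, $-\log n$ uniformly for $4\le k\le n-1$ once $n$ is large, perhaps after separating the two regimes $k\le \sqrt n$ and $k>\sqrt n$; multiplying by the at most $n$ terms in the sum still leaves something like $n\cdot n^{-1}$ — so one actually needs the per-term bound to be $o(1/n)$, e.g.\ $n^{-2}$, which the quadratic growth of $\binom k2 - k$ comfortably provides for $k\ge4$ (this is exactly where the hypothesis $k\ge4$, rather than $k\ge3$, is used: at $k=3$ one has $k-\binom k2=0$ and the argument fails). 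I expect the write-up to be a half-page of careful but routine inequality manipulation.
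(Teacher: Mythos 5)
Your proposal has two concrete gaps, one in each regime, and both occur at exactly the places you yourself flag as delicate.

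In the large-$k$ regime, each step of the chain $\binom{n}{k}(k/n)^{\binom{k}{2}}\le e^k(k/n)^{\binom{k}{2}-k}\le e^k(k/n)^{k/2}=(e^2k/n)^{k/2}$ is valid, but the final bound does not tend to $0$: for $k\ge n/e^2$ the base $e^2k/n$ is at least $1$, and at $k=n-1$ the bound is roughly $e^{n-1}$ while the true summand is about $n\,e^{-n/2}$. Weakening $\binom{k}{2}-k=k(k-3)/2$ to $k/2$ discards the quadratic growth of the exponent, which is the only thing that can beat $e^k$ once $k/n$ is bounded away from $0$. Worse, even retaining the full exponent, the estimate $\binom{n}{k}\le(en/k)^k$ is itself fatally lossy near $k=n$: it bounds $\binom{n}{n-1}=n$ by roughly $e^n$, and $e^k(k/n)^{\binom{k}{2}-k}$ is about $e^{n/2}$ at $k=n-1$. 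For $k>n/2$ one must switch to $\binom{n}{k}=\binom{n}{n-k}$ or to an entropy/Stirling bound. The paper does the latter: it isolates a top band $vn<k<n$ (with $v=0.95$), uses $\binom{n}{\lfloor vn\rfloor}<\sqrt{n}\bigl((v-\tfrac{1}{n})^{v}(1-v)^{1-v}(1-v)^{1/n}\bigr)^{-n}$ against $(k/n)^{\binom{k}{2}}<e^{-v^2n/3}$, and handles a middle band $un\le k\le vn$ separately with the trivial $\binom{n}{k}\le2^n$ against $v^{\Theta(n^2)}$.

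In the small-$k$ regime, the assertion that $n^{-2}(\log n)^{(\log n)^2+1}\to0$ is false: $(\log n)^{(\log n)^2}=\exp\bigl((\log n)^2\log\log n\bigr)$ grows faster than any fixed power of $n$, so a fixed negative power of $n$ does not beat it. The error comes from decoupling: you bound $n^{k-\binom{k}{2}}\le n^{-2}$ and $k^{\binom{k}{2}}\le(\log n)^{\binom{\lceil\log n\rceil}{2}}$ separately, but only the product $n^{-k(k-3)/2}(\log n)^{k(k-1)/2}$ with both exponents tied to the same $k$ is small (e.g.\ it equals $\bigl((\log n)^{(k-1)/(k-3)}/n\bigr)^{k(k-3)/2}\le\bigl((\log n)^3/n\bigr)^{2}$ for $k\ge4$). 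The paper avoids this issue entirely by showing that $\zeta_n(k):=\binom{n}{k}(k/n)^{\binom{k}{2}}$ is decreasing in $k$ on $4\le k\le n/2$ and bounding the whole bottom sum by $\tfrac{n}{2}\cdot\zeta_n(4)=O(1/n)$. Both of your regimes are repairable along these lines, but as written neither inequality chain establishes the limit.
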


Now we are ready to state and prove the main theorem of this section
on the sizes of
proper subalgebras of random finite models of $\MM$.

\begin{thm}
  \label{thm-subalg}  
  Let $\al{A}$ be a random finite model of $\MM$.
  \begin{enumerate}
  \item[{\rm(1)}]
    Every subset of $A$ of size less than $d_\MM$ is (the universe of)
    a subalgebra of $\al{A}$.
  \item[{\rm(2)}]
    The probability that $\al{A}$ has a proper subalgebra of size
    $d=d_\MM$ is
  \[
    {}\begin{cases}
      1           & \text{if\ \ $p_\MM(d)<d$,}\\
      1-e^{-d^d/d!} & \text{if\ \ $p_\MM(d)=d$,\ \ and}\\
      0           & \text{if\ \ $p_\MM(d)>d$.}
    \end{cases}
  \]
  \item[{\rm(3)}]
    The probability that $\al{A}$ has a $(d_\MM+1)$-element proper
    subalgebra is $0$ if \break $p_\MM(d_\MM+1)>d_\MM+1$,
    i.e., if one of the following conditions holds:
    \begin{itemize}
    \item
      $p_\MM(d_\MM)>1$, or
    \item
      there exists a linear term for $\MM$ with exactly $d_\MM+1$
      essential variables.
    \end{itemize}
  \item[{\rm(4)}]
    The probability that $\al{A}$ has a proper subalgebra of size
    $\ge d_\MM+2$ is $0$.
  \end{enumerate}    
\end{thm}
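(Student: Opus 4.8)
The plan is to prove Theorem~\ref{thm-subalg} by combining the probability estimate of Corollary~\ref{cor-subalgB} with the asymptotic analysis of Lemma~\ref{lm-murskii}, treating the four statements in turn.

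\textbf{Statement (1).} This is immediate from the definition of $d_\MM$: every linear term for $\MM$ has at least $d_\MM$ essential variables, so in the notation of $(\ddagger)$ applied to any $f\in\LL$, the value $f^{\al A}(a_1,\dots,a_d)$ always equals $h_C(a_{\sigma(1)},\dots,a_{\sigma(m_C)})$ where $m_C\ge d_\MM$. If $B\subseteq A$ has $|B|<d_\MM$, then for any $(a_1,\dots,a_d)\in B^d$ the $m_C$-tuple $(a_{\sigma(1)},\dots,a_{\sigma(m_C)})$ cannot lie in $A^{(m_C)}$ unless $m_C=1$; but when the tuple is not all-distinct, $f^{\al A}$ was shown (in the proof of Theorem~\ref{thm-alg-descr}, case $(a_1,\dots,a_k)\notin A^{(k)}$) to reduce to a term with strictly fewer essential variables, so by induction every such value is one of the arguments $a_i\in B$. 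Hence $B$ is closed.

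\textbf{Statement (2).} I would first handle the cases $p_\MM(d)>d$ and $p_\MM(d)<d$ using Lemma~\ref{lm-subalgB}(2): for a fixed $d$-element $B\subseteq A$ with $|A|=n$, $\PrA(B\text{ is a subalgebra})=(d/n)^{p_\MM(d)}$. By inclusion--exclusion / the union bound, the expected number of $d$-element subalgebras is $\binom{n}{d}(d/n)^{p_\MM(d)}\sim \frac{d^d}{d!}\,n^{d-p_\MM(d)}$. If $p_\MM(d)>d$ this tends to $0$, giving probability $0$; if $p_\MM(d)<d$ it tends to $\infty$, and one needs a second-moment argument to conclude the probability tends to $1$ --- distinct $d$-element subsets give (nearly) independent events since their operation-table entries are independently chosen by Theorem~\ref{thm-alg-descr}(3), so the standard Chebyshev/Paley--Zygmund second-moment method applies (the number of pairs $(B,B')$ with $|B\cap B'|\ge 1$ contributing correlation is lower-order). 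The borderline case $p_\MM(d)=d$ is the delicate one: here the number $N_n$ of $d$-element subalgebras has $\mathbb E[N_n]\to d^d/d!$, and I would show $N_n$ converges in distribution to a Poisson random variable with that mean by the method of moments (checking that the $j$-th factorial moment converges to $(d^d/d!)^j$, again using independence of table entries over disjoint $d$-sets and bounding the contribution of overlapping $d$-sets), so $\PrA(N_n\ge 1)\to 1-e^{-d^d/d!}$.

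\textbf{Statements (3) and (4).} For sizes $k\ge d_\MM+1$ I would use Corollary~\ref{cor-subalgB}(1): $\PrA(\al A\text{ has a }k\text{-element subalgebra})\le\binom{n}{k}(k/n)^{p_\MM(k)}$. For (3), the hypothesis $p_\MM(d_\MM+1)>d_\MM+1$ makes the exponent strictly exceed $k=d_\MM+1$, so $\binom{n}{k}(k/n)^{p_\MM(k)}\to 0$. The equivalence with the two bulleted conditions is a direct computation from $p_\MM(k)=\sum_i q_i\binom{k}{d_i}$: if there is a term with exactly $d_\MM+1$ essential variables then that summand contributes $q_i\binom{d_\MM+1}{d_\MM+1}=q_i\ge 1$ on top of the $d_\MM$-ary contribution, pushing $p_\MM(d_\MM+1)$ above $\binom{d_\MM+1}{d_\MM}=d_\MM+1$; otherwise $p_\MM(d_\MM+1)=p_\MM(d_\MM)\cdot\binom{d_\MM+1}{d_\MM}=(d_\MM+1)\,p_\MM(d_\MM)$, which exceeds $d_\MM+1$ exactly when $p_\MM(d_\MM)>1$. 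For (4), sum the bound over $k$ from $d_\MM+2$ to $n-1$ as in Corollary~\ref{cor-subalgB}(2); the main point is that for every $k\ge d_\MM+2$ one has $p_\MM(k)\ge\binom{k}{d_\MM}\ge\binom{k}{2}$ (since $d_\MM\ge 2$ and $k\ge 4$ when $d_\MM\ge 2$, so $\binom{k}{d_\MM}\ge\binom{k}{2}$ holds as $d_\MM\le k/2$ fails in general --- more carefully, $\binom{k}{d_\MM}\ge\binom{k}{2}$ whenever $2\le d_\MM\le k-2$, which is exactly the range here since $k\ge d_\MM+2$). Hence $\sum_{k=d_\MM+2}^{n-1}\binom{n}{k}(k/n)^{p_\MM(k)}\le\sum_{k=4}^{n-1}\binom{n}{k}(k/n)^{\binom{k}{2}}\to 0$ by Lemma~\ref{lm-murskii}, so the probability is $0$.

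\textbf{Main obstacle.} The hardest part is the borderline case $p_\MM(d)=d$ in statement~(2): getting the exact constant $1-e^{-d^d/d!}$ requires a genuine Poisson-convergence argument rather than a crude union bound, and one must carefully verify that the dependence among the events ``$B$ is a subalgebra'' for overlapping $d$-element sets $B$ is asymptotically negligible in all factorial moments. The independence structure furnished by Theorem~\ref{thm-alg-descr}(2)--(3) is exactly what makes this tractable, but the bookkeeping of which table entries are shared between two $d$-sets $B,B'$ (entries $h_i\restr{D^{(d_i)}}$ with $D\subseteq B\cap B'$, which can only occur when $|B\cap B'|\ge d_i\ge d_\MM=d$, forcing $B=B'$) needs to be done cleanly to see the cross terms vanish.
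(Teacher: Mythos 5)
Your proposal is correct, and for parts (1), (3) and (4) it follows the paper's argument essentially verbatim: (3) rests on the identity $p_\MM(d_\MM+1)=(d_\MM+1)\,p_\MM(d_\MM)+\sum_{d_i=d_\MM+1}q_i$ together with the elementary limit $\binom{n}{k}(k/n)^{p}\to0$ for fixed $k$ and $p>k$, and (4) on the bound $p_\MM(k)\ge\binom{k}{d_\MM}\ge\binom{k}{2}$ for $k\ge d_\MM+2$ followed by Lemma~\ref{lm-murskii} (your first attempt at justifying $\binom{k}{d_\MM}\ge\binom{k}{2}$ is garbled, but the ``more carefully'' clause $2\le d_\MM\le k-2$ is the right one and is exactly what the paper uses). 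The genuine difference is in part (2), and there your ``main obstacle'' dissolves: as you yourself note in the final paragraph, a shared block $h_i\restr{D^{(d_i)}}$ for two distinct $d$-element sets $B,B'$ would require $D\subseteq B\cap B'$ with $|D|=d_i\ge d$, forcing $B=B'$; equivalently, $B^{(d_i)}$ and $(B')^{(d_i)}$ are disjoint for $B\ne B'$. Hence the events ``$B$ is a subalgebra'' are \emph{exactly} independent over distinct $d$-sets, so the paper simply writes
\[
\PrA\bigl(\text{no $d$-element subset is a subalgebra}\bigr)
=\Bigl(1-\bigl(\tfrac{d}{n}\bigr)^{p_\MM(d)}\Bigr)^{\binom{n}{d}}
\]
and evaluates the limit $\bigl((1-x_n)^{x_n^{-1}}\bigr)^{y_n}$ with $x_n=(d/n)^{p_\MM(d)}$ and $y_n=x_n\binom{n}{d}$, which handles all three cases of (2) simultaneously ($y_n\to\infty$, $d^d/d!$, or $0$). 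Your second-moment and Poisson method-of-moments arguments would go through, but they are machinery for \emph{approximate} independence that is not needed here; once the exact independence is in hand there are no cross terms to bound and no factorial moments to compute. One small slip in your part (1): the tuple $(a_{\sigma(1)},\dots,a_{\sigma(m_C)})$ always lies in $A^{(m_C)}$ by the construction in \eqref{eq-fs_from_hs}; the correct observation is that its entries are distinct elements of $B$, so $m_C\le|B|<d_\MM$ forces $C$ to be the unique class with $m_C=1$ and $h_C=\id$, whence every value of every operation on $B$ is one of its arguments --- which is just the paper's one-line remark that every linear term in fewer than $d_\MM$ variables is trivial for $\MM$.
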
  

Table~\ref{tab:subalg-size} summarizes most results of this theorem.
The four rows correspond to parts (1)--(4) of the theorem, and show
the probability for a random finite model of $\MM$ to have
a proper subalgebra of a specific size
$k=2,\dots,d_\MM-1$, or
$k=d_\MM$, or $k=d_\MM+1$, or of any size $k>d_\MM+1$,
respectively,
as a function of the parameter $p_\MM(d_\MM)$ of $\MM$.
(In the table $1^*$ indicates that an event is certain.)

As indicated by the ``?'' in Table~\ref{tab:subalg-size}, the
analysis in Theorem~\ref{thm-subalg} is incomplete:
we have been unable to establish a useful bound on the probability
that a random finite model of $\MM$ has a
$(d_\MM+1)$-element subalgebra,
provided the following two conditions hold for $\MM$:
\begin{enumerate}
\item[(a)]
  $p_\MM(d_\MM)=1$.\\
  Equivalently: there exists a unique $d_\MM$-ary minimal term $t$
  for $\MM$ and $t$ is totally symmetric (i.e., $t$ is invariant, modulo $\MM$,
  under all permutations of its $d_\MM$ variables), and hence
  by Theorem~\ref{thm-min-t}, $d_\MM\in\{2,3\}$ and
  $t$ is an essentially binary term or a minority term or a majority term for
  $\MM$.
\item[(b)]
  There is no linear term for $\MM$ with exactly $d_\MM+1$
  essential variables. 
\end{enumerate}
We leave this unsettled case as an open problem.

\begin{prb}\label{prb-1}
  Let $\MM$ satisfy our Global Assumption~\ref{global-assumption}.
If conditions (a)--(b) in the preceding paragraph hold for $\MM$,
what is the probability
that a random finite model of $\MM$ has a proper subalgebra of
size~$d_\MM+1$?
\end{prb}

\newlength{\tr} \tr=1.5pt
\newcolumntype{V}{!{\vrule width \tr }}
\newcommand{\thickh}[1]{\noalign{\global\arrayrulewidth\tr}\cline{#1}
  \noalign{\global\arrayrulewidth=.4pt}}

\begin{table}
  \centering
    \renewcommand{\arraystretch}{1.25}
   \begin{tabular}{|l|c|c|c|c|}
     \hline
     \hfill $p_\MM(d_\MM)$ & $=1$ & $=2, \dots, d_\MM-1$ & $=d_\MM$ &
             $=d_\MM+1,d_\MM+2,\dots$\\
     subalg size\phantom{$p_\MM(d_\MM)$} &&&&\\
     \hline
     $=2,\dots,d_\MM-1$ & $1^*$ & $1^*$ & $1^*$ & \multicolumn{1}{cV}{$1^*$}\\
     \cline{1-4}\thickh{5-5}
     $=d_\MM=d$& $1$ & 1& \multicolumn{1}{cV}{$1-e^{-d^d/d!}$} & 0\\
     \cline{1-2}\thickh{3-4}\cline{5-5}
     $=d_\MM+1$ & \multicolumn{1}{cV}{?} & 0 & 0 & 0\\
     \cline{1-1}\thickh{2-2}\cline{3-5}
     \multicolumn{1}{|lV}{$>d_\MM+1$} & 0 & 0 & 0 & 0\\
     \hline
   \end{tabular}
\medskip
  \caption{Probability of the existence of subalgebras of various sizes, as a function of $p_\MM(d_\MM)$}\label{tab:subalg-size}
\end{table}

The zeros in the lower half of Table~\ref{tab:subalg-size} witness that
with probability $1$, random finite models of $\MM$ have only `small'
proper subalgebras. We restate some of the results of Theorem~\ref{thm-subalg}
to emphasize this conclusion.

\begin{cor}
  \label{cor-subalg}
  If $\al{A}$ is a random finite model of $\MM$, then
  with probability $1$,
  \begin{itemize}
  \item
    $\al{A}$ has no proper subalgebras of any size $\ge d_\MM+2$;
  \item
    $\al{A}$ has no proper subalgebras of any size $\ge d_\MM+1$
    if either
    $p_\MM(d_\MM)>1$ or there exists a linear term for $\MM$ with
    exactly $d_\MM + 1$ essential variables; and 
  \item
    $\al{A}$ has no proper subalgebras of any size $\ge d_\MM$ if
    $p_\MM(d_\MM)>d_\MM$.
  \end{itemize}    
\end{cor}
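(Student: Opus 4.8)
The plan is to read off all three bullets directly from Theorem~\ref{thm-subalg}, with essentially no work beyond a union bound over at most three events. Fix a random finite model $\al{A}$ of $\MM$ on an $n$-element set, with $n$ large. For any threshold $s\ge d_\MM$, the event ``$\al{A}$ has a proper subalgebra of size $\ge s$'' is the union of the events ``$\al{A}$ has a subalgebra of size exactly $k$'' for the finitely many values $k\in\{s,\dots,d_\MM+1\}$, together with the single event ``$\al{A}$ has a proper subalgebra of size $\ge d_\MM+2$''. Hence
\[
\Prn\bigl(\exists\text{ proper subalgebra of size}\ge s\bigr)
\le
\sum_{k=s}^{d_\MM+1}\Prn\bigl(\exists\text{ subalgebra of size }k\bigr)
\;+\;\Prn\bigl(\exists\text{ proper subalgebra of size}\ge d_\MM+2\bigr),
\]
and since a sum of finitely many nonnegative quantities each tending to $0$ again tends to $0$, it suffices to check that under the relevant hypothesis each term on the right has limit $0$ as $n\to\infty$.

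For the first bullet ($s=d_\MM+2$) the sum is empty and the remaining term is $0$ by Theorem~\ref{thm-subalg}(4); nothing more is needed. For the second bullet take $s=d_\MM+1$: the sum has the single term $\Prn(\exists\text{ subalgebra of size }d_\MM+1)$, which tends to $0$ by Theorem~\ref{thm-subalg}(3), since the hypothesis ``$p_\MM(d_\MM)>1$, or there is a linear term for $\MM$ with exactly $d_\MM+1$ essential variables'' is exactly the hypothesis of that part; the tail term is $0$ by part~(4). For the third bullet take $s=d_\MM$ and assume $p_\MM(d_\MM)>d_\MM$. Because $d_\MM\ge2$ we then have $p_\MM(d_\MM)>d_\MM\ge 2>1$, so the hypotheses of both Theorem~\ref{thm-subalg}(2) (its third case) and (3) are met; hence $\Prn(\exists\text{ proper subalgebra of size }d_\MM)\to0$ and $\Prn(\exists\text{ subalgebra of size }d_\MM+1)\to0$, and adding the tail term from part~(4) gives the claim.

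The only point that must not be overlooked is that the split above involves only finitely many (in fact at most two) size classes below $d_\MM+2$, so that passing from the finite-$n$ union bound to the limit is legitimate; all the genuine estimation has already been carried out inside Theorem~\ref{thm-subalg}, ultimately in Corollary~\ref{cor-subalgB} and Lemma~\ref{lm-murskii}. Consequently I do not anticipate any real obstacle here: this corollary is a deliberate repackaging of the lower half of Table~\ref{tab:subalg-size}, stated so as to make the ``only small proper subalgebras'' conclusion explicit.
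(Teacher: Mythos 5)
Your proof is correct and matches the paper's intent: the paper offers no separate argument for this corollary, presenting it as a direct restatement of Theorem~\ref{thm-subalg}, and your explicit finite union bound over the size classes $d_\MM$, $d_\MM+1$, and $\ge d_\MM+2$ is exactly the reasoning left implicit there. The one hypothesis check you needed -- that $p_\MM(d_\MM)>d_\MM\ge 2$ implies the condition of part~(3) -- is handled correctly.
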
  

Now we prove Theorem~\ref{thm-subalg}.

\begin{proof}[Proof of Theorem~\ref{thm-subalg}]
  We will use the notation of Definition~\ref{df-pk}, but
  for simplicity, we will write $d$ for $d_\MM$ and
  $p(k)$ for $p_\MM(k)$ throughout the proof.
  Let $\al{A}$ be a random finite model of $\MM$, and let
  $(h_i:0\le i\le r)$ be the associated $\MM$-family. 
  Statement~(1) of the theorem
  is clearly true, because every linear term for $\MM$
  with less that $d$ variables is trivial
  (i.e., $\Sigma$-equivalent to a variable).
  For statements (2)--(4)
  we will assume without loss of generality that $|A|=n>d+2$.

  To prove statement (2), we first work in the probability space
  of all models $\al{A}$ of $\MM$ on a fixed set $A$. 
  Let $B$ be a $d$-element subset of $A$.
  We see from Lemma~\ref{lm-subalgB}(2) that
  the probability that $B$ is \emph{not} a subalgebra of $\al{A}$
  is $1-(\frac{d}{n})^{p(d)}$.
  Since for different
  $d$-element
  subsets $B$ and $C$ of $A$ the sets
  $B^{(d_i)}$ and $C^{(d_i)}$ are disjoint for all $i\in[r]$ (moreover,
  $B^{(d_i)}=C^{(d_i)}=\emptyset$ whenever $d_i>d$), we see that
  condition \eqref{eq-subalgB} and its version with $C$ replacing $B$
  are independent for any two different $d$-element subsets $B$ and $C$
  of $A$.
  Hence, it follows from Lemma~\ref{lm-subalgB}(1) that the events
  ``$B$ is a subalgebra of $\al{A}$'' and 
  ``$C$ is a subalgebra of $\al{A}$'' are also independent
  for any two different $d$-element subsets $B$ and $C$
  of $A$.
  Consequently,
  \[
  \PrA\bigl(\text{no $d$-element subset of $A$
    is a subalgebra of $\al{A}$}\bigr)
  =\left(1-\left(\frac{d}{n}\right)^{p(d)}\right)^{\binom{n}{d}}.
  \]
  
  This implies that for a random finite model $\al{A}$ of $\MM$,
  the probability that no $d$-element subset of $A$ is
  a subalgebra of $\al{A}$ is
  \[
  \lim_{n\to\infty}\left(1-\left(\frac{d}{n}\right)^{p(d)}\right)^{\binom{n}{d}}
  =\lim_{n\to\infty}\Bigl((1-x_n)^{x_n^{-1}}\Bigr)^{y_n}
  \]
  where $x_n:=\left(\frac{d}{n}\right)^{p(d)}$ and
  $y_n:=(\frac{d}{n})^{p(d)}\binom{n}{d}$.
  Since
  \[
  \lim_{n\to\infty}x_n=0,\ \ \lim_{n\to\infty}(1-x_n)^{x_n^{-1}}=e^{-1},\ \ 
  \text{and}\ \ 
  \lim_{n\to\infty}y_n
  =\begin{cases}
  \infty & \text{if $p(d)<d$,}\\
  d^d/d! & \text{if $p(d)=d$,}\\
  0      & \text{if $p(d)>d$,}\\
  \end{cases}
  \]
  we obtain that the probability that $\al{A}$ has \emph{no}
  $d$-element subalgebra
  is $0$, $e^{-d^d/d!}$, or $1$, according to whether $p(d)<d$, $p(d)=d$, or
  $p(d)>d$.
  Hence the probability that $\al{A}$ \emph{has}
  a $d$-element subalgebra
  is as claimed in (2).

  In statement (3),
\begin{align*}
  \label{eq-pk}
  p(d+1) & {}=\sum_{i=1}^\ell q_i\binom{d+1}{d}+
  \sum_{i=\ell+1}^r q_i\binom{d+1}{d_i}\\
  & ={}\left(\sum_{i=1}^\ell q_i\right)(d+1)+
  \sum_{\substack{i\ \text{with}\\ d_i=d+1}} q_i
  =p_{\MM}(d)(d+1)+\sum_{\substack{i\ \text{with}\\ d_i=d+1}} q_i.
\end{align*}  
Since $p(d)\ge1$, we have $p(d+1)\ge d+1$; moreover,
$p(d+1)>d+1$ holds if and only if $p(d)>1$ or $d_i=d+1$ for some $i\in[r]$.
This proves the correctness of the characterization of the condition
$p(d+1)>d+1$.

To verify the main statement of (3) on the probability of the existence of
$(d+1)$-element subalgebras, recall from Corollary~\ref{cor-subalgB}(1)
that for models $\al{A}$ of $\MM$ on a fixed $n$-element set with $n>d+2$,
the probability that $\al{A}$ has a $(d+1)$-element proper subalgebra
is bounded above by the quantity on the right hand side of
\eqref{eq-subalgB2} for $k=d+1$.
Therefore statement (3) will follow if we prove the following claim.

  \begin{clm}
    \label{clm-subalg1}
    If $p(k)> k$, then
    $\lim_{n\to\infty}\binom{n}{k}\left(\frac{k}{n}\right)^{p(k)}=0$.
  \end{clm}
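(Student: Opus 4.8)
\textbf{Proof proposal for Claim~\ref{clm-subalg1}.}
The plan is to estimate $\binom{n}{k}$ crudely from above and observe that the resulting power of $n$ has a negative exponent. Recall that $k$ is fixed throughout (the limit is taken in $n$), and that by Definition~\ref{df-pk} the quantity $p(k)=p_\MM(k)=\sum_{i=1}^r q_i\binom{k}{d_i}$ is a nonnegative integer, since each $q_i$ is the index of a subgroup and each $\binom{k}{d_i}$ is a nonnegative integer. Hence the hypothesis $p(k)>k$ is equivalent to $p(k)\ge k+1$.

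First I would use the elementary bound $\binom{n}{k}\le \dfrac{n^k}{k!}$, valid for all $n\ge k$. Substituting this into the expression of interest gives
\[
0\le\binom{n}{k}\left(\frac{k}{n}\right)^{p(k)}
\le \frac{n^k}{k!}\cdot\frac{k^{p(k)}}{n^{p(k)}}
= \frac{k^{p(k)}}{k!}\, n^{\,k-p(k)} .
\]
Here $\dfrac{k^{p(k)}}{k!}$ is a constant depending only on $k$, and the exponent $k-p(k)\le -1$ is a negative integer. Therefore $n^{\,k-p(k)}\to 0$ as $n\to\infty$, and by the squeeze theorem the left-hand side tends to $0$ as well, which is exactly the assertion of the claim.

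There is no real obstacle here: the argument is a one-line estimate, and the only points that require a word of justification are that $k$ is a fixed parameter (so $k^{p(k)}/k!$ does not grow with $n$) and that $p(k)$ is an \emph{integer} strictly larger than $k$, so the power of $n$ genuinely decays rather than merely staying bounded. Both follow immediately from Definition~\ref{df-pk}. Once Claim~\ref{clm-subalg1} is in hand, applying it with $k=d+1$ together with Corollary~\ref{cor-subalgB}(1) yields statement~(3) of Theorem~\ref{thm-subalg}, as already indicated in the text.
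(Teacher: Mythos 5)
Your proof is correct and follows essentially the same route as the paper: the paper likewise bounds $\binom{n}{k}\le n^k/k!$ (written as $n(n-1)\cdots(n-k+1)/n^k\le 1$) and reduces the expression to a constant times a negative power of $n$, merely replacing the exponent $p(k)$ by $k+1$ at the outset rather than at the end. The integrality observation you make is harmless but not even needed, since $p(k)>k$ already gives $n^{\,k-p(k)}\to 0$.
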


  \begin{proof}[Proof of Claim~\ref{clm-subalg1}]
    \begin{equation*}
  \binom{n}{k}\left(\frac{k}{n}\right)^{p(k)}
  \le\binom{n}{k}\left(\frac{k}{n}\right)^{k+1}
  =\frac{k^{k+1}}{k!} \cdot \frac{n(n-1)\dots(n-k+1)}{n^k} \cdot \frac{1}{n}
  \le\frac{k^{k+1}}{k!} \cdot \frac{1}{n},
  \qquad
    \end{equation*}
    and for a fixed $k$, $\frac{k^{k+1}}{k!} \cdot \frac{1}{n}\to 0$
    as $n\to\infty$.
  \qedsymbdiamond  
  \end{proof}  

Finally, for the proof of statement (4), let $u=d+2$.
By Corollary~\ref{cor-subalgB}(2),
for models $\al{A}$ of $\MM$ on a fixed $n$-element set with $n>d+2$,
the probability that $\al{A}$ has a subalgebra of
size at least $u$ is bounded above by the sum on the
right hand side of \eqref{eq-subalgB3}.
Therefore
it suffices to show that this sum tends to $0$ as $n\to\infty$, as stated
in the following claim.

\begin{clm}
    \label{clm-subalg2}
    For $u=d+2$ we have that
    $\lim_{n\to\infty}\sum_{k=u}^{n-1}\binom{n}{k}\left(\frac{k}{n}\right)^{p(k)}=0$.
  \end{clm}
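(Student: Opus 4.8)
The plan is to reduce the claim to Lemma~\ref{lm-murskii} by dominating each summand $\binom{n}{k}(k/n)^{p(k)}$ by $\binom{n}{k}(k/n)^{\binom{k}{2}}$ throughout the range $u\le k\le n-1$. Since $0<k/n<1$ for every such $k$, this domination is equivalent to the lower bound $p_\MM(k)\ge\binom{k}{2}$, so the heart of the matter is to prove
\[
p_\MM(k)\ge\binom{k}{2}\qquad\text{for all integers }k\ge d_\MM+2.
\]

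To establish this, I would first keep only the minimal-arity terms in the defining sum $p_\MM(k)=\sum_{i=1}^r q_i\binom{k}{d_i}$: because each index $q_i$ is at least $1$ and $d_1=\dots=d_\ell=d_\MM$ with $\ell\ge1$, we get $p_\MM(k)\ge\binom{k}{d_\MM}$. It then remains to verify the purely combinatorial inequality $\binom{k}{d}\ge\binom{k}{2}$ for $k\ge d+2$, where $d=d_\MM\ge2$. Taking the ratio gives $\binom{k}{d}\big/\binom{k}{2}=\frac{2}{d!}\,(k-2)(k-3)\cdots(k-d+1)$, a product of the $d-2$ factors $k-j$ with $j\in\{2,\dots,d-1\}$ (an empty product, equal to $1$, when $d=2$). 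For $k\ge d+2$ each such factor satisfies $k-j\ge d+2-j$, and as $j$ runs over $\{2,\dots,d-1\}$ the numbers $d+2-j$ run over $\{3,\dots,d\}$; hence the product is at least $3\cdot4\cdots d=d!/2$, so the ratio is at least $1$. (At $k=d+2$ equality holds, which is why $u=d_\MM+2$ is the natural cutoff here.)

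With the bound $p_\MM(k)\ge\binom{k}{2}$ in hand the claim follows quickly: for $u\le k\le n-1$ we have $(k/n)^{p(k)}\le(k/n)^{\binom{k}{2}}$, and since $d_\MM\ge2$ forces $u=d_\MM+2\ge4$, all terms being nonnegative yields
\[
\sum_{k=u}^{n-1}\binom{n}{k}\Bigl(\frac{k}{n}\Bigr)^{p(k)}
\le\sum_{k=u}^{n-1}\binom{n}{k}\Bigl(\frac{k}{n}\Bigr)^{\binom{k}{2}}
\le\sum_{k=4}^{n-1}\binom{n}{k}\Bigl(\frac{k}{n}\Bigr)^{\binom{k}{2}},
\]
and the last expression tends to $0$ as $n\to\infty$ by Lemma~\ref{lm-murskii}. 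I do not anticipate any serious obstacle; the only point needing a little care is the combinatorial inequality $\binom{k}{d}\ge\binom{k}{2}$ together with the observation that the cutoff $u=d_\MM+2$ is simultaneously large enough for that inequality and for the range $k\ge4$ required by Lemma~\ref{lm-murskii}.
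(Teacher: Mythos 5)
Your proposal is correct and follows essentially the same route as the paper: both reduce to Lemma~\ref{lm-murskii} via the chain $p_\MM(k)\ge\binom{k}{d_\MM}\ge\binom{k}{2}$ for $k\ge d_\MM+2$, noting $u=d_\MM+2\ge4$. The only difference is that you verify the inequality $\binom{k}{d}\ge\binom{k}{2}$ by an explicit ratio computation, whereas the paper simply cites $d\ge2$ and $k-d\ge2$; your extra detail is harmless and correct.
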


\begin{proof}[Proof of Claim~\ref{clm-subalg2}]
Notice that $u\ge4$, because $d\ge2$.
Furthermore, for all $k\ge u=d+2$ we have that
\[
p(k)\ge q_1\binom{k}{d_1}\ge\binom{k}{d_1}=\binom{k}{d}\ge\binom{k}{2},
\]
because $d\ge2$ and $k-d\ge u-d=2$.
Therefore,
  \[
  \sum_{k=u}^{n-1}\binom{n}{k}\left(\frac{k}{n}\right)^{p(k)}
  \le
  \sum_{k=u}^{n-1}\binom{n}{k}\left(\frac{k}{n}\right)^{\binom{k}{2}},
  \]
where the right hand side tends to $0$ as $n\to\infty$, by
  Lemma~\ref{lm-murskii}.
  \qedsymbdiamond  
  \end{proof}  
The proof of Theorem~\ref{thm-subalg} is complete.
\end{proof}

\section{Criterion for Random Models to be Almost Surely Idemprimal}
\label{sec-idempr}

As before, we will assume that $\MM$ satisfies our
Global Assumption~\ref{global-assumption}.
Our aim in this section is to show that under fairly mild additional
assumptions on
$\MM$, random models of $\MM$ are, with probability $1$, idemprimal.
Recall that an algebra $\al{A}$
is called \emph{idemprimal} if every idempotent operation on its universe is
a term operation of $\al{A}$. In particular, if $\al{A}=(A;\LL)$ is a model of
$\MM$, and hence is idempotent, it will be idemprimal if and only if the
term operations of $\al{A}$ are exactly the idempotent operations on $A$. 

Our main result is the following theorem.

\begin{thm}
  \label{thm-idempr}
  The following conditions on $\MM=(\LL,\Sigma)$ are equivalent:
  \begin{enumerate}
  \item[{\rm(a)}]
    With probability $1$, a random
    finite model of $\MM$ is idemprimal.
  \item[{\rm(b)}]
    The minimum arity $d_\MM$
    of a minimal term for $\MM$ is $2$ and
    $p_\MM(2)>2$.    
  \item[{\rm(c)}]
    There exist either
    \begin{itemize}
 \item
   three essentially different nontrivial binary terms
      for $\MM$,
      or
 \item
   two essentially different nontrivial binary terms, $s$ and $t$,
      for $\MM$
      such that $\Sigma\not\models s(x,y)\approx s(y,x)$.
 \end{itemize}  
  \end{enumerate}
\end{thm}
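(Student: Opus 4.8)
The plan is to prove (b) $\Leftrightarrow$ (c) directly and then to close the cycle (a) $\Rightarrow$ (b) $\Rightarrow$ (a). \emph{(b) $\Leftrightarrow$ (c).} This is a syntactic translation. Since $\binom{2}{d_i}=0$ for $d_i>2$, only binary minimal terms contribute to $p_\MM(2)$, so $p_\MM(2)=\sum_{i\colon d_i=2}q_i$ with each $q_i=[S_2:G_{t_i}]\in\{1,2\}$, and $q_i=1$ exactly when $t_i$ is symmetric modulo $\MM$ (i.e.\ $\Sigma\models t_i(x,y)\approx t_i(y,x)$). Because $\MM$ is idempotent, every proper identification minor of a binary term is trivial, so a binary term is nontrivial for $\MM$ iff it is a minimal term for $\MM$; in particular $d_\MM=2$ iff a nontrivial binary term exists. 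Writing $\ell$ for the number of essentially different nontrivial binary terms for $\MM$, a short case analysis then shows that ``$d_\MM=2$ and $p_\MM(2)>2$'' holds iff $\ell\ge3$, or $\ell=2$ with at least one of the two binary terms asymmetric --- which are exactly the two alternatives in (c).

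\emph{If (b) fails, then (a) fails.} If $d_\MM\ge3$, then by Theorem~\ref{thm-subalg}(1) every $2$-element subset of a model of $\MM$ is a subalgebra, hence a proper one of size $\ge2$ once $|A|\ge3$; but an idemprimal idempotent algebra has no proper subalgebra of size $\ge2$ (the idempotent operation sending every off-diagonal pair to a fixed element outside a given $2$-element set shows that set is not a subuniverse). Thus $\Prn(\al{A}\text{ idemprimal})=0$ for all $n\ge3$, so the limit in Definition~\ref{df-probab} is $0$ and (a) fails. If instead $d_\MM=2$ but $p_\MM(2)\le2$, then $p_\MM(d_\MM)\le d_\MM$, so by Theorem~\ref{thm-subalg}(2) a random finite model of $\MM$ has a proper $2$-element subalgebra with limiting probability $1$ (when $p_\MM(2)<2$) or $1-e^{-2}$ (when $p_\MM(2)=2$); this is positive, so a random finite model is idemprimal with limiting probability strictly less than $1$, and (a) fails.

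\emph{If (b) holds, then (a) holds --- the main step.} Assume $d_\MM=2$ and $p_\MM(2)>2$. Present a random model $\al{A}$ on $[n]$ through an $\MM$-family of independently chosen functions, as in Theorem~\ref{thm-alg-descr}; the point of $p_\MM(2)>2$ is that this attaches to every $2$-element subset $\{a,b\}\subseteq[n]$ at least three independent, uniformly distributed elements of $[n]$, coming from the binary term operations (the values at $(a,b)$ and $(b,a)$ counted separately when an asymmetric binary term is present). The plan is to show that, with probability tending to $1$ as $n\to\infty$, $\al{A}$ simultaneously has no proper subalgebra of size $\ge2$ (this is Corollary~\ref{cor-subalg}, and is exactly where $p_\MM(2)>2=d_\MM$ is used), is simple, is rigid, and avoids the remaining obstructions that would confine the clone of $\al{A}$ below the clone $J_{[n]}$ of all idempotent operations on $[n]$ --- i.e.\ preserves no proper nontrivial affine, central, or $h$-regular relation. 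Each of the latter conditions is handled by a Murski\v{\i}-style union bound of the same kind as the subalgebra estimates of Section~\ref{sec-subalg}: a candidate bad structure forces the $\ge3$ independent values attached to a positive fraction of the $2$-element subsets to lie in prescribed sets of density $<1$, so its probability is bounded by a constant $<1$ raised to a power growing with $n$, and this beats the (at most mildly superexponential) number of such structures of each type --- for the congruence bound one organizes the sum by class sizes, as in the proof of Theorem~\ref{thm-subalg}. Given all these, a completeness criterion for $J_{[n]}$ (equivalently, the classification of its maximal subclones into subalgebra, congruence, automorphism, affine, central, and $h$-regular types) forces $\al{A}$ to be idemprimal; alternatively, this clone-theoretic input can be replaced by a direct Murski\v{\i}-type interpolation argument that produces every idempotent operation as a term operation of $\al{A}$.

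\emph{Main obstacle.} The translation (b) $\Leftrightarrow$ (c) and the failure direction are routine given Section~\ref{sec-subalg}; the substance is the last step, whose two hard points are: isolating the exact finite list of genericity conditions whose conjunction is equivalent to idemprimality for a finite idempotent algebra (a clone-completeness input, where one must track which Rosenberg types survive under idempotence), and carrying out the counting for the non-subalgebra conditions --- verifying that the three independent values per pair supplied by $p_\MM(2)>2$ really do collapse every candidate congruence, every nonidentity automorphism, and every bad invariant relation with a probability small enough that the union bound over all candidates still tends to $0$. The strict inequality $p_\MM(2)>2$ (equivalently $p_\MM(2)\ge3$) is precisely what is needed for the subalgebra bound to vanish --- three independent values per $2$-element subset beat the $\sim n^2$ choices of such a subset --- and the counting for the other conditions rests on the further independence those values provide.
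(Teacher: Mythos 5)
Your treatment of (b) $\Leftrightarrow$ (c) and of the direction ``(b) fails $\Rightarrow$ (a) fails'' is correct and coincides with the paper's: both rest on Theorem~\ref{thm-subalg}(1)--(2) exactly as you describe. The gap is in the main direction (b) $\Rightarrow$ (a), which in your write-up is a plan rather than a proof, and a plan that is substantially heavier than what is actually needed. You propose to verify a full Rosenberg-style list of genericity conditions --- simplicity, rigidity, and the absence of compatible affine, central, and $h$-regular relations --- and you explicitly defer both the identification of the exact list (``which Rosenberg types survive under idempotence'') and all of the counting for the non-subalgebra conditions. None of those union bounds is carried out, and the congruence and higher-arity central/$h$-regular cases are precisely where the estimates are least routine (the number of equivalence relations on $[n]$ is the Bell number, and the number of $h$-ary candidate relations grows doubly exponentially in the arity unless one first bounds the arities that can occur).

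The paper avoids all of this by invoking a sharp published criterion (Theorem~\ref{thm-szendrei}, from \cite{szendrei}): a finite idempotent algebra of size $>2$ is idemprimal if and only if it has no proper subalgebras of size $>1$, no nontrivial automorphisms, and no compatible symmetric binary cross $\cross{a}$. This collapses your six families of obstructions to three, eliminating congruences, affine relations, and $h$-regular relations entirely, and it reduces the central-relation type to the $n$ binary crosses. The paper then proves two dedicated lemmas with explicit bounds: Lemma~\ref{lm-aut} shows $|H_{[n],\rho}|/|H_{[n]}|\le n^{-(n-2)}$ for each nonidentity permutation $\rho$ (so the union over at most $n!$ permutations vanishes), and Lemma~\ref{lm-cross} shows the same bound for each of the $n$ crosses, using the existence of a binary minimal term guaranteed by (b). To turn your proposal into a proof you would either need to supply the clone-completeness input in the precise idempotent form (at which point you are reproving Szendrei's theorem) or carry out the deferred estimates for every relation type on your longer list; as written, the heart of the theorem is asserted but not established.
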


We will use a criterion for idemprimality,
which follows from \cite[Cor.~1.4]{szendrei}, and
characterizes idemprimal algebras --- among finite idempotent algebras ---
by forbidden compatible relations.
Recall that a $k$-ary relation $R\subseteq A^k$ on the universe $A$ of
an algebra $\al{A}$ is called a \emph{compatible relation of $\al{A}$}
if $R$ is the universe of a subalgebra of $\al{A}^k$.

\begin{thm}\cite{szendrei}
  \label{thm-szendrei}
  If $\al{A}$ is a finite idempotent algebra with universe
  $A$ of size $>2$,
  then $\al{A}$ is idemprimal if and only if it satisfies the following
  three conditions:
  \begin{itemize}
  \item
    $\al{A}$ has no proper subalgebras of size $>1$,
  \item
    $\al{A}$ has no nontrivial automorphisms (i.e., $\al A$ has
    no automorphisms other than the identity map), and
  \item
    no symmetric binary cross $\cross{a}:=(\{a\}\times A)\cup(A\times\{a\})$
    $(a\in A)$ is a compatible relation of $\al{A}$.
  \end{itemize}
\end{thm}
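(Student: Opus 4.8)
The plan is to pass through the Galois connection between operations and relations on the finite set $A$. For a finite algebra the clone of term operations is $\mathrm{Clo}(\al{A})=\mathrm{Pol}(\mathrm{Inv}(\al{A}))$, the operations preserving every compatible relation of $\al{A}$; and the clone $\mathcal{I}$ of \emph{all} idempotent operations on $A$ equals $\mathcal{I}=\mathrm{Pol}\bigl(\{\,\{a\}:a\in A\,\}\bigr)$, because an operation preserves every singleton precisely when it is idempotent. Since $\al{A}$ is idempotent we always have $\mathrm{Clo}(\al{A})\subseteq\mathcal{I}$, so $\al{A}$ is idemprimal if and only if $\mathrm{Clo}(\al{A})=\mathcal{I}$, equivalently if and only if $\mathrm{Clo}(\al{A})$ is contained in no maximal subclone of $\mathcal{I}$. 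I would establish the two directions separately: necessity of the three conditions by an elementary direct construction, and their sufficiency by invoking the classification of maximal idempotent clones that underlies \cite[Cor.~1.4]{szendrei}.

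For necessity I would observe that each of the three configurations, if present, is a compatible relation of $\al{A}$ --- hence preserved by every term operation --- that is nevertheless not preserved by all idempotent operations, so that its presence forces $\mathrm{Clo}(\al{A})\subsetneq\mathcal{I}$ and precludes idemprimality. In each case one idempotent binary operation $g$, obtained by altering the first projection at a single off-diagonal pair, already witnesses the failure. For a proper subuniverse $B$ with $1<|B|<|A|$, choose distinct $b_1,b_2\in B$ and $c\in A\setminus B$ and put $g(b_1,b_2)=c$; then $g$ is idempotent but does not map $B^2$ into $B$. For a nonidentity automorphism $\pi$, whose graph $\{(a,\pi(a)):a\in A\}$ is compatible, an idempotent $g$ not commuting with $\pi$ exists since $\pi\ne\id$. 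For a compatible symmetric cross $\cross{a}$, use $|A|>2$ to pick distinct $b,c\in A\setminus\{a\}$ and set $g(a,c)=b$ and $g(b,a)=c$; applying $g$ coordinatewise to the two cross-elements $(a,b)$ and $(c,a)$ yields $(b,c)\notin\cross{a}$, so $g$ does not preserve $\cross{a}$. In every case $g\in\mathcal{I}\setminus\mathrm{Clo}(\al{A})$, as required.

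For sufficiency --- the substantive direction --- I would argue contrapositively: assuming $\al{A}$ is idempotent, $|A|>2$, and not idemprimal, $\mathrm{Clo}(\al{A})$ lies below some maximal subclone of $\mathcal{I}$, and I would read off from \cite[Cor.~1.4]{szendrei} the relations whose polymorphism clones are exactly these maximal idempotent clones. The remaining work is to match each relation on that list with one of the three conditions: a proper subuniverse with the first, the graph of a nonidentity automorphism with the second, and a compatible symmetric cross with the third. The main obstacle is precisely this reconciliation of relation types: one must verify that, under idempotence and the hypothesis $|A|>2$, the classification produces no obstruction outside these three families --- in particular that the higher-arity, affine, order-type and equivalence-type relations occurring in the general theory of maximal clones either cannot be compatible here or already force one of the three listed configurations. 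The cross $\cross{a}$ deserves special attention, since it contains only one diagonal element and hence is neither reflexive nor a central relation in the usual Rosenberg sense; identifying it as exactly the obstruction that survives in the idempotent setting (and ruling out its reflexive relatives) is the delicate point I expect to absorb most of the effort, and is where the strength of \cite[Cor.~1.4]{szendrei} is genuinely used.
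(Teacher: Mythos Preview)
The paper does not prove Theorem~\ref{thm-szendrei}; it is simply quoted from \cite{szendrei} (the introductory sentence says it ``follows from \cite[Cor.~1.4]{szendrei}'') and then used as a black box. So there is no in-paper argument to compare your proposal against.

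That said, your sketch is a sensible reconstruction of how the statement follows from the cited corollary. Your necessity direction is correct and self-contained: the three witnesses you build are idempotent binary operations obtained by perturbing the first projection off the diagonal, and each visibly fails to preserve the relevant relation. For sufficiency you do exactly what the paper does implicitly --- defer to the classification in \cite[Cor.~1.4]{szendrei} of maximal subclones of the idempotent clone --- and you correctly identify the one nontrivial bookkeeping task, namely verifying that for $|A|>2$ every maximal idempotent subclone is $\mathrm{Pol}(\rho)$ for some $\rho$ falling under one of the three listed types. Your remark that the cross $\cross{a}$ is the surviving ``non-reflexive'' obstruction peculiar to the idempotent setting is exactly the point of that corollary. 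Note, though, that your sufficiency argument is not independent of the cited paper: you are invoking the same external result, so this is an elaboration of the citation rather than an alternative proof.
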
  

Subalgebras of random models of $\MM$ were studied in
Section~\ref{sec-subalg}. In the forthcoming lemmas we will
discuss the probability
that a random model of $\MM$ has nontrivial automorphisms or
compatible crosses.
We start with a lemma which outlines a general method for proving that
these probabilities are $0$.
We will use the following notation:
\begin{align}
  &\text{$t=t(x_1,\dots,x_d)$
    is a minimal term of arity $d$
    for $\MM$, and}\label{eq-hq}\\
  &\text{$q$ is the index of $G_t$ in $S_{\{x_1,\dots,x_d\}}$;}\notag
\end{align}
for example, with the notation of Definition~\ref{df-pk}, we may choose
$t$ to be $t_1$, whence $d=d_1$ and $q=q_1$.
Furthermore, for every finite set $A$,
\begin{multline}
  \label{eq-Hn}
\qquad
\text{$H_A$ denotes the set of all
  functions $t^{\al{A}}\restr A^{(d)}$}\\
\text{as $\al{A}$ runs over all models
  $\al{A}=\langle A;\LL\rangle$ of $\MM$.}
\qquad
\end{multline}
By \eqref{eq-hs}, we have that
\addtocounter{equation}{1}
\begin{equation*}
  \tag{\theequation}\label{eq-H_n_descr}
\text{%
  \begin{minipage}[t]{.87\hsize}
    $H_A$ is the set of all functions $h\colon A^{(d)}\to A$ such that\\    
    \phantom{mm}$\diamond$
    $h$ is the union
    of its restrictions $h\restr{D^{(d)}}$
    with $D\subseteq A$, $|D|=d$,\\
    \phantom{mm}$\diamond$
    these restrictions are independent, and\\
    \phantom{mm}$\diamond$
    each function $h\restr{D^{(d)}}$ is constant on the $q$
    orbits of the action\\
    \phantom{mm$\diamond$}
    of $G_t$ on $D^{(d)}$, and is otherwise arbitrary.
  \end{minipage}
}
\end{equation*}  

\noindent
For a $k$-ary relation $\rho$ on $A$ and $h\in H_A$, we will say
that \emph{$\rho$ is compatible with $h$} if for all
$r_1,\dots,r_d\in\rho$ such that the $d$-tuple $(r_{1i},\dots,r_{di})$
of $i$th coordinates of $r_1,\dots,r_d$ lies in $A^{(d)}$ for each $i$
($1\le i\le k$), we have that by applying $h$ coordinatewise to
$r_1,\dots,r_d$, we get a tuple $h(r_1,\dots,r_d)$ in $\rho$.
Clearly, if $h=t^{\al{A}}\restr A^{(d)}$ for an algebra $\al{A}$ such that $\rho$
is a compatible relation of $\al{A}$, then $\rho$ will be compatible with $h$.
We will use the following notation:
\begin{equation}
  \label{eq-H_n_rho}
  H_{A,\rho}=\{h\in H_A:\text{$\rho$ is compatible with $h$}\}.
\end{equation}

Now let $\RR$ be a function which assigns to every finite set $A$ a family
$\RR_A$ of (finitary) relations on $A$.
We will say that $(\RR_A:|A|<\omega)$ is a
\emph{homogeneous family of relations on finite sets} if
for every bijection $\tau\colon A\to B$ between finite sets $A,B$
we have that $\RR_B=\{\tau(\rho):\rho\in\RR_A\}$; that is, the
system is invariant under renaming elements of the base set.
In particular, each family $\RR_A$ is invariant under permuting elements
of the base set $A$.
Examples of homogeneous families of relations on finite sets
include the following:
\begin{enumerate}
\item[(i)]
  $\RR_A$ is the set of all equivalence relations on $A$;
\item[(ii)]
  $\RR_A$ is the set of all (graphs of) permutations on $A$;
\item[(iii)]
  $\RR_A$ is the set of all partial orders on $A$;
\item[(iv)]
  $\RR_A$ is the set of all crosses $\cross{a}$ ($a\in A$) on $A$.
\end{enumerate}
It is easy to see that $(\RR_A:|A|<\omega)$ is a homogeneous
family of relations on finite sets if and only if
\begin{itemize}
\item
  $\RR_{[n]}$ is a family of finitary relations on $[n]$ such that
  $\RR_{[n]}$ is invariant under all permutations of $[n]$, and
\item
  $\RR_A=\{\tau(\rho):\rho\in\RR_{[n]}\}$ whenever
  $|A|=n$ and $\tau\colon[n]\to A$ is a bijection.
\end{itemize}  
Our interest in homogeneous families of relations stems from the
following obvious fact:
if $(\RR_A:|A|<\omega)$ is a homogeneous
family of relations on finite sets, then the property
``$\al{A}$ has a compatible relation in $\RR_A$''
is an abstract property for finite algebras $\al{A}$;
therefore, for such families, it makes sense to ask what the probability
of this property is for finite models $\al{A}$ of $\MM$
(cf.~Definition~\ref{df-probab}).

\begin{lm}
  \label{lm-probab-zero}
Suppose we are given a homogeneous
family $(\RR_A:|A|<\omega)$ of finitary relations on finite sets
such that each $\RR_A$ is finite, and let $t$, $q$,
$H_A$, and $H_{A,\rho}$ $(\rho\in\RR_A)$
be as in \eqref{eq-hq}--\eqref{eq-H_n_rho}.
If
\begin{equation}
  \label{eq-probab-zero}
  \lim_{n\to0}\sum_{\rho\in\RR_{[n]}}
  \frac{|H_{[n],\rho}|}
       {|H_{[n]}|}=0,
\end{equation}       
then the probability that a random finite model $\al{A}$ of $\MM$ has a    
compatible relation in $\RR_A$ is $0$.
\end{lm}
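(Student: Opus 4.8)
The plan is to exploit Theorem~\ref{thm-alg-descr} (in the form \eqref{eq-hs}) to identify the probability space of random models of $\MM$ on $[n]$ with the product space of independently chosen functions $h_1,\dots,h_r$, and then to isolate the single factor $h_1$, which is exactly the restriction $t^{\al{A}}\restr{[n]^{(d)}}$ of the chosen minimal term $t=t_1$. Since the function $\RR$ is homogeneous, the event ``$\al{A}$ has a compatible relation in $\RR_{[n]}$'' is invariant under permutations of $[n]$, so it is an abstract property and Definition~\ref{df-probab} applies. The first step is to reduce the event to a statement about $h_1$ alone: I would argue that if $\al{A}$ has a compatible relation $\rho\in\RR_{[n]}$, then $\rho$ is in particular compatible with the single function $h_1=t^{\al{A}}\restr{[n]^{(d)}}\in H_{[n]}$, in the sense of \eqref{eq-H_n_rho}. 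This gives
\begin{equation*}
\Prn(\text{$\al{A}$ has a compatible relation in $\RR_{[n]}$})
\le \Prn\bigl(\exists\,\rho\in\RR_{[n]}:\ h_1\in H_{[n],\rho}\bigr).
\end{equation*}

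The second step is a union bound over the finitely many $\rho\in\RR_{[n]}$ together with the observation that, by Theorem~\ref{thm-alg-descr}(2), $h_1$ is chosen \emph{independently} of the other $h_i$ and is \emph{uniformly distributed} over $H_{[n]}$ (each member function of an $\MM$-family can be chosen independently, and for a fixed $\mathcal C$ the correspondence $\al{A}\mapsto(h_i)$ is a bijection onto the set of $\MM$-families, which is a product over $i$; hence marginalizing onto the $i=1$ coordinate gives the uniform distribution on $H_{[n]}$). Therefore for each $n$,
\begin{equation*}
\Prn\bigl(\exists\,\rho\in\RR_{[n]}:\ h_1\in H_{[n],\rho}\bigr)
\le \sum_{\rho\in\RR_{[n]}}\Prn\bigl(h_1\in H_{[n],\rho}\bigr)
= \sum_{\rho\in\RR_{[n]}}\frac{|H_{[n],\rho}|}{|H_{[n]}|}.
\end{equation*}
Combining the two displays and letting $n\to\infty$, the hypothesis \eqref{eq-probab-zero} forces the limit of the left-hand side to be $0$, which (since probabilities are nonnegative) shows that the probability a random finite model of $\MM$ has a compatible relation in $\RR_A$ is $0$, as claimed.

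The main obstacle, and the step requiring the most care, is justifying that $h_1$ really is uniformly distributed on $H_{[n]}$ and that the event ``$\al{A}$ has a compatible relation in $\RR$'' is controlled purely by $h_1$: the latter is an \emph{inequality}, not an equivalence, and it is crucial that we only need the bound in one direction, so losing information by forgetting $h_2,\dots,h_r$ costs nothing here. Concretely I would point out that $H_{[n]}$ as described in \eqref{eq-H_n_descr} is precisely the set of possible values of the first component of an $\MM$-family, that the components are independent (Theorem~\ref{thm-alg-descr}(2)), and that the uniform measure on a finite product pushed forward to one coordinate is uniform on that coordinate; hence $\Prn(h_1\in H_{[n],\rho})=|H_{[n],\rho}|/|H_{[n]}|$. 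Everything else is a routine union bound and a passage to the limit, so no further calculation is needed.
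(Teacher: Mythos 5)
Your proposal is correct and follows essentially the same route as the paper: the paper likewise bounds $\Prn(\text{$\rho$ is compatible})$ by $\Prn(h\in H_{[n],\rho})=|H_{[n],\rho}|/|H_{[n]}|$, using that $h=t^{\al{A}}\restr{[n]^{(d)}}$ is one of the independent members of the $\MM$-family (hence uniformly distributed on $H_{[n]}$), then applies the union bound over $\RR_{[n]}$ and passes to the limit. Your explicit remark that only a one-directional implication (compatibility with $\al{A}$ implies $h\in H_{[n],\rho}$) is needed matches the paper's reasoning exactly.
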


\begin{proof}
First we will find
an upper bound for the probabilities of the events
``$\rho$ is a compatible relation of $\al{A}$'' ($\rho\in\RR_A$)
in the probability space of all finite
models of $\MM$ on a fixed $n$-element set $A$
with $n>d$.

By our discussion preceding the definition of $H_{A,\rho}$ in
\eqref{eq-H_n_rho},
if $\al{A}$ is a random model of $\MM$ on $A$
such that $\rho$ is a compatible relation of $\al{A}$,  
and $h=t^{\al{A}}\restr{A^{(d)}}$, then $h\in H_{A,\rho}$.
By the choice of $t$, this function $h$ is a member of the
$\MM$-family associated to $\al{A}$.
For this $\MM$-family
we will use Theorem~\ref{thm-alg-descr} in the form as it is restated in
\eqref{eq-hs} and the paragraph preceding it.
Since the members (of arity $>1$) of this $\MM$-family
are independent, and $h$ is one of them,
we get that
  \begin{equation}
    \label{eq-rho1}
  \PrA(\text{$\rho$ is a compatible relation of $\al{A}$})
  \le
  \PrA\bigl(h\in H_{A,\rho}\bigr)
  =
  \frac{|H_{A,\rho}|}{|H_A|}.
  \end{equation}
Thus,   
  \begin{multline}
    \label{eq-RR_ubound}
    \qquad
    \PrA(\text{$\al{A}$ has a compatible relation in $\RR_A$})\\
    \le\sum_{\rho\in\RR_A}
    \PrA(\text{$\rho$ is a compatible relation of $\al{A}$})
    \le\sum_{\rho\in\RR_A} \frac{|H_{A,\rho}|}{|H_A|}.
    \qquad
  \end{multline}
The probability that a finite model $\al{A}=\langle A;\LL\rangle$
  of $\MM$ has a compatible relation
  in $\RR_A$ is the limit, as $n\to\infty$, of the probability
  estimated in \eqref{eq-RR_ubound} for $A=[n]$.  
  As a consequence of
assumption~\eqref{eq-probab-zero},
this limit is $0$, which completes
  the proof of Lemma~\ref{lm-probab-zero}.
\end{proof}

\begin{lm}
  \label{lm-aut}
  If $\al{A}$ is a random finite model of $\MM$, then
  the probability that $\al{A}$ has a nontrivial
  automorphism is $0$.
\end{lm}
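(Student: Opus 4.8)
The plan is to invoke Lemma~\ref{lm-probab-zero} with the homogeneous family $(\RR_A:|A|<\omega)$ in which $\RR_A$ is the (finite) set of graphs $\{(a,\pi(a)):a\in A\}$ of all \emph{non-identity} permutations $\pi$ of $A$. Since a bijective homomorphism $\al{A}\to\al{A}$ is an automorphism, the graph of a permutation $\pi$ of $A$ is a compatible relation of a model $\al{A}$ if and only if $\pi\in\Aut(\al{A})$, so $\al{A}$ has a nontrivial automorphism exactly when it has a compatible relation in $\RR_A$. It therefore suffices to verify \eqref{eq-probab-zero} for this family; that is, writing $\rho_\pi$ for the graph of $\pi$ and taking $t=t_1$, $d=d_1=d_\MM$, $q=q_1$ in \eqref{eq-hq}--\eqref{eq-H_n_rho}, to show that $\sum_{\pi\neq\id}|H_{[n],\rho_\pi}|/|H_{[n]}|\to0$ as $n\to\infty$, the sum ranging over non-identity permutations $\pi$ of $[n]$.

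By \eqref{eq-H_n_descr}, a function $h\in H_{[n]}$ is determined by one value in $[n]$ for each of the $q\binom{n}{d}$ orbits of $G_t$ on the sets $D^{(d)}$ with $D\subseteq[n]$, $|D|=d$ --- call these orbits \emph{cells} --- so $|H_{[n]}|=n^{q\binom{n}{d}}$. Unwinding the definition of compatibility, $h\in H_{[n],\rho_\pi}$ iff $h$ is $\pi$-equivariant, i.e. $h(\pi(a_1),\dots,\pi(a_d))=\pi\bigl(h(a_1,\dots,a_d)\bigr)$ for all distinct $a_1,\dots,a_d\in[n]$. As $\pi$ commutes with coordinate permutations, it acts on the set $\Omega$ of cells, and a $\pi$-equivariant $h$ is precisely a map $\tilde h\colon\Omega\to[n]$ with $\tilde h(\pi\cdot c)=\pi(\tilde h(c))$; such maps are obtained by choosing, on each $\pi$-orbit $\mathcal{O}\subseteq\Omega$ of length $\ell$, a value in $\mathrm{Fix}(\pi^{\ell})$ and spreading it over $\mathcal{O}$. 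Hence
\[
|H_{[n],\rho_\pi}|=\prod_{\mathcal{O}}\bigl|\mathrm{Fix}\bigl(\pi^{|\mathcal{O}|}\bigr)\bigr|\;\le\;(n-s)^{F_1}\,n^{L/2},
\]
where $s=|\mathrm{supp}(\pi)|$, $F_1$ is the number of $\pi$-fixed cells (each forces its value into $\mathrm{Fix}(\pi)$, which has $n-s$ elements) and $L=q\binom{n}{d}-F_1$ counts the cells in longer orbits (at most $L/2$ such orbits, each contributing a factor $\le n$). Dividing by $|H_{[n]}|$, $|H_{[n],\rho_\pi}|/|H_{[n]}|\le\bigl(\tfrac{n-s}{n}\bigr)^{F_1}n^{-L/2}$.

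To finish, group the permutations by the size $s\ge2$ of their support; there are at most $\binom{n}{s}s!\le n^{s}$ of each. A $d$-set disjoint from $\mathrm{supp}(\pi)$ is fixed pointwise, so $F_1\ge q\binom{n-s}{d}$. On the other hand the only $\pi$-fixed cells sit over $\pi$-invariant $d$-sets, and a short count gives, uniformly in $\pi$,
\[
\#\bigl\{D\subseteq[n]:|D|=d,\ \pi(D)=D\bigr\}\;\le\;\binom{n-s}{d}+\tfrac{d}{2}\,n^{d-1},
\]
since a $\pi$-invariant $d$-set meeting the support contains a full $\pi$-orbit of size $\ge2$ and hence has at most $d-2$ ``free'' points; thus $F_1\le q\binom{n-s}{d}+\tfrac{d}{2}q\,n^{d-1}$ and $L\ge q\bigl(\binom{n}{d}-\binom{n-s}{d}\bigr)-\tfrac{d}{2}q\,n^{d-1}$. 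Now split at $s=n/2$. For $2\le s\le n/2$, discard the $n^{-L/2}$ factor and use $\bigl(\tfrac{n-s}{n}\bigr)^{F_1}\le(1-s/n)^{\binom{n-s}{d}}\le e^{-c_\MM\,s\,n^{d-1}}\le e^{-c_\MM\,s\,n}$ for a positive constant $c_\MM$ depending only on $\MM$ (here $d_\MM\ge2$ enters), so that $\sum_{2\le s\le n/2}n^{s}e^{-c_\MM sn}\to0$. For $n/2<s\le n$, bound $\bigl(\tfrac{n-s}{n}\bigr)^{F_1}\le1$ and note that $\binom{n}{d}-\binom{n-s}{d}\ge\binom{n}{d}-\binom{\lceil n/2\rceil}{d}$ is at least a fixed positive fraction of $\binom{n}{d}$, so $L\ge c'_\MM\,n^{d}$ for large $n$ and $\sum_{n/2<s\le n}n^{s}\,n^{-c'_\MM n^{d}/2}\to0$ because $d_\MM\ge2$. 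Adding the two ranges yields \eqref{eq-probab-zero}, and Lemma~\ref{lm-probab-zero} gives the conclusion.

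The step I expect to be the crux is the uniform estimate on the number of $\pi$-invariant $d$-subsets: the naive idea that a nontrivial $\pi$ ``disturbs almost every cell'' is simply false for permutations of moderately large support (any $d$-set avoiding the support is fixed pointwise), so one really needs the sharp bound $\binom{n-s}{d}+O(n^{d-1})$, valid for all $\pi$ at once, to make the cell count outrun the factorially many permutations. The only other delicate point is keeping the $\pi$-action and the $G_t$-orbit structure on $\Omega$ correctly aligned --- i.e. arguing with cells rather than raw $d$-tuples --- but this does not alter the form of the estimates.
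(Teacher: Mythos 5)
Your proof is correct, and it uses the same outer framework as the paper's (invoke Lemma~\ref{lm-probab-zero} with the homogeneous family of graphs of non-identity permutations, then bound $\sum_\rho|H_{[n],\rho}|/|H_{[n]}|$), but the key counting estimate is carried out quite differently. The paper avoids the entire difficulty you identify as the crux --- the fact that a permutation of small support fixes almost all cells, so that one cannot hope for a per-permutation bound like $n^{-c\binom{n}{d}}$ uniformly --- by not trying to account for all the equivariance constraints at once. Instead it picks a single moved point $a\mapsto b=\rho(a)$, lets $\Gamma_\rho$ be the $d$-sets containing $a$ but not $b$ and $\Delta_\rho=\rho(\Gamma_\rho)$, observes that $\Gamma_\rho\cap\Delta_\rho=\emptyset$ (membership of $b$ separates them), and concludes that the $q$ cell-values over each $D\in\Delta_\rho$ are determined by values over a set outside $\Delta_\rho$. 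This yields the uniform bound $|H_{A,\rho}|/|H_A|\le n^{-q|\Delta_\rho|}\le n^{-(n-2)}$ for \emph{every} non-identity $\rho$, since $|\Delta_\rho|=\binom{n-2}{d-1}\ge n-2$, and then $n!\cdot n^{2-n}\le n^2/2^{n-1}\to0$ finishes the proof in a few lines. Your route --- the exact product formula $\prod_{\mathcal{O}}|\mathrm{Fix}(\pi^{|\mathcal{O}|})|$ over $\pi$-orbits of cells, the two-sided estimate on the number of $\pi$-invariant $d$-sets, and the stratification by support size with separate regimes $s\le n/2$ and $s>n/2$ --- is sound (I checked the orbit-consistency argument, the bound $\binom{n-s}{d}+\tfrac{d}{2}n^{d-1}$ on invariant $d$-sets, and both limit computations), and it extracts sharper information for permutations of small support; but for the purpose of this lemma that extra precision is not needed, and the paper's single-pair trick is the cleaner way to beat the factor $n!$.
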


\begin{proof}
  For every finite set $A$ let $\RR_A$ denote the set of all binary relations
  on $A$ which are graphs of nonidentity permutations of $A$.
  It is easy to see that
  $\rho\in\RR_A$ is a compatible relation of an algebra $\al{A}$
  on $A$ if and only if $\rho$ (considered as a function $A\to A$) is an
  automorphism of $\al{A}$. Clearly,
  $(\RR_A:|A|<\omega)$ is a homogeneous family of relations
  on finite sets.
  Therefore, our statement will follow from Lemma~\ref{lm-probab-zero}
  if we prove that \eqref{eq-probab-zero} holds for this choice of $\RR_A$.

  Let $\al{A}$ be a random model of $\MM$ on an $n$-element set $A$,
  let $\rho\in\RR_A$, and select $a,b\in A$ such that
  $\rho(a)=b\not=a$.
  It is easy to see that for any function $h\in H_A$, $\rho$ is compatible with
  $h$ --- i.e., $h\in H_{A,\rho}$ --- if and only if
\begin{equation}
  \label{eq-aut}
  h\bigl(\rho(a_1),\dots,\rho(a_d)\bigr)=\rho\bigl(h(a_1,\dots,a_d)\bigr)
  \quad 
  \text{for all}\ \ (a_1,\dots,a_d)\in A^{(d)}.
  \end{equation}
  
  Now let $\Gamma_\rho$ denote the family of all $d$-element subsets
  $C$ of $A$ such that $a\in C$ and $b\notin C$, and let
  $\Delta_\rho:=\{\rho(C):C\in\Gamma_\rho\}$.
  Then the assumption
  $\rho(a)=b\not=a$ implies that the sets in $\Delta_\rho$ contain $b$,
  while the sets in $\Gamma_\rho$ don't.
  Hence, $\Gamma_\rho\cap\Delta_\rho=\emptyset$, and therefore
\begin{equation}
  \label{eq-disjoint}
  \textstyle
  C^{(d)} \text{ is disjoint from }  \bigcup_{D\in\Delta_\rho}D^{(d)}
  \text{ for all } C\in\Gamma_{\rho}.
\end{equation}  
Furthermore, if $h\in H_{A,\rho}$, that is, if
  \eqref{eq-aut} holds for $h$, then
\begin{equation}
  \label{eq-determines}
  \text{$h\restr{C^{(d)}}$ with $C\in\Gamma_\rho$ determines
  $h\restr{D^{(d)}}$ with $D=\rho(C)\in\Delta_\rho$ via \eqref{eq-aut}.}  
\end{equation}

We claim that these facts imply the following inequalities:
\begin{equation}
  \label{eq-perm-ub}
  \frac{|H_{A,\rho}|}{|H_A|}\le\frac{1}{n^{q|\Delta_\rho|}}
  \le\frac{1}{n^{n-2}}.  
\end{equation}
For the proof we will use the description in \eqref{eq-H_n_descr} for
the functions $h\in H_A$.
  If $h\in H_{A,\rho}$, then by \eqref{eq-determines}, for each one of 
  the $|\Delta_\rho|$ choices of $D\in\Delta_\rho$, the constant values of
  $h$ on the $q$ orbits of $G_t$ on $D^{(d)}$ are determined by
  the function $h\restr{C^{(d)}}$ for some $C\in\Gamma_\rho$
  satisfying \eqref{eq-disjoint}.
  Hence, there is no choice for at least
  $q|\Delta_\rho|$ of the function values of $h$ that for an arbitrary member
  of $H_A$ could be chosen independently.
  Therefore
  $|H_{A,\rho}|\le |H_A|/n^{q|\Delta_\rho|}$, so
  the first inequality in \eqref{eq-perm-ub} follows.
  The second inequality in \eqref{eq-perm-ub}
  is a consequence of $q\ge1$ and
  $|\Delta_\rho|=|\Gamma_{\rho}|=\binom{n-2}{d-1}\ge n-2$
  (as $d\ge2$).

To prove \eqref{eq-probab-zero}
notice that
  $|\RRn|\le n!$. Hence, we get from \eqref{eq-perm-ub} (for $n>d$) that
  \[
  (0\le)\,
  \sum_{\rho\in\RRn}\frac{|H_{[n],\rho}|}{|H_A|}
    \le \frac{n!}{n^{n-2}}
  =n\left(\prod_{i=1}^{n-1}\sqrt{i(n-i)}\right)\frac{1}{n^{n-2}}
  \le n\Bigl(\frac{n}{2}\Bigr)^{n-1}\frac{1}{n^{n-2}}
  =\frac{n^2}{2^{n-1}},
  \]
  which implies that \eqref{eq-probab-zero} holds.
  This completes the proof of the lemma.
\end{proof}  

We now turn to
discussing the probability of
the presence of compatible crosses
in random finite models of $\MM$.
It is easy to see that if $\MM$ is the set of identities for
a single majority operation, then
in every model $\al{A}$ of $\MM$, all crosses $\cross a$ ($a\in A$)
are compatible relations of $\al{A}$.
Therefore the analog of Lemma~\ref{lm-aut} will not be true for
compatible crosses.
In this paper we will restrict to the case when $\MM$ has a minimal
binary term, which will be sufficient for the proof of
Theorem~\ref{thm-idempr}.

\begin{lm}
  \label{lm-cross}
  Assume $\MM$ has a minimal binary term.  
  If $\al{A}$ is a random finite model of $\MM$, then the
  probability that $\al{A}$ has a compatible symmetric cross 
  $\cross{a}$ $(a\in A)$ is $0$.
\end{lm}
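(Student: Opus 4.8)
The plan is to apply Lemma~\ref{lm-probab-zero} to the homogeneous family $(\RR_A:|A|<\omega)$ in which $\RR_A$ is the set of all crosses $\cross{a}$ $(a\in A)$ on $A$; this family is plainly homogeneous, each $\RR_{[n]}$ is finite with $|\RR_{[n]}|=n$, and ``$\al A$ has a compatible symmetric cross'' is exactly ``$\al A$ has a compatible relation in $\RR_A$''. Since $\MM$ has a minimal binary term, in \eqref{eq-hq} I would take $t=t(x_1,x_2)$ to be such a term, so that $d=2$ and $q\in\{1,2\}$ is the index of $G_t$ in $S_{\{x_1,x_2\}}$; recall from \eqref{eq-H_n_descr} that $H_A$ consists of exactly those functions $h\colon A^{(2)}\to A$ obtained by choosing, independently for each two-element subset $D\subseteq A$, a restriction $h\restr{D^{(2)}}$ that is constant on the $q$ orbits of $G_t$ on $D^{(2)}$ and is otherwise arbitrary, and hence $|H_A|=n^{q\binom{n}{2}}$ when $|A|=n$.

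The crucial step is to extract a strong necessary condition on the members of $H_{A,\cross a}$, and this is the part I expect to require the most care: one must read off from the exact definition of ``$\cross{a}$ is compatible with $h$'' precisely which constraints the cross imposes. I claim that if $h\in H_{A,\cross a}$ then either $h(a,b)=a$ for every $b\in A\setminus\{a\}$, or $h(b,a)=a$ for every $b\in A\setminus\{a\}$. To see this, fix $b,c\in A\setminus\{a\}$ and apply the compatibility condition (with $d=2$ and $k=2$) to the two tuples $r_1=(a,c)$ and $r_2=(b,a)$, which both lie in $\cross a$: the tuple of first coordinates is $(a,b)\in A^{(2)}$ and the tuple of second coordinates is $(c,a)\in A^{(2)}$, so $\bigl(h(a,b),h(c,a)\bigr)\in\cross a$, i.e.\ $h(a,b)=a$ or $h(c,a)=a$. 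If $h(a,b_0)\ne a$ for some $b_0\in A\setminus\{a\}$, then this disjunction, applied with $b=b_0$ and $c$ ranging over $A\setminus\{a\}$, forces $h(c,a)=a$ for all such $c$, which gives the dichotomy.

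Once the dichotomy is established the rest is routine counting. Each of the two sets it describes is obtained from $H_A$ by prescribing the value $a$ on the single $G_t$-orbit of $(a,b)$ (respectively $(b,a)$) inside $D^{(2)}$ for each of the $n-1$ two-element subsets $D$ containing $a$, leaving all remaining orbits free; hence each such set has cardinality $n^{q\binom{n}{2}-(n-1)}$, so $|H_{A,\cross a}|\le 2\,n^{q\binom{n}{2}-(n-1)}$ and therefore $|H_{A,\cross a}|/|H_A|\le 2/n^{\,n-1}$ for every $a\in A$ (here $n=|A|>2$). Summing over the at most $n$ crosses in $\RR_{[n]}$ gives $\sum_{\rho\in\RR_{[n]}}|H_{[n],\rho}|/|H_{[n]}|\le 2/n^{\,n-2}$, which tends to $0$ as $n\to\infty$; so \eqref{eq-probab-zero} holds, and Lemma~\ref{lm-probab-zero} yields that a random finite model of $\MM$ has a compatible symmetric cross with probability $0$, as claimed.
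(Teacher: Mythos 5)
Your proposal is correct and follows essentially the same route as the paper's own proof: the same application of Lemma~\ref{lm-probab-zero} to the homogeneous family of crosses, the same dichotomy (``$h(a,b)=a$ for all $b$ or $h(b,a)=a$ for all $b$'') extracted by applying compatibility to the pair $(a,c),(b,a)\in\cross a$, and the same orbit-counting bound of order $n^{-(n-1)}$ on $|H_{A,\cross a}|/|H_A|$. The only cosmetic difference is that you absorb the $q=1$ versus $q=2$ case distinction into a factor of $2$ via a union bound, where the paper treats the two cases separately; both yield the required limit.
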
  

\begin{proof}
  Let $\al{A}$ be a random finite model of $\MM$ of size
$>2$, and let $t$ be a minimal binary term for $\MM$.
As before, we will use the notation \eqref{eq-hq}--\eqref{eq-H_n_rho}
with $d=2$, and will apply
Lemma~\ref{lm-probab-zero} to prove our claim. 
Let $\RR_A:=\{\cross{a}:a\in A\}$ for every finite set $A$.
As we mentioned earlier, $(\RR_A:|A|<\omega)$ is a homogeneous
family of relations on finite sets.
Therefore the statement of Lemma~\ref{lm-cross} will follow
if we prove that \eqref{eq-probab-zero} holds for this choice of $\RR_A$.

  Let $A$ be an $n$-element set ($n>2$),
  choose $a\in A$, and let $\rho=\cross{a}$.
  Further, let $\Delta_\rho$ denote the family of all $2$-element subsets
  $D$ of $A$ with $a\in D$.   
  We claim that for any $h\in H_A$, if $\rho$ is compatible
  with $h$ --- i.e., $h\in H_{A,\rho}$ --- then
  \begin{multline}
    \label{eq-cross3}
    \qquad
    \text{either\ \ $h(a,b)=a$ for all
      $D=\{a,b\}\in\Delta_\rho$},\\ 
    \text{or \ \ $h(b,a)=a$ for all
      $D=\{a,b\}\in\Delta_\rho$}.
    \qquad
  \end{multline}
  Indeed, otherwise there would exist $b,c\in A\setminus\{a\}$
  such that $h(a,b)\not=a$ and $h(c,a)\not=a$.
  This would imply $(a,c),(b,a)\in \rho$, $(a,b),(c,a)\in A^{(2)}$,
  and
  $h\bigl((a,c),(b,a)\bigr)=\bigl(h(a,b),h(c,a)\bigr)\notin\rho$,
  contradicting our assumption that $\rho$ is compatible with $h$. 

  Next we want to show that
  \begin{equation}
    \label{eq-cross-ub}
    \frac{|H_{A,\rho}|}{|H_A|}\le \frac{q}{n^{|\Delta_\rho|}}\le\frac{1}{n^{n-2}}.
    \end{equation}
  As in the proof of Lemma~\ref{lm-aut},
  we will use the description in \eqref{eq-H_n_descr} for the members of
  $H_A$ (with $d=2$).
  To estimate $|H_{A,\rho}|$, let $H_{A,\rho}'$, $H_{A,\rho}''$ 
  denote the sets of all
  $h\in H_{A,\rho}$ which satisfy the first option or the second
  option in \eqref{eq-cross3}, respectively.
  Clearly, $H_{A,\rho}=H_{A,\rho}'\cup H_{A,\rho}''$.
  If $q=1$ (i.e., $|G_t|=2$), then the functions $h=h(x,y)$ and $h(y,x)$
  coincide, and $H_{A,\rho}'=H_{A,\rho}=H_{A,\rho}''$.
  If $q=2$ (i.e., $|G_t|=1$), then $h=h(x,y)\mapsto h(y,x)$ yields a bijection
  $H_{A,\rho}'\to H_{A,\rho}''$.
  Therefore in both cases we have that
   \begin{equation}
    \label{eq-half}
    |H_{A,\rho}|\le q|H_{A,\rho}'|.
  \end{equation}
 
  If $h\in H_{A,\rho}'$, then for each one of 
  the $|\Delta_\rho|$ choices of $D=\{a,b\}\in\Delta_\rho$,
  the constant value of $h$ on the $G_t$-orbit of $(a,b)\in D^{(2)}$
  is uniquely determined (namely, it is $a$).
  Hence, there is no choice for at least
  $|\Delta_\rho|$ of the function values of $h$ that for an arbitrary member
  of $H_A$ could be chosen independently.
  This implies that
  $|H_{A,\rho}'|\le |H_A|/ n^{|\Delta_\rho|}$.
  Combining this inequality with \eqref{eq-half} we get 
  the first inequality in \eqref{eq-cross-ub}.
  The second inequality in \eqref{eq-cross-ub}
  follows from $q\le n$ and
  $|\Delta_\rho|=n-1$.

  Now \eqref{eq-probab-zero} is easy to prove.
  Since 
  $|\RRn|=n$, we get from \eqref{eq-cross-ub} (for $n>2$) that
  \[
  (0\le)\,
  \sum_{\rho\in\RRn}\frac{|H_{[n],\rho}|}{|H_A|}
    \le \frac{n}{n^{n-2}},
    \]
  which implies that \eqref{eq-probab-zero} holds.
  This completes the proof of the lemma.
 \end{proof}

We can now prove Theorem~\ref{thm-idempr} by combining the results
of this section with the results of the preceding section.

\begin{proof}[Proof of Theorem~\ref{thm-idempr}]
  The equivalence of conditions~(b) and (c) is an immediate consequence of
  the definitions of minimal terms and the parameter $p_\MM(2)$ for $\MM$.

To prove the equivalence of conditions (a) and (b),   
let $\al{A}$ be a random finite model of $\MM$.
First we show that (b)~$\Rightarrow$~(a).
Assume $\MM$ satisfies condition~(b). Then $p_\MM(2)>2$, therefore
we get from
Theorem~\ref{thm-subalg}(2)--(4) (or the last item of
Corollary~\ref{cor-subalg})
that, with probability $1$, $\al{A}$
has no proper subalgebras of size greater than $1$.
By Lemma~\ref{lm-aut}, we have that, with probability $1$, $\al{A}$ has
no nontrivial automorphisms.
Finally, by assumption~(b),
Lemma~\ref{lm-cross} applies, and
yields that, with probability $1$,
no symmetric cross $\cross{a}$ ($a\in A$) is a compatible relation of
$\al{A}$.
Thus, $\al{A}$ satisfies the idemprimality criterion in
Theorem~\ref{thm-szendrei} with probability $1$. Hence (a) follows.

Conversely, to prove (a)~$\Rightarrow$~(b), assume that (b) fails,
that is, either $\MM$ has no binary minimal term, or $1\le p_\MM(2)\le 2$.
In the former case every $2$-element subset of $A$ is (the universe of)
a subalgebra of $\al{A}$ by Theorem~\ref{thm-subalg}(1),
while in the latter case
$\al{A}$ has a $2$-element subalgebra with positive probability by
Theorem~\ref{thm-subalg}(2). Thus, (a) fails in all these cases.
\end{proof}  

\section{Application to Some Familiar Maltsev Conditions}
\label{sec-appl}

We conclude the paper
by considering our results in the context of a few well-known strong
idempotent linear Maltsev conditions.

\subsection{Hagemann--Mitschke terms for congruence $k$-permutable
  varieties.}
\label{ssec-kperm}
The language $\LL$ for Hagemann--Mitschke terms~\cite{hagMit}
consists of $k-1$ ternary operation symbols, $q_1, q_2,\dots,q_{k-1}$
($k\ge2$),
and the set $\Sigma$ of identities consists of 
\begin{align*}
  x&\approx q_1(x,y,y),\\
 q_i(x,x,y) &\approx q_{i+1}(x,y,y), \quad\text{for $i=1,2,\dots,k-2$},\\
  q_{k-1}(x,x,y) &\approx y.
\end{align*}
The Maltsev condition ``there exist Hagemann--Mitschke terms
$q_1, q_2,\dots,q_{k-1}$'' characterizes those varieties with $k$-permuting
congruences.
The system $\MM=(\LL,\Sigma)$ has $2k-3$ essentially different minimal terms,
all binary:
\[
q_1(x,y,x),\ q_2(x,y,x),\ \dots,\ q_{k-1}(x,y,x),\ q_1(x,x,y),\
q_2(x,x,y),\ \dots,\ q_{k-2}(x,x,y).
\]
Therefore, if $k\geq 3$ then there are at least $3$ such terms, so by
Theorem~\ref{thm-idempr}, random finite models of $\MM$ are almost surely
idemprimal.  

If $k=2$, then this conclusion fails.
In the case $k=2$ there is only one Hagemann--Mitschke term, $q_1$, which is
a Maltsev term.
We will discuss this case in the next subsection.

\subsection{Maltsev term for congruence ($2$-)permutable varieties.}
\label{ssec-perm}
The language $\LL$ for a Maltsev term~\cite{maltsev}
consists of a single ternary operation symbols, $f$,
and the set $\Sigma$ of identities consists of  
\begin{equation*}
  x\approx f(x,y,y)\qquad\text{and}\qquad f(x,x,y) \approx y.
\end{equation*}
Then $\MM=(\LL,\Sigma)$ has a single minimal term, namely
$f(x,y,x)$.
Since $\Sigma \nvDash f(x,y,x) \approx f(y,x,y)$, we obtain $p_{\MM}(2)=2$.
From Lemmas~\ref{lm-aut} and~\ref{lm-cross}, we learn that
random finite models of $\MM$ have, almost surely,
no nontrivial automorphisms or compatible crosses.
However, according to Theorem~\ref{thm-subalg}(2),
a random finite model of \MM will have a $2$-element
subalgebra with probability $1-e^{-2}$.
Thus by Theorem~\ref{thm-szendrei}, a random finite model
of $\MM$ will be idemprimal with probability $e^{-2}\approx 0.14$.

\subsection{Ternary minority term.}
A ternary minority term is a special kind of Maltsev term.
The existence of a ternary minority term is perhaps not of interest as a
Maltsev condition, but it does provide an interesting case study for
random models. The language describing a minority term is $\LL=\{f\}$,
and the set of identities is 
\[
\Sigma_1=\{f(x,y,y)\approx x,\ f(y,y,x) \approx x,\ f(y,x,y)\approx x\}.
\]
Clearly, $\MM_1= (\LL,\Sigma_1)$ has no binary minimal terms, 
so $d_{\MM_1}=3$ and the only minimal term for $\MM_1$ is $f(x,y,z)$
(up to renaming and permuting variables; cf.\ Theorem~\ref{thm-min-t}(2)).
The symmetry group of $f(x,y,z)$ is trivial, so
$p_{\MM_1}(3) = 6 > d_{\MM_1}$.
Theorem~\ref{thm-subalg}(1)--(4) tells us that every $2$-element subset of
every model of $\MM_1$ will be a subalgebra, but a random finite model
of $\MM_1$ will, almost surely, have no proper subalgebras
of size~$3$ or larger.
In particular, we see that no finite model of $\MM_1$ will be idemprimal.

Now set
\[
\Sigma_2= \Sigma_1\cup \{f(x,y,z) \approx f(y,z,x)\}.
\]
In $\MM_2=(\LL,\Sigma_2)$, the symmetry
group of $f(x,y,z)$ has order~$3$, thus $p_{\MM_2}(3)=2$. Then
Theorem~\ref{thm-subalg}(2)--(4)
implies that a random finite model of
$\MM_2$ almost surely has a $3$-element subalgebra, but no larger
proper subalgebra. 

Finally, with
\[
\Sigma_3=\Sigma_2\cup \{f(x,y,z)\approx f(y,x,z)\},
\]
the symmetry group of $f(x,y,z)$
is the full permutation group $S_{\{x,y,z\}}$.
At that point we find ourselves in the territory of
Problem~\ref{prb-1}, because $p_{\MM_3}(3)=1$
and there is no linear term for $\MM_3$ with exactly $4$ essential variables.
We know from Theorem~\ref{thm-subalg}(2) and~(4) that
a random finite model of
$\MM_3$ almost surely has a $3$-element subalgebra and no proper subalgebra
of size $5$ or larger, but the probability
that it has a $4$-element subalgebra is unclear.

\subsection{Near unanimity term.}
Let $k\geq 3$, and let $\LL=\{g\}$ where $g$ is a $k$-ary
operation symbol. The set of
\emph{$k$-ary near unanimity identities} is
\begin{equation*}
  \Sigma_k=\Bigl\{g(x,x,\dots,\overset{i^{\text{th}}}{y},x,\dots,x) \approx x :
  i=1,\dots,k\Bigr\}
\end{equation*}
where the lone $y$ appears in the $i^{\text{th}}$ position.  When $k=3$, $g$ is called a
\emph{majority term}.
In this case $g$ is the only minimal term for the system
$\MM_3=(\LL, \Sigma_3)$, so
by Theorem~\ref{thm-subalg}(1), every $2$-element subset of
every model of $\MM_3$ will be a subalgebra.
It follows that no finite model of $\MM_3$ will be idemprimal.

However, for $k>3$ there are $2^k-2k-2$ essentially different binary terms.
Thus by Theorem~\ref{thm-idempr}, a random finite model of
$\MM_k=(\LL,\Sigma_k)$ will almost surely be idemprimal.

\subsection{Further examples.}
In Tables~\ref{fig:idempr1}--\ref{fig:idempr2} we list the answers to the
question ``Is a random finite model of $\MM$ almost surely idemprimal?''
for many other systems $\MM$ which describe familiar strong idempotent
linear Maltsev conditions.
\begin{table}
  \centering
 \renewcommand{\arraystretch}{1.25}
  \begin{tabular}{|l|c|c|}
    \hline
    \multicolumn{1}{|c|}{System $\MM$} &
                  \multicolumn{2}{c|}{Is a random finite model of $\MM$}\\
    \multicolumn{1}{|c|}{for the Maltsev condition}
                  & \multicolumn{2}{c|}{almost surely idemprimal?}\\
    \cline{2-3}
    & YES, if & NO, if\\
    \hline \hline                 
    Hagemann--Mitschke terms $q_1,\dots,q_{k-1}$ \cite{hagMit}  &
                      $k\ge3$ & $k=2$\\                   
    \hfill for congruence $k$-permutability & & [Maltsev term]\\
    \hline    
    J\'onsson terms $t_0,\dots,t_k$ \cite{jonsson}  & $k\ge4$ & $k=2,3$\\  
    \hfill for congruence distributivity & & [$k=2$: $\sim$majority term]\\
    \hline
    Day terms $m_0,\dots,m_k$ \cite{day}  & $k\ge2$ & ---\\  
    \hfill for congruence modularity & &  \\
    \hline
    Gumm terms $d_0,\dots,d_k,p$ \cite{gumm} & $k\ge1$ & $k=0$\\
    \hfill for congruence modularity & & [$\sim$Maltsev term] \\ 
    \hline
    Terms $d_0,\dots,d_k$ \cite{hobbyMc,kearnesKiss} & $k\ge4$ & $k=2,3$\\
    \hfill for congruence join-semidistributivity &
          & [$k=2$: $\sim\frac{2}{3}$-minority term] \\ 
    \hline
    $k$-ary near unanimity term \cite{huhn,bakerPixley} & $k\ge4$ & $k=3$\\    
    & & [majority term]\\
    \hline
    $k$-cube term (arity: $2^k-1$) \cite{bimmvw} & $k\ge3$ & $k=2$\\
    & & [$\sim$Maltsev term] \\ 
    \hline
    $k$-edge term (arity: $k+1$) \cite{bimmvw} & $k\ge3$ & $k=2$\\
    & & [$\sim$Maltsev term] \\ 
    \hline
    $(m,n)$-parallelogram term & $m,n\ge1$ & ---\\
    \hfill(arity: $m+n+3$)\cite{parallelogram} & & \\
    \hline
    $k$-ary weak near unanimity term \cite{mckMaroti} & $k\ge4$ & $k=3$\\ 
    \hline
    $k$-ary cyclic term \cite{bkmmn} & $k\ge4$ & $k=2,3$\\
    \hline
  \end{tabular}
  \smallskip
  \caption{Idemprimality for random finite models of
    some familiar strong idempotent linear Maltsev conditions
    (with parameters)}
  \label{fig:idempr1}
\end{table}
\begin{table}
  \centering
 \renewcommand{\arraystretch}{1.25}
  \begin{tabular}{|l|c|}
    \hline
    \multicolumn{1}{|c|}{System $\MM$} &
                      Is a random finite model of $\MM$\\
    \multicolumn{1}{|c|}{for the Maltsev condition} &
                      almost surely idemprimal?\\
    \hline
    \hline                 
    $2/3$ minority term \cite{pixley2} & NO \\
    \hfill for characterizing arithmetical varieties & \\
    \hline
    Maltsev term and majority term \cite{pixley1} & NO \\
    \hfill for characterizing arithmetical varieties & \\
    \hline
    $6$-ary Siggers term \cite{siggers} & YES \\
    \hline
    $4$-ary Siggers term \cite{kmm} & YES \\
    \hline
    Ol\v{s}\'ak's $6$-ary weak $3$-cube term \cite{olsak} & YES \\
    \hline
  \end{tabular}
  \smallskip
  \caption{Idemprimality for random finite models of
    some familiar strong idempotent linear Maltsev conditions}
    \label{fig:idempr2}
\end{table}
Since for some of the Maltsev conditions in
Tables~\ref{fig:idempr1}--\ref{fig:idempr2}
the literature uses several slightly different descriptions $\MM$,
we followed the original papers, as indicated.

This introduces a minor inconsistency which has no effect on the random
models, as we now explain, using Hagemann--Mitschke terms as an example.
Instead of the system $\MM$ displayed in subsection~\ref{ssec-kperm},
Hagemann--Mitschke terms are often described by the system
$\MM'=(\LL',\Sigma')$ where $\LL'$ consists of the symbols $q_0,\dots,q_n$
and $\Sigma'$ consists of the identities
\begin{align*}
  x&\approx q_0(x,y,z),\\
 q_i(x,x,y) &\approx q_{i+1}(x,y,y), \quad\text{for $i=0,1,2,\dots,k-1$},\\
  q_k(x,y,z) &\approx z.
\end{align*}
The only difference between $\MM$ and $\MM'$ is that the second one has
two new symbols, $q_0$ and $q_k$, which are inessential, because
they are $\Sigma'$-equivalent to the variables $x$ and $z$, respectively,
and upon eliminating $q_0$ and $q_k$ by replacing them with those variables,
$\Sigma'$ becomes the same set of identities as $\Sigma$.
This implies that $\MM$ and $\MM'$ define equivalent varieties; in fact,
there is a one-to-one correspondence $\al{A}\mapsto\al{A}'$
between the models $\al{A}$ of $\MM$ and the models $\al{A}'$ of $\MM'$
such that the $\MM$-family of $\al{A}$ coincides with the $\MM'$-family
of $\al{A}'$. Therefore, the probability of every abstract property is the
same for the finite models of $\MM$ as for the finite models of $\MM'$.

In Table~\ref{fig:idempr1} the papers cited for
J\'onsson terms, Day terms, Gumm terms, and join semidistributivity terms
use the approach of including
inessential symbols in the language, namely $t_0,t_k$; $m_0,m_k$; $d_0$; and
$d_0,d_k$. 
Therefore, the claim that the
system for J\'onsson terms for $k=2$ reduces to the system
for a majority term $t_1$ is true only after eliminating the
inessential symbols $t_0,t_2$. This is indicated by the symbol $\sim$
in the last column of the table. The situation is similar for Gumm terms
and join semidistributivity terms.
For $k$-cube and $k$-edge terms, the case $k=2$ yields a Maltsev term
up to a possible permutation of variables only. This is what $\sim$ indicates
in the last column for those cases.

More interestingly, Table~\ref{fig:idempr1} also shows that it can happen that 
two different systems $\MM=(\LL,\Sigma)$ and
$\MM'=(\LL',\Sigma')$ satisfying our Global Assumption~\ref{global-assumption}
determine equivalent Maltsev conditions, but the question
``Are the random finite models almost surely idemprimal?''
for $\MM$ and $\MM'$ have different answers.
For example, for any fixed integer $k\ge2$,
let $\MM_k$ be the system
of identities for a $k$-cube term and let $\MM'_k$ be the system of identities
for a $(1,k-1)$-parallelogram term.
It was proved in \cite{parallelogram} that a variety has a $k$-cube term
if and only if it has a $(1,k-1)$-parallelogram term, so $\MM_k$ and $\MM'_k$
describe equivalent Maltsev conditions.
However, as Table~\ref{fig:idempr1} and the conclusion of 
subsection~\ref{ssec-perm} above indicate, in the case $k=2$
we have that a random
finite model of $\MM'_2$ is almost surely idemprimal, while   
a random finite model of $\MM_2$ is idemprimal only with probability
$e^{-2}$.

Now let us consider a pair of examples from Table~\ref{fig:idempr2}:
let $\MM$ be the system of identities for a $2/3$-minority term, and let
$\MM'$ be the system of identities in the language $\{f,d\}$ saying that
$f$ is a Maltsev term and $d$ is a majority term.
The corresponding Maltsev conditions are equivalent; both of them characterize
arithmetical varieties, by \cite{pixley1, pixley2}.
The answers to the question 
``Are the random finite models almost surely idemprimal?'' are also the same
for $\MM$ and $\MM'$. However, there are  
essential differences between the random finite models of $\MM$ and $\MM'$. 
By Theorem~\ref{thm-subalg}(1)--(2),
for the finite models of $\MM$ we have that
all $2$-element subsets are subalgebras, 
while for the finite models of $\MM'$,
there is a positive probability, namely $e^{-2}$, that a random
finite model has no $2$-element subalgebras.
The latter fact follows from our result in subsection~\ref{ssec-perm},
because the majority term $d$ makes no contribution to the family of
binary minimal terms for $\MM'$.

\subsection{Answers to our questions in the Introduction.}
In the first two paragraphs of the Introduction we mentioned several
questions involving specific Maltsev conditions, which have the following form
(in the interpretation discussed later on in the Introduction):
``For two given finite systems $\MM_1$ and $\MM_2$ of idempotent
linear identities (in a finite language), what is the probability that
a random finite model of $\MM_1$ satisfies the Maltsev condition
$\CC_{\MM_2}$ described by $\MM_2$?''

Our first question was this:
What is the probability that a random finite model of
the Hagemann--Mitschke identities for congruence $3$-permutability
(see subsection~\ref{ssec-kperm}) has a Maltsev term?
We discussed in subsection~\ref{ssec-kperm} that a random finite model of
the Hagemann--Mitschke identities for $k=3$ is almost surely idemprimal,
and therefore, by Corollary~\ref{cor-models_exist},
almost surely satisfies every strong idempotent linear Maltsev condition.
In particular, it follows that a random finite model of
the Hagemann--Mitschke identities for $k=3$ almost surely
has a Maltsev term.

We also asked: Will a random
finite algebra lying in a congruence
semidistributive variety almost surely generate one
that is congruence distributive? 
Since a variety is congruence semidistributive (i.e., both congruence
meet- and join-semidistributive) if and only if it is congruence
join-semidistributive, our interpretation of this question is the following: 
Will a random
finite model of the system $\SD_k(\vee)$ of identities
for join-semidistributivity terms for some $k\ge2$ almost surely
have J\'onsson terms for some $\ell\ge2$? (For the explicit identities, see the
papers cited in Table~\ref{fig:idempr1}.)
According to Table~\ref{fig:idempr1}, if $k\ge4$, then a random finite
model of $\SD_k(\vee)$ is almost surely idemprimal,
and therefore by the same argument as in the preceding paragraph,
it almost surely has J\'onsson terms for all $\ell\ge2$.
If $k=3$, then $\SD_3(\vee)$ and the system of identities for J\'onsson terms
for $\ell=3$ differ only by notation;
namely, one can be translated into the other by
the definition $t_i(x,y,z):=d_{3-i}(z,y,x)$~($0\le i\le 3$).
Finally, if $k=2$, then $\SD_k(\vee)$ implies that $d_1(x,y,z)$ is a
$\frac{2}{3}$-minority term. Hence,
$d_1(x,d_1(x,y,z),z)$ is a majority term, and therefore
J\'onsson terms for $\ell=2$ exist.
In summary, we see that for every $k\ge2$,
a random finite model of $\SD_k(\vee)$ will almost surely have J\'onsson terms.

Another question was the following:
If a random
finite algebra has a Maltsev term, will it have
(with probability 1) a majority term?
Letting $\MM$ denote the system of identities for a Maltsev term
(see subsection~\ref{ssec-perm}), our precise interpretation of the question
is the following:
Will a random
finite model of $\MM$ have a majority term with probability $1$?
We saw that a random finite model $\al{A}$ of $\MM$
has a $2$-element subalgebra with probability $1-e^{-2}$,
and is idemprimal with probability $e^{-2}$.
For $\al{A}$ to have a majority term, every 
$2$-element subalgebra of $\al{A}$ must have a majority term.
For this it is necessary that for every $2$-element subset
$B$ of $A$, $B$ is not a minority subalgebra of $\al{A}$, i.e.,
$B$ is not a subalgebra where
$f^{\al{B}}$ is the minority operation on $B$.
It is easy to check that for an $n$-element random model of $\MM$ and for
fixed $2$-element subset $B$ of $A$,
\[
\PrA\bigl(\text{$B$ is the universe of a minority subalgebra of $\al{A}$}\bigr)
=1/n^2.
\]
Therefore the same argument as in the proof of Theorem~\ref{thm-subalg}(2)
yields that
\[
\PrA\bigl(\text{$\al{A}$ has no $2$-element minority subalgebra}\bigr)
=\left(1-\frac{1}{n^2}\right)^{\binom{n}{2}},
\]
so the probability that a random finite model of $\MM$
has no $2$-element minority subalgebra is $e^{-1/2}$.
Hence, the probability that a random finite model of $\MM$
has a $2$-element minority subalgebra is $1-e^{-1/2}$.
This implies that a random finite model of $\MM$ will fail to have a majority
term with probability at least $1-e^{-1/2}\approx .39$.

\appendix
\section{Proof of Lemma~\ref{lm-murskii}}
\label{sec-appendix}

Let $\zeta_n(k)$ denote the $k$-th summand on the left hand side in
  \eqref{eq-murskii}, that is,
  \[
  \zeta_n(k):=\binom{n}{k}\left(\frac{k}{n}\right)^{\binom{k}{2}}.
  \]
Or goal is to prove that
\begin{equation}
  \label{eq-murskii-new}
\lim_{n\to\infty} \sum_{k=4}^{n-1}\zeta_n(k)=0.
\end{equation}

  \begin{clm}
  \label{clm-murskii1}
For arbitrary constants $0<u<v<1$ we have
$\displaystyle
\lim_{n\to\infty}\sum_{un\le k\le vn}\zeta_n(k)=0.
$
\end{clm}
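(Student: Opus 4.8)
The plan is to exploit the fact that, once $k$ is bounded away from $n$ by the \emph{constant} factor $v<1$, the term $(k/n)^{\binom{k}{2}}$ is already doubly-exponentially small in $n$, so the crudest available estimates suffice. For $un\le k\le vn$ we have $k/n\le v<1$; since $x\mapsto v^{x}$ is decreasing and $k\mapsto\binom{k}{2}$ is nondecreasing for $k\ge1$,
\[
\left(\frac{k}{n}\right)^{\binom{k}{2}}\le v^{\binom{k}{2}}\le v^{\binom{\lceil un\rceil}{2}},
\]
and hence $\zeta_n(k)\le\binom{n}{k}\,v^{\binom{\lceil un\rceil}{2}}\le 2^{n}\,v^{\binom{\lceil un\rceil}{2}}$ for every $k$ in the range.

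Next I would bound the exponent from below: for all $n\ge 2/u$ we have $un-1\ge un/2$, so $\binom{\lceil un\rceil}{2}\ge\binom{un}{2}=\tfrac{un(un-1)}{2}\ge\tfrac{u^{2}n^{2}}{4}$. The number of integers $k$ with $un\le k\le vn$ is at most $n$, so for all sufficiently large $n$,
\[
0\le\sum_{un\le k\le vn}\zeta_n(k)\le n\cdot 2^{n}\cdot v^{u^{2}n^{2}/4}
=\exp\!\Bigl(\ln n+(\ln 2)\,n+\tfrac{u^{2}(\ln v)}{4}\,n^{2}\Bigr).
\]
Since $\ln v<0$, the coefficient of $n^{2}$ is negative, so the exponent tends to $-\infty$ and the right-hand side tends to $0$. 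Therefore $\sum_{un\le k\le vn}\zeta_n(k)\to 0$, which is the claim.

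I do not expect any real obstacle here: the content of the claim is precisely that keeping $k/n$ away from $1$ converts $(k/n)^{\binom{k}{2}}$ into an $e^{-\Theta(n^{2})}$ factor that overwhelms both the factor $\binom{n}{k}\le 2^{n}$ and the number $\le n$ of summands. The harder work in proving Lemma~\ref{lm-murskii} as a whole---toward which this claim is only a first step---will lie in the two tails of the sum in \eqref{eq-murskii-new}: the summands with $k$ near the lower limit $4$ and those with $k$ near $n-1$, where $k/n$ is no longer separated from $0$ or from $1$ and one must replace $2^{n}$ by a sharper, Stirling-type estimate of $\binom{n}{k}$.
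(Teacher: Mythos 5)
Your proof is correct and follows essentially the same route as the paper's: bound $\binom{n}{k}$ by $2^n$, bound $(k/n)^{\binom{k}{2}}$ by $v^{\binom{k}{2}}\le v^{\Theta(u^2n^2)}$, and multiply by the at most $n$ summands, so that the negative $n^2$ term in the exponent dominates. The only cosmetic difference is that the paper packages the per-term bound as $\bigl(2(\sqrt{v})^{u(un-1)}\bigr)^n\le(2/3)^n$ for large $n$, whereas you keep the bound in exponential form and compare growth rates directly.
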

    
\begin{proof}[Proof of Claim~\ref{clm-murskii1}]
  For $un\le k\le vn$,
  \[
  \zeta_n(k)
  =\binom{n}{k}\left(\frac{k}{n}\right)^{\binom{k}{2}}
  \le 2^n v^{un(un-1)/2}
  =\Bigl(2(\sqrt{v})^{u(un-1)}\Bigr)^n.
  \]
  Since $(\sqrt{v})^{u(un-1)}<\frac{1}{3}$ for large enough $n$, we get that
  \[
  \sum_{un\le k\le vn}\zeta_n(k)\le n\Bigl(\frac{2}{3}\Bigr)^n \to 0
  \qquad\text{as $n\to\infty$.}
  \]
  
\kern-15pt
\qedsymbdiamond  
\end{proof}

To establish \eqref{eq-murskii-new},
it remains to find $u,v$ with $0<u<v<1$ such that
\[
\sum_{4\le k< un}\zeta_n(k)\to 0
\quad\text{and}\quad
\sum_{vn< k< n}\zeta_n(k)\to 0
\quad\text{as $n\to\infty$.}
\]
This will be accomplished in Claims~\ref{clm-murskii2} and
\ref{clm-murskii3} below.

\begin{clm}
  \label{clm-murskii2}
  For $u=\frac{1}{2}$ we have that
$\displaystyle
  \lim_{n\to\infty}\sum_{4\le k< un}\zeta_n(k)= 0.
$
\end{clm}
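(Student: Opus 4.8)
The plan is to split the range $4\le k<un=n/2$ into a bounded initial block $4\le k<K$, where $K$ is a large constant to be chosen only at the very end, and a tail $K\le k<n/2$, bounding $\zeta_n(k)$ by a different elementary estimate on each piece and then letting $K\to\infty$. Throughout I would use the two standard binomial inequalities $\binom nk\le n^k/k!$ and $\binom nk\le (en/k)^k$.

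For the tail I would use the second bound. For every $k$ with $4\le k<n/2$,
\[
\zeta_n(k)=\binom nk\Bigl(\frac kn\Bigr)^{\binom k2}
\le\Bigl(\frac{en}{k}\Bigr)^{k}\Bigl(\frac kn\Bigr)^{k(k-1)/2}
=\Bigl(e\,(k/n)^{(k-3)/2}\Bigr)^{k}.
\]
Since $k/n<1/2$ and the exponent $(k-3)/2$ is nonnegative, for $k\ge K$ the base is at most $\theta_K:=e\,2^{-(K-3)/2}$, so $\zeta_n(k)\le\theta_K^{\,k}$. As $\theta_K\to0$ when $K\to\infty$, I would fix $K$ so large that $\theta_K<\tfrac12$; then $\sum_{K\le k<n/2}\zeta_n(k)\le\sum_{k\ge K}2^{-k}=2^{1-K}$, and this bound is uniform in $n$.

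For the initial block $4\le k<K$ I would use the first bound: $\zeta_n(k)\le\frac{k^{k(k-1)/2}}{k!}\,n^{k(3-k)/2}$, and since $k(3-k)/2\le-2$ for every $k\ge4$, each of these finitely many summands is $O(n^{-2})$; hence for each fixed $K$ we get $\sum_{4\le k<K}\zeta_n(k)\to0$ as $n\to\infty$. Combining the two pieces gives $\limsup_{n\to\infty}\sum_{4\le k<n/2}\zeta_n(k)\le 2^{1-K}$ for every large $K$, which forces the limit to be $0$, as claimed.

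The only point requiring care — and it is bookkeeping rather than a genuine obstacle — is the order of the two limits: the tail bound $2^{1-K}$ is a positive constant for fixed $K$, so $K$ cannot be chosen once and for all but must be sent to infinity only after $n$. Everything else reduces to the two binomial inequalities and the trivial inequality $k(3-k)/2\le-2$ for $k\ge4$.
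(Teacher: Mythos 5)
Your argument is correct, but it takes a genuinely different route from the paper's. The paper shows that the ratio $\zeta_n(k+1)/\zeta_n(k)$ is at most $1$ throughout the range $4\le k+1\le n/2$ (via $(1+1/k)^k\le e<4$), so the sequence is decreasing there, and then bounds the whole sum by the number of terms times the first term, $un\cdot\zeta_n(4)\le un\cdot\frac{4^6}{4!}\,n^{-2}\to0$. You instead split the range at a large constant $K$: on the finite initial block you use $\binom nk\le n^k/k!$ together with $k(3-k)/2\le-2$ to get $O(n^{-2})$ per term, and on the tail you use $\binom nk\le(en/k)^k$ to dominate $\zeta_n(k)$ by a geometric series with ratio $\theta_K=e\,2^{-(K-3)/2}<\tfrac12$, uniformly in $n$, and then send $K\to\infty$ after $n$. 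Both computations check out (in particular the algebra $\bigl(en/k\bigr)^k(k/n)^{k(k-1)/2}=\bigl(e(k/n)^{(k-3)/2}\bigr)^k$ and the monotonicity in $k$ of the resulting base are right, and you correctly flag the order-of-limits issue). The paper's monotonicity argument is shorter and gives the sharper bound $O(n^{-1})$ for the whole partial sum; your decomposition avoids the ratio computation entirely and is the more robust template — it would survive, for instance, replacing $\binom k2$ in the exponent by any function growing at least linearly in $k$, where monotonicity of $\zeta_n(k)$ might be harder to establish.
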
  

\begin{proof}[Proof of Claim~\ref{clm-murskii2}]
  The following estimates show that the sequence $\zeta_n(k)$ ($k=4,5,\dots$)
  is decreasing for $4\le k\le \frac{1}{2}n$:
  \begin{multline*}
    \frac{\zeta_n(k+1)}{\zeta_n(k)}
    =\frac{n-k}{k+1}\cdot\left(\frac{k+1}{n}\right)^k
    \cdot\left(\frac{k+1}{k}\right)^{\binom{k}{2}}\\
    =\frac{n-k}{n}\cdot\left(\frac{k+1}{n}\right)^{k-1}
    \cdot\left(\Bigl(1+\frac{1}{k}\Bigr)^k\right)^{\frac{k-1}{2}}
    \le\left(\frac{k+1}{n}\right)^{k-1}\cdot e^{\frac{k-1}{2}}\\
    <\left(\frac{k+1}{n}\right)^{k-1}\cdot 2^{k-1}\le 1
    \qquad\text{if}\quad k+1\le\frac{n}{2}.
  \end{multline*}
  The first term of the sequence is
  \[
  \zeta_n(4)=\binom{n}{4}\left(\frac{4}{n}\right)^6
  \le\frac{4^6}{4!}\cdot\frac{1}{n^2},
  \]
  hence
  \[
  \sum_{4\le k\le un}\zeta_n(k)\le un\cdot \frac{4^6}{4!}\cdot\frac{1}{n^2}
  \to 0
  \qquad\text{as $n\to\infty$.}
  \]

\kern-15pt  
\qedsymbdiamond  
\end{proof}

\begin{clm}
  \label{clm-murskii3}
  There exists a constant $v$ with $\frac{1}{2}<v<1$ such that
  \[
  \lim_{n\to\infty}\sum_{vn< k< n}\zeta_n(k)= 0.
  \]
\end{clm}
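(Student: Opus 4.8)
The plan is to treat this tail of large $k$ by the substitution $j:=n-k$, so that the range $vn<k<n$ becomes $1\le j<(1-v)n$; in this regime the exponent $\binom{k}{2}=\binom{n-j}{2}$ is of order $n^2$, which forces $\zeta_n(k)$ to decay like $e^{-cnj}$ and makes the sum behave like a geometric series whose ratio tends to $0$. I would take $v=\tfrac34$ (any $v\in[\tfrac34,1)$ works), so that throughout the sum $1\le j<\tfrac14 n$, and rewrite
\[
\zeta_n(k)=\binom{n}{n-j}\Bigl(1-\tfrac{j}{n}\Bigr)^{\binom{n-j}{2}}=\binom{n}{j}\Bigl(1-\tfrac{j}{n}\Bigr)^{\binom{n-j}{2}}.
\]

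Next I would bound the two factors separately. For the binomial coefficient the crude estimate $\binom{n}{j}\le n^{j}$ suffices. For the power, since $j<\tfrac14 n$ we have $n-j>\tfrac34 n$ and, for $n\ge4$, also $n-j-1\ge\tfrac12 n$, whence $\binom{n-j}{2}=\tfrac12(n-j)(n-j-1)\ge\tfrac{3}{16}n^{2}$; combining this with $(1-\tfrac{j}{n})^{n/j}\le e^{-1}$ gives
\[
\Bigl(1-\tfrac{j}{n}\Bigr)^{\binom{n-j}{2}}\le\Bigl(1-\tfrac{j}{n}\Bigr)^{\frac{3}{16}n^{2}}=\Bigl(\bigl(1-\tfrac{j}{n}\bigr)^{n/j}\Bigr)^{\frac{3}{16}nj}\le e^{-\frac{3}{16}nj}.
\]
Putting these together yields $\zeta_n(k)\le\bigl(n\,e^{-\frac{3}{16}n}\bigr)^{j}$ for all $n\ge4$ and all $1\le j<\tfrac14 n$.

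Finally, since $n\,e^{-\frac{3}{16}n}\to0$, for all sufficiently large $n$ we have $n\,e^{-\frac{3}{16}n}\le\tfrac12$, and then
\[
\sum_{vn<k<n}\zeta_n(k)=\sum_{1\le j<\frac14 n}\zeta_n(k)\le\sum_{j=1}^{\infty}\bigl(n\,e^{-\frac{3}{16}n}\bigr)^{j}=\frac{n\,e^{-\frac{3}{16}n}}{1-n\,e^{-\frac{3}{16}n}}\to0\qquad\text{as $n\to\infty$,}
\]
which is the assertion of the claim. No real obstacle arises here: the one point needing care is to choose $v$ so that, uniformly over the relevant range of $j$, the exponent $\binom{n-j}{2}$ swamps $\log\binom{n}{j}$, and this is exactly what $j<\tfrac14 n$ guarantees. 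Together with Claim~\ref{clm-murskii1} (applied with $u=\tfrac12$, $v=\tfrac34$) and Claim~\ref{clm-murskii2} (applied with $u=\tfrac12$), splitting $\sum_{k=4}^{n-1}$ at $k=\tfrac12 n$ and $k=\tfrac34 n$ then establishes \eqref{eq-murskii-new}, and hence Lemma~\ref{lm-murskii}.
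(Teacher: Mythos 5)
Your proof is correct, and it takes a genuinely different (and in my view cleaner) route than the paper. The paper bounds $\binom{n}{k}$ for $k\ge vn$ by $\binom{n}{\lfloor vn\rfloor}$ and then invokes a Stirling-type entropy estimate $\binom{n}{\ell}<\sqrt{n}\,\bigl((v-\tfrac1n)^{v}(1-v)^{1-v}(1-v)^{1/n}\bigr)^{-n}$, combines it with $(\tfrac{k}{n})^{\binom{k}{2}}<e^{-v^2n/3}$, and then must verify numerically that a specific choice ($v=.95$) makes the resulting base $w=\bigl(v-\tfrac1{16}\bigr)^{v}(1-v)^{1-v}e^{v^2/3}$ exceed $1$; each term is bounded uniformly and multiplied by the number of terms. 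You instead reflect via $j=n-k$ and use only the crude bound $\binom{n}{j}\le n^{j}$, which is affordable because the exponent $\binom{n-j}{2}\ge\tfrac{3}{16}n^2$ delivers a factor $e^{-3nj/16}$ that is \emph{geometric in $j$}; the tail then collapses to a geometric series with ratio $n e^{-3n/16}\to0$. Your approach buys an explicit, easily checkable constant $v=\tfrac34$ and avoids both the entropy bound and the numerical verification; the paper's approach is the more standard large-deviation-style estimate and would also work for ranges of $k$ where the reflection trick gives a weaker exponent. All the individual steps in your argument check out: $n-j-1\ge\tfrac12 n$ does hold for $n\ge4$ when $j<\tfrac14 n$, the inequality $(1-x)^{1/x}\le e^{-1}$ is valid for all $x\in(0,1)$, and the final splitting of $\sum_{k=4}^{n-1}$ at $k=\tfrac12 n$ and $k=\tfrac34 n$ correctly assembles Lemma~\ref{lm-murskii} from the three claims.
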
  

\begin{proof}[Proof of Claim~\ref{clm-murskii3}]
  As in the proof of \cite[Lemma 6.22C]{bergman}, letting
  $\ell:=\lfloor vn\rfloor$ we have that
  \[
  \binom{n}{\ell}<\sqrt{n}\left(\Bigl(v-\frac{1}{n}\Bigr)^{v}
  \Bigl(1-v\Bigr)^{1-v}\Bigl(1-v\Bigr)^{\frac{1}{n}}\right)^{-n}.
  \]
  Now let $k$ be such that $(\frac{n}{2}<)\,vn<k<n$.
  We may assume without loss of generality that $n>8$, so $k\ge4$, and hence
  $\binom{k}{2}\ge\frac{k^2}{3}$.
  Thus,
  \[
  \left(\frac{k}{n}\right)^{\binom{k}{2}}
  \le\left(\frac{n-1}{n}\right)^{\binom{k}{2}}
  \le\left(\frac{n-1}{n}\right)^{\frac{k^2}{3}}
  \le\left(\frac{n-1}{n}\right)^{\frac{v^2n^2}{3}}
  =\left(\Bigl(1-\frac{1}{n}\Bigr)^{n}\right)^{\frac{v^2n}{3}}
  <\left(\frac{1}{e}\right)^{\frac{v^2n}{3}}.  
  \]
  Since $k\ge vn\ge\ell$ and $vn>\frac{n}{2}$, it follows that
  $\binom{n}{k}\le\binom{n}{\ell}$. Therefore, by combining the previous
  estimates we obtain that
  \begin{align*}
  \zeta_n(k)=\binom{n}{k}\left(\frac{k}{n}\right)^{\binom{k}{2}}
  & {}\le \sqrt{n}\left(\Bigl(v-\frac{1}{n}\Bigr)^{v}
  \Bigl(1-v\Bigr)^{1-v}\Bigl(1-v\Bigr)^{\frac{1}{n}}\right)^{-n}
  \left(\frac{1}{e}\right)^{\frac{v^2n}{3}}\\
  & {}= \sqrt{n}\left(
  \frac{1}{\Bigl(v-\frac{1}{n}\Bigr)^{v}
    \Bigl(1-v\Bigr)^{1-v}e^{\frac{v^2}{3}}}\right)^n(1-v)^{-1}\\
  & {}\le \sqrt{n}\left(
  \frac{1}{\Bigl(v-\frac{1}{16}\Bigr)^{v}
    \Bigl(1-v\Bigr)^{1-v}e^{\frac{v^2}{3}}}\right)^n(1-v)^{-1}\\
  & \qquad\qquad\qquad\qquad\qquad\qquad\qquad\qquad\qquad
  \text{if $n\ge16$.}
  \end{align*}
  Let $w:=\Bigl(v-\frac{1}{16}\Bigr)^{v}\Bigl(1-v\Bigr)^{1-v}e^{\frac{v^2}{3}}$.
  It can be checked that there exists $v$ with $\frac{1}{2}<v<1$ such that
  $w>1$; for example, $v=.95$ works.
  Thus,
  \[
  \sum_{vn<k<n}\zeta_n(k)
  \le\sum_{vn<k<n}\frac{\sqrt{n}}{1-v}\left(\frac{1}{w}\right)^n\to0
  \qquad\text{as $n\to\infty$.}
  \]
  
\kern-15pt  
\qedsymbdiamond  
\end{proof}

This completes the proof of Lemma~\ref{lm-murskii}.

\bibliographystyle{plain}

\end{document}